\documentclass{amsart}

\usepackage{tikz-cd}

\usepackage{enumerate}

\usepackage{macros}
\usepackage[
final
]{showlabels}

\usepackage[T1]{fontenc}
\usepackage[
  style=alphabetic,
  citestyle=alphabetic,
  maxnames=100, 
  doi=false,
  isbn = false,
  giveninits = true, 
]{biblatex}
\DeclareNameAlias{default}{family-given}

\usepackage[
  pdfauthor={Joshua Mundinger, Shubhankar Sahai},
  pdftitle={Tannakian reconstruction in derived algebraic geometry},
]{hyperref}

\title{
Tannakian reconstruction in derived algebraic geometry
}
\begin{document}

\begin{abstract}
We prove analogues of Tannakian reconstruction theorems of Lurie and Bhatt--Halpern-Leistner in \emph{derived} algebraic geometry, where the basic geometric objects are spectra of animated rings rather than $\mathbb{E}_\infty$-rings. 
In this setting, symmetric monoidal $\infty$-categories are replaced by the $\Theta$-categories of Nuiten--Toën. These are enhancements of symmetric monoidal $\infty$-categories which capture the strict commutativity structure on animated rings.
\end{abstract}
\author{Joshua Mundinger}
\address{Joshua Mundinger \\
    Department of Mathematics \\
	University of California, Berkeley \\
    970 Evans Hall \\
    Berkeley, CA\\
    94720\\
    USA}
\email{mundinger@berkeley.edu}

\author{Shubhankar Sahai}
\address{Shubhankar Sahai \\
    Department of Mathematics \\
	University of California San Diego \\
    2985 Muir Ln\\ 
    La Jolla, CA \\
    92093\\
    USA}
\email{shubhankar.sahai.72@gmail.com}
\subjclass[2020]{Primary 14F08;
Secondary 14A30
}

\date{August 2, 2026}

\maketitle

\section{Introduction}

The goal of this paper is to prove Tannakian reconstruction theorems in derived algebraic geometry, that is, algebraic geometry over \emph{animated} (i.e. simplicial commutative) rings. 

\subsection{Background}\label{ssec: background}

Tannakian reconstruction concerns the extent to which an algebraic stack is determined by its category of quasi-coherent sheaves. The theory originates in the work of several authors, including the Gabriel-Rosenberg reconstruction theorem for quasi-separated schemes \cite{gabrielcategories,Ros95,Bra18} and Saavedra Rivano's thesis on representations of affine group schemes \cite{Saa72,DM82}. 

Lurie \cite{lurie04} unified these perspectives and brought attention to the relative setting: instead of only assigning to a stack $X$ the abelian category $\QCoh(X)^\heartsuit$ of quasi-coherent sheaves,
one should also consider assigning to a morphism $f: S \to X$ the pullback functor $f^*: \QCoh(X)^{\heartsuit} \to \QCoh(S)^\heartsuit$.
When we remember that $f^*$ is symmetric monoidal, Lurie showed that the association $f \mapsto f^*$ is fully faithful for a reasonable class of 
stacks. In \cite{luriedagviii}, Lurie extended the theory to \emph{spectral} algebraic geometry, where the basic geometric objects are spectra of $\Einfty$-rings, which necessitates the use of stable $\infty$-categories instead of abelian categories. The resulting theory of Tannaka duality has many applications, even to classical algebraic geometry; see for example \cite{BhattAlgebraizationTannaka}. At the most basic level, Lurie's theory relies on the observation that assigning to an $\Einfty$-ring $R$ the symmetric monoidal $\infty$-category $\Mod_R$ is fully faithful.

From the perspective of derived algebraic geometry outside of characteristic zero, it is desirable to work with animated (i.e.\ simplicial commutative) rings instead of $\Einfty$-rings. For example, in spectral algebraic geometry, unless $R$ contains $\mathbb{Q}$, the `flat' affine line $\Spec(R[t])$ is not smooth, while the `smooth' affine line $\Spec (R\{t\})$, where $R\{t\}$ is the free $\Einfty$-algebra  on one generator, is not flat. 
Working with animated rings fixes these and other issues while still allowing for robust constructions in derived algebraic geometry, such as derived intersections and deformation theory.

However, to reconstruct an animated ring $A$, the data of its symmetric monoidal $\infty$-category $\Mod_A$ is not enough. Indeed, the symmetric monoidal $\infty$-category $\Mod_A$ only depends on the $\Einfty$-ring underlying $A$, and forgetting the animated structure is not a fully faithful functor away from characteristic $0$. Philosophically, the problem arises because a symmetric monoidal $\infty$-category is an $\Einfty$-notion, 
while an animated ring has stricter commutativity requirements.

Nuiten and Toën recently proposed a fix with the notion of \emph{$\Theta$-category} \cite{NT25}. A $\Theta$-category is, roughly speaking, a symmetric monoidal $\infty$-category together with a monad refining the $\Einfty$-monad.
The assignment of $A$ to an appropriate $\Theta$-category $\Mod^{\LSym}_A$ is fully faithful, essentially because we carry along the $\infty$-category of derived $A$-algebras by construction \cite[Proposition 1.12]{NT25}.

Recent works in $p$-adic Hodge theory \cite{Dri24Prismatization, Bhatt22}, Hochschild homology \cite{universal_hkr, Mun24}, and affine stacks \cite{MM25},
which take place in derived geometry in positive or mixed characteristic, suggest that a systematic study of Tannakian theorems in derived algebraic geometry might be fruitful.
In particular, see \cite[§4]{Mun24} for an application of this Tannakian formalism in positive characteristic to the Hochschild-Kostant-Rosenberg spectral sequence for Hochschild homology.
\subsection{Results}\label{ssec: results}

Nuiten and Toën's $\Theta$-categories naturally assemble into an $(\infty,2)$-category $\thetaCat$ whose construction we recall in §\ref{section: theta-categories}.
If $X$ is a prestack\footnote{i.e., a presheaf of $\infty$-groupoids on the opposite $\infty$-category of that of animated rings.}, there is a $\Theta$-category $\QCohL(X)$, functorial in $X$, which encodes the usual symmetric monoidal $\infty$-category $\QCoh(X)$ together with the derived symmetric algebra monad $\LSym$ of Raksit and Mathew (see \cite{Rak20, BM25}). For prestacks $X$ and $Y$ over a prestack $Z$, let $\Fun_\Theta^{\QCohL(Z)}(\QCohL(X),\QCohL(Y))$ be the $\infty$-category of morphisms of $\Theta$-categories 
$\QCohL(X) \to \QCohL(Y)$ under $\QCohL(Z).$

In this setting, we are able to prove a derived version of Lurie's full faithfulness result \cite[Proposition 3.3.11]{luriedagviii}; moreover, our result is stated relative to a base:

\begin{theoremalpha}[Theorem \ref{theorem: quasi-affine diagonal and full faithfulness}]\label{maintheorem: fully faithful}
    Let $Z$ be a prestack with quasi-affine diagonal.
    Let $X$ be a prestack with quasi-affine diagonal equipped with a morphism $X \to Z$.
    Let $Y \to Z$ be a morphism of prestacks.
    Then the association $f \mapsto f^*$ gives a fully faithful embedding
    \[ 
        \Maps_Z(Y,X) \to \Fun_\Theta^{\QCohL(Z)}(\QCohL(X),\QCohL(Y)).
    \]
\end{theoremalpha}
When the target stack $X$ has a faithfully flat cover by an affine scheme and an affine diagonal, Lurie was able to identify the essential image in the spectral setting as those colimit-preserving symmetric monoidal functors which preserve connective objects and flat objects \cite[Theorem 3.4.2]{luriedagviii}, again in the relative setting. 
\begin{theoremalpha}[Theorem \ref{theorem: essential image for relative geometric stack}]\label{maintheorem: essential image}
    Let $Z$ be an fpqc sheaf with quasi-affine diagonal.
    Let $X \to Z$ be a relative geometric derived stack and let $Y \to Z$ be a morphism of prestacks. Then the association $f \mapsto f^*$ gives a fully faithful embedding
    \[\Maps_Z(Y,X) \to \Fun_\Theta^{\QCohL(Z)}(\QCohL(X),\QCohL(Y))\]
    with essential image spanned by morphisms of $\Theta$-categories preserving connective objects and flat objects.\footnote{In Lurie's work, symmetric monoidal functors are also required to be colimit-preserving. Morphisms of $\Theta$-categories preserve colimits by definition.} 
\end{theoremalpha}

In \cite[Remark 8.5]{AS25}, Antieau and Stefanich state a version of Theorem \ref{maintheorem: essential image} under the additional hypothesis of $\DAlg$-descendability of a certain morphism of derived algebras. They only consider the assignment $X \mapsto \DAlg(X)$ of a prestack to its category of derived algebras, which necessitates the stronger hypothesis.

Bhatt \cite{BhattAlgebraizationTannaka} and Bhatt and Halpern-Leistner \cite{BH17} also considered the essential image of the association $f \mapsto f^*$, seeking to relax the condition of preserving flat objects, which is difficult to check in practice. 
When $\QCoh(X)$ is compactly generated, there is an improved version of Theorem \ref{maintheorem: essential image}, as Bhatt and Halpern-Leistner discovered in the spectral setting \cite[Theorem 4.1]{BH17}.

\begin{theoremalpha}[Theorem \ref{theorem: relative compactly generated}]\label{maintheorem: relative compactness}
    Let $Z$ be an fpqc sheaf with affine diagonal. Let $X \to Z$ be a relative quasi-geometric stack such that $\QCoh(X)$ is compactly generated and let $Y\to Z$ be a morphism of prestacks. 
    Then the association $f \mapsto f^*$ gives a fully faithful embedding
    \[\Maps_Z(Y,X)\to \Fun_\Theta^{\QCohL(Z)}(\QCohL(X),\QCohL(Y))\]
    with essential image spanned by morphisms of $\Theta$-categories preserving connective objects.
\end{theoremalpha}

If $X$ is a quasi-compact quasi-separated derived algebraic space, then $\QCoh(X)$ is compactly generated; in this case preservation of connective objects is automatic, so that $f \mapsto f^*$ is an equivalence. This follows the classical case due to Bhatt \cite[Theorem 1.5]{BhattAlgebraizationTannaka}. See Corollary \ref{cor: ff embedding algsp} for more details.

There is a similar improvement when the target stack is locally Noetherian (Theorems \ref{theorem: noetherian geometric} and \ref{theorem: noetherian quasi-geometric}).

\begin{theoremalpha}[Theorem \ref{theorem: noetherian quasi-geometric}]\label{maintheorem: noetherian quasi-geometric}
    Let $X$ be a locally Noetherian quasi-geometric stack and let $Y$ be any prestack. Then the association $f \mapsto f^*$ gives a fully faithful embedding
    \[\Maps(Y,X)\to \Fun_\Theta^{\Mod_{\ZZ}^{\LSym}}(\QCohL(X),\QCohL(Y))\] 
    with essential image spanned by morphisms of $\Theta$-categories preserving connective objects and almost perfect objects.
\end{theoremalpha}

Lastly, we are able to formulate some theorems for derived formal stacks using the relative statements of Theorems \ref{maintheorem: fully faithful} and \ref{maintheorem: relative compactness}. Let $A$ be an animated ring, $J\subset H^0(A)$ be a finitely generated ideal, and $\Spf(A,J)$ be the $J$-adic formal spectrum of $A$.

\begin{theoremalpha}[Theorem \ref{theorem: formal compactly generated}]\label{maintheorem: formal relative compactness}
    Let $X\to \Spf(A,J)$ be a relative quasi-geometric stack such that $\QCoh(X)$ is compactly generated and let $Y \to \Spf(A,J)$ be a morphism of prestacks. Then the association $f \mapsto f^*$ gives a fully faithful embedding
    \[\Maps_{\Spf(A,J)}(Y,X)\to \Fun_\Theta^{(\Mod_A^{\LSym})^\jcomp}(\QCohL(X),\QCohL(Y))\]
    with essential image spanned by morphisms of $\Theta$-categories preserving connective objects.
\end{theoremalpha}
Here $(\Mod_A^{\LSym})^\jcomp$ is a natural $\Theta$-structure on the $\infty$-category $\Mod_A^\jcomp$ of (derived) $J$-complete $A$-modules, obtained by completing the monad $\LSym_A$ on $\Mod_A$. See §\ref{subsec: theta structure formal modules} for details.

\subsection{Summary of the paper}\label{ssec: summary of the paper}

In §\ref{section: preliminaries}, we review preliminary notions in derived algebraic geometry. In §\ref{section: theta-categories}, we recall the notion of $\Theta$-category of Nuiten and Toën \cite{NT25} and construct a $\Theta$-structure on the $\infty$-category of quasi-coherent sheaves on a derived prestack. In §\ref{section: quasi-affine}, we discuss quasi-affine morphisms of prestacks. 
In §\ref{section: lurie's theorems}, we prove derived versions of Lurie's spectral Tannakian theorems. §\ref{section: compactly generated} and §\ref{section: noetherian} give improvements to these theorems when the stack in question has compactly generated derived category or is Noetherian, respectively. 
Finally, §\ref{section: formal} discusses Tannakian reconstruction for formal stacks.

Let us now mention what is not in this paper. We deal only with the question of reconstruction (full faithfulness of $X \mapsto \QCohL(X)$) and not of recognition (the essential image of $X \mapsto \QCohL(X)$); preliminary results on recognition in this setting are in \cite[§2]{NT25}. We also do not consider Tannakian reconstruction for sheaves of $(\infty,n)$-categories for $n>1$, as in Scholze and Stefanich's theory of \emph{gestalten} \cite{gestalten}. Thus, our results do not and cannot apply to certain higher stacks such as $B^2\mathbb{G}_m$,
for which $\QCohL$ is too coarse of an invariant\footnote{$B^2\mathbb{G}_m \to \ast$ induces an equivalence on quasi-coherent sheaves, see \cite[Example 1.13]{gestalten}. The theorems of this article do not apply since the diagonal of $B^2\mathbb{G}_m$ is represented by $B\mathbb{G}_m$, which is not quasi-affine. Conceptually, it is important that the diagonal is relatively weakly $0$-affine in the sense of Gaitsgory; see Remark \ref{rem: full faithfulness and derived affine?} for more details.}. 
Further, the theory of gestalten is at present only formulated in the spectral setting, that is, using $\Einfty$-rings, and it is not clear how to strictify the notion. In \cite{Ste23}, Stefanich used sheaves of $(\infty,1)$-categories to show that for spectral geometric stacks, the hypothesis ``preserves connective objects'' is redundant. It would be interesting to know if the same holds for derived geometric stacks.

\subsection*{Acknowledgments}
The authors thank
Ben Antieau, 
Dima Arinkin,
Bhargav Bhatt,
Adeel Khan,
Bertrand Toën, 
Nathan Wenger 
and
Chris Xu
for useful comments and conversations.

In the course of this work, J.\ M.\ was supported by the National Science Foundation under Award No.\ 2503534.
S.\ S.\ was partially supported by NSF grant DMS-2401536 under Kiran Kedlaya.
Any opinions, findings, and conclusions or recommendations expressed in this material are those of the authors and do not necessarily reflect the views of the National Science Foundation. 

\subsection*{LLM Statement}
ChatGPT 5.6 Sol devised Example \ref{ex: einfty localization but not derived}. The example was verified for accuracy by the authors. 

\section{Preliminaries}\label{section: preliminaries}

We adopt the convention that a category is an $(\infty,1)$-category in the sense of Lurie \cite{HTT}. Similarly, a groupoid will always mean an $\infty$-groupoid. Let $\Spc$ be the category of spaces/($\infty$-)groupoids/anima. By a $2$-category we mean an $(\infty,2)$-category; our reference for $2$-categories is \cite[Appendix D]{HM6ff}. 

Let $\aring$ be the category of animated rings. Let $\Aff := \aring^{op}$  be the category of affine schemes.
In this article, a \emph{prestack} is an accessible functor $X\colon \Aff^{op} \to \Spc$\footnote{
    A functor is accessible if it commutes with $\kappa$-filtered colimits for some regular cardinal $\kappa$.
}. Let $\PreStk$ be the category of prestacks. 
For $X \in \PreStk$, the category of prestacks over $X$ is the comma category $\PreStk_{/X}$. 
It is equivalent to the category of accessible $\Spc$-valued functors on the category $\aring_{/X}$ of animated rings $R$ with a chosen point of $X(R)$. 
When the base is an affine scheme $\Spec(A)$, we let $\PreStk_A := \PreStk_{/\Spec(A)}$.

For $t$-structures, we use cohomological indexing as in \cite{GR17I}. Thus, for an animated ring $A$, the subcategory $\Mod_A^{\leq 0} \subset \Mod_A$ of connective $A$-modules consists of those $A$-modules $M$ with $H^i(M) = 0$ for $i > 0$. 
The suspension $[1]$, normalized with respect to the shift functor on the category of spectra, has the effect $H^i(M[1])=H^{i+1}(M)$.

\subsection{Geometric and quasi-geometric stacks}

We now define the class of stacks for which we will later calculate the essential image of $f \mapsto f^*$.

Recall (cf. \cite[§3.4]{luriedagviii}) that a \emph{geometric stack} is an fpqc sheaf $X$ with affine diagonal admitting a faithfully flat morphism from an affine scheme.

\begin{definition}\label{defn: relative geometric stack}
    A \emph{relative geometric stack} is a morphism of fpqc sheaves $X \to Y$ where
    \begin{enumerate}
        \item the diagonal $X \to X \times_Y X$ is affine;
        \item there exists an affine morphism $X' \to Y$ and a faithfully flat morphism $X' \to X$ over $Y$.
    \end{enumerate}
\end{definition}
Since $X \to Y$ has affine diagonal, the morphism $X' \to X$ is automatically affine.

Similarly, a \emph{quasi-geometric stack} is an fpqc sheaf $X$ with quasi-affine diagonal admitting a faithfully flat morphism from an affine scheme.
\begin{definition}\label{defn: relative qg stack}
    A \emph{relative quasi-geometric stack} is a morphism of fpqc sheaves $X \to Y$ such that 
    \begin{enumerate}
        \item the diagonal $X \to X\times_Y X$ is quasi-affine;
        \item there exists an affine morphism $X'\to Y$ and a faithfully flat morphism $X' \to X$ over $Y$.
    \end{enumerate}
\end{definition}
It follows that the morphism $X' \to X$ is automatically quasi-affine.

\begin{remark}
    Note that in \cite[§9.1.2]{Lurie-SAG}, Lurie defines a quasi-geometric morphism to be a morphism of prestacks $X \to Y$ whose base change to any affine scheme is a quasi-geometric stack. This is a more general notion than Definition \ref{defn: relative qg stack}, which requires a global presentation as a quotient by a quasi-affine groupoid relative to the base. Our proofs rely on such a presentation.
\end{remark}

The notions of relative geometric stack and relative quasi-geometric stack are stable under base change.

\begin{remark}
    The proofs of this article work when we weaken Definitions \ref{defn: relative geometric stack} and \ref{defn: relative qg stack} to allow $Y$ to just be a prestack provided that we require $X$ to be an fpqc sheaf on $\aring_{/Y}$. However, for ease of exposition, we work with the stronger hypothesis that both $X$ and $Y$ are fpqc sheaves.
\end{remark}

\section{Recollection of $\Theta$-categories}\label{section: theta-categories}

\subsection{The definition of $\Theta$-category}\label{ssec: theta-category}


Informally, a $\Theta$-category is a presentable symmetric monoidal category $\CC$ equipped with a sifted-colimit-preserving monad $M$ and a morphism of monads $\Einfty \to M$ such that $1_\CC \to M(\varnothing)$ and $M(x) \otimes M(y) \to M(x \sqcup y)$ are equivalences for all $x,y \in \CC$ \cite[7]{NT25}.
The conditions on $M$ are equivalent to requiring that the map $\CC^{M} \to \CC^{\Einfty}$ commutes with arbitrary colimits.

We now review the formal definition of $\Theta$-category, following \cite[Definition 1.5]{NT25}. 
Let $\PrR$ be the $2$-category of presentable categories with accessible right adjoint functors.
\begin{definition}
The 2-category $\Prmu$ is the 2-full\footnote{A subcategory of a 2-category $C' \subset C$ is 2-full if $\Hom_{C'}(x,y) \to \Hom_C(x,y)$ is a fully faithful functor for all $x,y \in C'$ \cite[Definition D.1.3(b)]{HM6ff}.}
subcategory of $\Fun(\Delta^1,\PrR)^{op}$ 
with 
\begin{itemize}
    \item objects given by conservative functors $G: T' \to T$ which preserve sifted colimits;
    \item morphisms from $G_1: T'_1 \to T_1$ to $G_2: T'_2 \to T_2$ given by those commutative squares of right adjoint functors 
    \[
\begin{tikzcd}[cramped]
	{T'_2} & {T'_1} \\
	{T_2} & {T_1}
	\arrow[from=1-1, to=1-2]
	\arrow["{G_2}", from=1-1, to=2-1]
	\arrow["{G_1}", from=1-2, to=2-2]
	\arrow[from=2-1, to=2-2]
\end{tikzcd}\]
which are left-adjointable.
\end{itemize}
\end{definition}
The Barr-Beck-Lurie theorem \cite[Theorem 4.7.0.3]{HA} implies that if $G \in \Prmu$, then $G$ is monadic; since $G$ preserves sifted colimits, so does the associated monad.
Thus, $\Prmu$ may be considered as the 2-category of sifted-colimit-preserving monads.
Forgetting the monad and passing to left adjoints gives a conservative functor $\Prmu \to \PrL$, where $\PrL$ is the $2$-category of presentable categories with left adjoint functors.

Let $\CAlg(\PrL)$ be the $2$-category of presentable symmetric monoidal categories and colimit-preserving symmetric monoidal functors.
There is a functor $\Einfty: \CAlg(\PrL) \to \Prmu$ sending a symmetric monoidal category to the $\Einfty$-monad on it.
Let $C$ be the comma object 
\[
\begin{tikzcd}[ampersand replacement=\&]
	C \& \Prmu \\
	\CAlg(\PrL)\& {\Fun(\Delta^1,\PrR)^{op}}
	\arrow[from=1-1, to=1-2]
	\arrow[from=1-1, to=2-1]
	\arrow["r", from=1-2, to=2-2]
	\arrow[Rightarrow, from=2-1, to=1-2]
	\arrow["{r \circ \Einfty}"', from=2-1, to=2-2]
\end{tikzcd}
  \]
where  $r: \Prmu \to \Fun(\Delta^1,\PrR)^{op}$ is the tautological inclusion.
\begin{definition}[\cite{NT25}, Definition 1.5]
   The $2$-category of \emph{$\Theta$-categories} $\thetaCat$ is the full subcategory of the $2$-category $C$ defined above spanned by those objects 
\[ 
\begin{tikzcd}[ampersand replacement=\&]
	{\CC_2^M} \& {\CAlg(\CC_1)} \\
	{\CC_2} \& {\CC_1}
	\arrow[from=1-1, to=1-2]
	\arrow[from=1-1, to=2-1]
	\arrow[from=1-2, to=2-2]
	\arrow["{u_*}", from=2-1, to=2-2]
\end{tikzcd}
\]
such that 
\begin{enumerate}
    \item $u_*: \CC_2 \to \CC_1$ is an equivalence;
    \item $\CC_2^M \to \CAlg(\CC_1)$ commutes with arbitrary colimits.
\end{enumerate}
\end{definition}

For us, the motivating example of a $\Theta$-category comes from derived $\ZZ$-algebras:
\begin{example}\label{ex: theta-category of Z-modules}
    From work of Brantner-Mathew \cite[§4-5]{BM25} and Raksit \cite[§4]{Rak20}, there exists a sifted-colimit-preserving monad $\LSym_\ZZ$ on $\Mod_\ZZ$ preserving $\Mod_\ZZ^{\leq 0}$ such that $\left( \Mod_\ZZ^{\leq 0}\right)^{\LSym}$ is the category of animated rings.
    Further, since polynomial rings are $0$-truncations of free $\Einfty$-algebras, there is a map of monads $\mathbb{E}_{\infty,\ZZ} \to \LSym_{\ZZ}$ \cite[Construction 4.2.19]{Rak20}.
    The induced map $\Mod_{\ZZ}^{\LSym_{\ZZ}} \to \Mod_{\ZZ}^{\Einfty}$ preserves all colimits \cite[Proposition 4.2.27]{Rak20}.
    Hence $(\Mod_\ZZ, \LSym_\ZZ)$ forms a $\Theta$-category.
\end{example}

\begin{lemma}[\cite{NT25}, Remark 1.6]\label{lemma: theta-categories have small limits}
    The $1$-category underlying $\thetaCat$ has small limits,
    and the forgetful functors $\thetaCat \to \CAlg(\PrL)$ and $\thetaCat \to \PrL$ are conservative and preserve limits.
\end{lemma}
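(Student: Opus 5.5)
The plan is to build limits in $\thetaCat$ objectwise from limits in the ambient $2$-categories, then check that the resulting object again satisfies conditions (1) and (2) of the definition. The inputs are that $\PrL$ and $\CAlg(\PrL)$ have small limits, computed on underlying categories (i.e.\ in $\widehat{\mathrm{Cat}}_\infty$), and that $\Fun(\Delta^1,\PrR)^{op}$ has limits computed pointwise (limits in $\PrL$ correspond to limits in $\PrR$ along right adjoints).

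\emph{Step 1: limits in $\Prmu$.} Let $(G_i\colon T'_i\to T_i)_{i\in I}$ be a diagram in $\Prmu$. Its transition maps are left-adjointable squares of right adjoints. I would form $G=\lim G_i\colon \lim T'_i\to\lim T_i$, with limits taken in $\PrR$, i.e.\ on underlying categories along the right adjoints. I then need three facts about $G$.
\begin{itemize}
\item $G$ is conservative: a map is an equivalence in a limit of categories iff it is so componentwise.
\item $G$ preserves sifted colimits: sifted colimits in $\lim T'_i$ are computed componentwise, because the transition functors are monadic right adjoints commuting with sifted colimits.
\item The projection squares are left-adjointable, and the limit is also a limit in $\Prmu$: by Lurie's results on limits of adjointable diagrams (HA, §4.7.4), a limit of left-adjointable squares is left-adjointable, and the left adjoints are computed componentwise.
\end{itemize}
This shows $\Prmu\to\Fun(\Delta^1,\PrR)^{op}$ preserves limits. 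The $2$-category $C$ is a comma object of limit-preserving functors ($\Einfty$ preserves limits since $\CAlg(\lim\CC_i)=\lim\CAlg(\CC_i)$). Hence $C$ has limits computed componentwise.

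\emph{Step 2: closure of $\thetaCat$ in $C$.} A limit of objects satisfying (1) again has $u_*$ an equivalence, since it is a limit of equivalences. For (2), I must show $\lim \CC_i^{M_i}\to \lim\CAlg(\CC_i)=\CAlg(\lim \CC_i)$ preserves all colimits. This is the main obstacle, because colimits in a limit of presentable categories along right adjoints are not computed componentwise. My approach is to pass to left adjoints. The transition functors $\CC_i^{M_i}\to\CC_j^{M_j}$ are left-adjointable squares over the transitions $\CAlg(\CC_i)\to\CAlg(\CC_j)$, and each map $\CC_i^{M_i}\to\CAlg(\CC_i)$ preserves colimits. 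So these maps assemble into a natural transformation of $I^{op}$-diagrams in $\PrL$ (between the left-adjoint diagrams), using the adjointability to identify the Beck–Chevalley mates. Since limits in $\PrR$ agree with colimits in $\PrL$, the induced functor on limits is the colimit in $\PrL$ of colimit-preserving functors, and it therefore preserves colimits. I expect the delicate point to be verifying that the mate transformations for $\CC^M\to\CAlg$ are equivalences: both routes $\CC_j^{M_j}\to\CAlg(\CC_i)$ are left adjoints of the two equal composites of right adjoints.

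\emph{Step 3: conservativity and preservation of limits.} Limits are computed componentwise in $C$, hence in $\thetaCat$, so both forgetful functors $\thetaCat\to\CAlg(\PrL)\to\PrL$ preserve limits. For conservativity, suppose a morphism $(\CC,M)\to(\DD,N)$ in $\thetaCat$ induces an equivalence $\CC\simeq\DD$ in $\PrL$. The square of right adjoints $\DD^N\to\CC^M$ over $\DD\simeq\CC$ commutes with the conservative monadic forgetful functors, and it is left-adjointable. So the mate $M\to N$ is a map of monads whose underlying endofunctors agree after applying the equivalence, namely both are $G\circ F$ computed through the square. Hence $\DD^N\to\CC^M$ is an equivalence by Barr–Beck–Lurie. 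Symmetric monoidality is likewise detected on underlying functors, which gives conservativity of $\thetaCat\to\CAlg(\PrL)$ as well.
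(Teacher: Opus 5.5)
Your overall strategy (build limits componentwise in the comma object $C$, then check that conditions (1) and (2) survive) is the right one. The genuine gap is that you take the limit in the wrong direction, and the error propagates through all three steps.

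Since $\Prmu$ sits inside $\Fun(\Delta^1,\PrR)^{op}$, a limit there is a \emph{colimit} in $\Fun(\Delta^1,\PrR)$. Pointwise, that is a colimit in $\PrR\simeq(\PrL)^{op}$, i.e.\ a limit in $\PrL$, computed in $\widehat{\mathrm{Cat}}_\infty$ along the \emph{left} adjoint transition functors (the $f^*$'s). Your parenthetical ``limits in $\PrL$ correspond to limits in $\PrR$ along right adjoints'' is false. Your own Step 2 records that limits in $\PrR$ are colimits in $\PrL$, so the object you build in Step 1 has underlying category $\ccolim^{\PrL}_i T_i$, not $\lim_i T_i$. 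Concretely, for a cospan $\CC_a\to\CC_c\leftarrow\CC_b$ in $\thetaCat$, your recipe takes a limit over the span $T_a\leftarrow T_c\to T_b$ of right adjoints. That limit is just $T_c$, whereas the correct limit has underlying category $\CC_a\times_{\CC_c}\CC_b$; this is what makes $\QCoh(X)=\lim\Mod_A$ along pullbacks. So Step 3's claim that the forgetful functors preserve limits fails for the object you constructed.

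Separately, the second bullet of Step 1 is unjustified even on its own terms. In a diagram in $\Prmu$ only the vertical functors $G_i$ are monadic and sifted-colimit-preserving. The horizontal right adjoints, such as $u_*$ and $f_*$ of a $\Theta$-functor (e.g.\ pushforward along a non-quasi-compact map), are not monadic and need not preserve sifted colimits.

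The fix is to take limits along the left adjoints, after which everything is easy. For $i\to j$, let $R'_{ji}\colon T'_j\to T'_i$ and $R_{ji}\colon T_j\to T_i$ be the transition right adjoints. Write $F'_{ij}$, $F_{ij}$ for their left adjoints and $L_i$ for the left adjoint of $G_i$.

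\begin{enumerate}
\item Left-adjointability of the transition squares says exactly $G_jF'_{ij}\simeq F_{ij}G_i$. Hence the $G_i$ assemble to $G\colon\lim_iT'_i\to\lim_iT_i$.
\item Because the transition functors are left adjoints, \emph{all} colimits in both limits are computed componentwise. So $G$ is conservative and preserves sifted colimits.
\item $G$ is right adjoint to $\lim_iL_i$. This exists because the commuting squares $G_iR'_{ji}\simeq R_{ji}G_j$ give commuting squares $F'_{ij}L_i\simeq L_jF_{ij}$ of left adjoints.
\item Condition (2) is then immediate rather than the ``main obstacle''. By the left-adjointable squares in the description of $\Theta$-functors, the forgetful functors $\DAlg(\CC_i)\to\CAlg(\CC_i)$ commute with the left-adjoint transition maps. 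So $\lim_i\DAlg(\CC_i)\to\lim_i\CAlg(\CC_i)$ is computed componentwise, and colimits on both sides are componentwise.
\end{enumerate}

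Your detour through mates and colimits in $\PrL$ in Step 2 is an artifact of having constructed the wrong object. Your conservativity argument in Step 3 is fine as sketched.
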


\begin{notation}
    Given a $\Theta$-category $\CC$ with monad $M$, we let $\DAlg(\CC)$ be the category of $M$-algebras.
\end{notation}

Call a morphism in $\thetaCat$ a \emph{$\Theta$-functor}. Let $\Fun_\Theta(\CC,\CC')$ be the category of $\Theta$-functors between $\CC,\CC'\in\thetaCat$.
If $\CC \in \thetaCat$, we write $\Fun_{\Theta}^{\CC}$ for the category of morphisms in $\thetaCat_{\CC/}$.

\begin{remark}[Explicit description of $\Theta$-functors. \cite{NT25}, Lemma 1.8]\label{remark: morphisms of theta cats}
It follows from the definition that a $\Theta$-functor $\CC_1 \to \CC_2$ is a triple $(f^*,u_*,\alpha)$ where $f^*: \CC_1 \to \CC_2$ is a colimit-preserving symmetric monoidal functor, $u_*: \DAlg(\CC_2) \to \DAlg(\CC_1)$ is a right adjoint functor, and $\alpha$ is a left-adjointable commutative square in $\PrR$
\[ 
\begin{tikzcd}[ampersand replacement=\&]
	{\DAlg(\CC_2)} \& {\DAlg(\CC_1)} \\
	{\CAlg(\CC_2)} \& {\CAlg(\CC_1)}
	\arrow["{u_*}"', from=1-1, to=1-2]
	\arrow[from=1-1, to=2-1]
	\arrow[from=1-2, to=2-2]
	\arrow["{f_*}"', from=2-1, to=2-2]
\end{tikzcd}
\]
where $f_*$ is induced by the right adjoint to $f^*$.
\end{remark}

\subsection{Module $\Theta$-categories}\label{subsection: module theta-categories}

Given a $\Theta$-category $\CC$ and $B \in \DAlg(\CC)$, 
consider the symmetric monoidal category $\Mod_B(\CC)$ of modules over the underlying $\Einfty$-algebra of $B$.
There is a natural enhancement $\Mod_B(\CC)$ to a $\Theta$-category, constructed as follows \cite[§1.3]{NT25}: 
define $\DAlg(\Mod_B(\CC))$ to be the comma category $\DAlg(\CC)_{B/}$ together with the structure map 
\[ \DAlg(\Mod_B(\CC)) = \DAlg(\CC)_{B/} \to \CAlg(\CC)_{B/} \cong \CAlg(\Mod_B(\CC)) \to \Mod_B(\CC).\]
This defines a $\Theta$-category structure on $\Mod_B(\CC)$, for which we will use the same notation $\Mod_B(\CC)$.

\begin{proposition}[\cite{NT25}, Proposition 1.12]\label{prop: module cat adjunction}
    Given a $\Theta$-category $\CC$, assigning $B\in \DAlg(\CC)$ to $\CC \to \Mod_B(\CC)$ induces a fully faithful left adjoint functor 
    \[ \DAlg(\CC) \to \thetaCat_{\CC/}\]
    whose right adjoint sends $g^*: \CC \to \CC'$ to $g_*\one_{\CC'} \in \DAlg(\CC)$.
\end{proposition}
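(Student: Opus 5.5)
We prove the adjunction directly, by exhibiting a unit and a counit and checking the triangle identities. Full faithfulness then follows by showing that the unit is an equivalence.

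\textbf{The functor.} A map $B \to B'$ in $\DAlg(\CC)$ induces a $\Theta$-functor $\Mod_B(\CC) \to \Mod_{B'}(\CC)$ under $\CC$. Its underlying symmetric monoidal functor is $B' \otimes_B -$. On derived algebras it is given by the right adjoint $\DAlg(\CC)_{B'/} \to \DAlg(\CC)_{B/}$, precomposition with $B \to B'$. The square of Remark~\ref{remark: morphisms of theta cats} commutes because forgetting from derived to $\Einfty$-algebras is compatible with restriction along $B \to B'$. It is left-adjointable because the left adjoints are pushouts $B' \sqcup_B -$, and the forgetful functor $\DAlg(\CC)\to\CAlg(\CC)$ preserves colimits, by condition (2) in the definition of $\Theta$-category.

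\textbf{The counit.} Let $g=(g^*,u_*,\alpha)\colon \CC \to \CC'$ be a $\Theta$-functor. By Remark~\ref{remark: morphisms of theta cats}, $u_*\one_{\CC'}$ is a derived algebra in $\CC$ whose underlying $\Einfty$-algebra is $g_*\one_{\CC'}$; this is what we mean by $g_*\one_{\CC'}\in\DAlg(\CC)$. Set $B = g_*\one_{\CC'}$ and construct the counit $\Mod_B(\CC) \to \CC'$ as follows.
\begin{itemize}
\item The underlying symmetric monoidal functor is $M \mapsto g^*M \otimes_{g^*B} \one_{\CC'}$. Here $g^*B \to \one_{\CC'}$ is the counit of $g^* \dashv g_*$; this is the standard $\Einfty$-level counit of the adjunction between $\CAlg(\CC)$ and $\CAlg(\PrL)_{\CC/}$.
\item On derived algebras the right adjoint is $\DAlg(\CC') \to \DAlg(\CC)_{B/}$, sending $A' \mapsto (u_*\one_{\CC'} \to u_*A')$. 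This uses that $u_*$ is a functor and $\one_{\CC'}$ is initial in $\DAlg(\CC')$.
\item Compatibility with the $\Einfty$-level square, and left-adjointability, follow from the corresponding properties of $\alpha$.
\end{itemize}
The unit $B \to (\CC\to\Mod_B(\CC))_*\one = B$ is the identity, since the pushforward of the unit of $\Mod_B(\CC)$ is $B$ with its given derived structure. This is immediate from the definition $\DAlg(\Mod_B(\CC)) = \DAlg(\CC)_{B/}$: the initial object there is $\mathrm{id}_B$, and its image under the right adjoint is $B$. The triangle identities reduce to the $\Einfty$-case plus this identification.

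\textbf{Cleaner packaging, and the main obstacle.} A more robust route is to show directly that the map
\[\Maps_{\thetaCat_{\CC/}}(\Mod_B(\CC),\CC') \to \Maps_{\DAlg(\CC)}(B, g_*\one_{\CC'})\]
is an equivalence, by reducing to the two pieces of data in a $\Theta$-functor.
\begin{itemize}
\item On the symmetric monoidal part, this is the known statement: $\CAlg(\PrL)_{\CC/}$-maps from $\Mod_B(\CC)$ correspond to $\Einfty$-maps $B\to g_*\one$.
\item On the monadic part, a map of comma categories $\DAlg(\CC')\to\DAlg(\CC)_{B/}$ lying over $u_*$ is determined by the structure map of the image of the initial object. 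That is exactly a derived lift of the $\Einfty$-map.
\end{itemize}
The main obstacle is making this decomposition rigorous at the $(\infty,2)$-level. Mapping spaces in the comma 2-category $C$ are fiber products of mapping spaces in $\CAlg(\PrL)$ and in $\Prmu$ over $\Fun(\Delta^1,\PrR)^{op}$, so one must identify these fibers carefully, using Barr–Beck to handle the monads. Granting this, full faithfulness follows because the unit $B \to B$ is an equivalence.
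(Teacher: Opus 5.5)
The paper gives no proof of this proposition; it is quoted from \cite{NT25}, so there is no internal argument to compare with. Your second (``cleaner'') argument is correct and is the natural proof. Moreover, the $(\infty,2)$-categorical bookkeeping you single out as the main obstacle is already supplied by Remark~\ref{remark: morphisms of theta cats} (Lemma 1.8 of \cite{NT25}). That remark presents a $\Theta$-functor as a triple $(f^*,v_*,\beta)$, with left-adjointability of $\beta$ a mere property, so you need neither Barr--Beck nor a direct analysis of the comma object.

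Using that Remark, and writing $U,U'$ for the forgetful functors $\DAlg\to\CAlg$, a point of $\Maps_{\thetaCat_{\CC/}}(\Mod_B(\CC),\CC')$ consists of three pieces of data:
\begin{enumerate}
\item a symmetric monoidal $f^*$ under $\CC$, equivalently a map $\phi\colon UB\to g_*\one_{\CC'}$ in $\CAlg(\CC)$;
\item a lift $v_*$ of $u_*$ along the left fibration $\DAlg(\CC)_{B/}\to\DAlg(\CC)$; the space of such lifts is the space of transformations from the constant functor at $B$ to $u_*$, which is $\Maps_{\DAlg(\CC)}(B,u_*\one_{\CC'})$ because $\DAlg(\CC')$ has the initial object $\one_{\CC'}$ (every such lift is automatically an accessible right adjoint, since the projection creates limits and filtered colimits);
\item the square $\beta$, which is a path between the two lifts of $Uu_*\simeq g_*U'$ to $\CAlg(\CC)_{UB/}$ determined by $v_*$ and by $f_*$, i.e.\ a homotopy $U\psi\simeq\phi$, where $\psi$ is the derived map attached to $v_*$.
\end{enumerate}
The fiber product therefore collapses to $\Maps_{\DAlg(\CC)}(B,u_*\one_{\CC'})$, naturally in $B$. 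The unit is your identification of $B$ with the pushforward of the unit of $\Mod_B(\CC)$, which gives full faithfulness.

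A few details need fixing. First, left-adjointability is indeed automatic, but for the counit square it also uses that $U'$ preserves pushouts (the $\Theta$-axiom for $\CC'$), not only the left-adjointability of $\alpha$. Second, your first route (an objectwise counit, with triangle identities that ``reduce to the $\Einfty$-case'') does not by itself produce coherent unit and counit transformations. It is also unnecessary: by the pointwise criterion for adjoints, it suffices that for each $\CC'$, composing with your objectwise counit $\Mod_{u_*\one_{\CC'}}(\CC)\to\CC'$ induces the equivalence above. Third, right after the statement the paper uses its $2$-categorical form, namely that $\Fun_\Theta^{\CC}(\Mod_B(\CC),\CC')$ is a groupoid. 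Your mapping-space argument does not address non-invertible $2$-cells, but these are automatically invertible. On the symmetric monoidal side, a $2$-cell under $\CC$ is invertible on free modules, which generate $\Mod_B(\CC)$ under colimits. On the algebra side, a transformation into $\DAlg(\CC)_{B/}$ lying over an equivalence in $\DAlg(\CC)$ is invertible because left fibrations are conservative.
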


Thus, for $g: \CC \to \CC'$ a $\Theta$-category under $\CC$,
\[ \Fun_{\Theta}^{\CC}(\Mod_B(\CC), \CC') \simeq \Hom_{\DAlg(\CC)}(B, g_*\one_{\CC'}).\]
In particular, the category $\Fun_\Theta^{\CC}(\Mod_B(\CC),\CC')$ is a groupoid.

The following proposition, which is a relative variant of \cite[Proposition 1.13]{NT25}, gives a criterion for recognition of a module $\Theta$-category via the projection formula. 
\begin{proposition}[\cite{NT25}, Proposition 1.13]\label{prop: recognition of module category}
Let $F: \CC_1 \to \CC_2$ be a morphism of $\Theta$-categories.
    Assume that 
    \begin{enumerate}
        \item on underlying categories, the right adjoint functor $G: \CC_2 \to \CC_1$ is monadic;
        \item $(F,G)$ satisfies the projection formula, that is, for $y \in \CC_2$ and $x \in \CC_1$, the natural morphism 
        \[ x \otimes G(y) \to G(F(x) \otimes y)\]
        is an equivalence in $\CC_1$.
    \end{enumerate}
    Then the counit 
    \[ \Mod_{G(\one_{\CC_2})}(\CC_1) \to \CC_2\]
    of the adjunction of Proposition \ref{prop: module cat adjunction} is an equivalence of $\Theta$-categories under $\CC_1$.
\end{proposition}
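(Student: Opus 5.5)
We split the statement into two parts. First we identify the underlying symmetric monoidal categories. Then we check that the derived-algebra structures agree, so that the counit is an equivalence of $\Theta$-categories.

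Set $B := G(\one_{\CC_2}) \in \DAlg(\CC_1)$; by Proposition \ref{prop: module cat adjunction} this is the value of the right adjoint on $F$. The counit $\epsilon\colon \Mod_B(\CC_1) \to \CC_2$ is a $\Theta$-functor under $\CC_1$. By Lemma \ref{lemma: theta-categories have small limits} the forgetful functor $\thetaCat \to \CAlg(\PrL)$ is conservative, but an equivalence of $\Theta$-categories also requires the square of derived algebras to be an equivalence. So we check two things separately:
\begin{enumerate}
    \item $\epsilon$ is an equivalence on underlying categories;
    \item the induced functor $\DAlg(\CC_2) \to \DAlg(\Mod_B(\CC_1)) = \DAlg(\CC_1)_{B/}$ is an equivalence.
\end{enumerate}
Since the commuting square is then compatible with the forgetful functors, these two checks together give the equivalence in $\thetaCat$.

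For (1), we argue as in the classical Barr--Beck proof. The monad $GF$ on $\CC_1$ is computed by the projection formula: $GF(x) \simeq G(F(x)\otimes \one) \simeq x \otimes G(\one) = x\otimes B$. One must check that this identification is compatible with the monad structures, so that $GF \simeq B\otimes(-)$ as monads. This comes from the lax monoidal structure of $G$, through which $G$ lands in $B$-modules. Since $G$ is monadic by hypothesis, $\CC_2 \simeq \CC_1^{GF} \simeq \Mod_B(\CC_1)$. We also need this equivalence to be the counit $\epsilon$, whose underlying left adjoint is $M \mapsto F(M)\otimes_{F(B)} \one$. Its right adjoint is $G$ viewed as valued in $B$-modules, which is the comparison functor, so the two agree. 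The projection formula also makes $\epsilon$ symmetric monoidal compatibly.

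For (2), we look at the right adjoints. The right adjoint of $\DAlg(\Mod_B(\CC_1)) \to \DAlg(\CC_2)$ sends $A \mapsto G(A)$, which lies under $G(\one) = B$; this is the functor $u_*$ from Remark \ref{remark: morphisms of theta cats}. It lies over the underlying equivalence from (1), and both $\DAlg$ categories are monadic over the equivalent underlying categories. It therefore suffices to show that the comparison transformation between the two monads is an equivalence. The monad on $\DAlg(\CC_1)_{B/}$ sends a $B$-module $N$ to the pushout $B \sqcup_{M(B)} M(N)$ in $\DAlg(\CC_1)$, where $M$ is the monad of $\CC_1$. On $\CC_2$ it is $M_2(y)$. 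The left-adjointable square for $F$ gives $F^{\DAlg}(M(x)) = M_2(F x)$, where $F^{\DAlg}$ denotes the left adjoint of $u_*$.

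The key computation is that for a $B$-module $N \simeq G(y)$, the base-change map from the pushout $B\sqcup_{M(B)} M(N)$ to $G(M_2(y))$ is an equivalence. We reduce to free objects: every $y$ is a colimit of objects of the form $F(x)$ tensored with $\one$, and the free $B$-module on $x$ corresponds to $F(x)$. For these,
\[ B \sqcup_{M(B)} M(B \otimes x)\simeq B \sqcup M(x) \simeq G F^{\DAlg}(M(x)) \simeq G(M_2(Fx)).\]
Here we use the coproduct formula $M(a\oplus b)\simeq M(a)\otimes M(b)$ and again the projection formula for $G$ applied to $F^{\DAlg}(M(x))\otimes$ (pullback of $B$). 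To pass from free objects to all objects we use that both sides commute with sifted colimits. This uses that $\DAlg \to \CAlg$ preserves colimits and that $G$ preserves sifted colimits, since $G$ is monadic and preserves colimits by the projection formula.

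We expect this monad comparison to be the main obstacle. The underlying identification (1) is formal Barr--Beck, but matching the derived-algebra structures requires carefully using left-adjointability and the colimit-preservation axiom (2) in the definition of $\thetaCat$. As a fallback, we would instead deduce (2) from the equivalence in (1) via the universal property of Proposition \ref{prop: module cat adjunction}, arguing with Yoneda on mapping spaces from $\Mod_B(\CC_1)$ and $\CC_2$ into an arbitrary $\Theta$-category under $\CC_1$.
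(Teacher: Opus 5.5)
\textbf{Step (1) is correct and is the whole proof; step (2) is unnecessary and, as written, wrong.}

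Given conservativity, your step (1) already finishes the argument. The paper gives no proof of its own; it cites \cite{NT25}. The intended argument stops after (1). Lemma \ref{lemma: theta-categories have small limits} says the forgetful functor $\thetaCat\to\PrL$ is conservative. That means a $\Theta$-functor whose underlying functor is an equivalence is already an equivalence of $\Theta$-categories. Your claim that conservativity is not enough, so that the $\DAlg$-square must be checked separately, misreads that lemma.

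For (1) itself there is a cleaner packaging. Let $\tilde G\colon \CC_2\to\Mod_B(\CC_1)$ send $y$ to $G(y)$ with its lax-monoidal $B$-module structure. It is right adjoint to the counit and commutes with the monadic forgetful functors to $\CC_1$. By the Barr--Beck--Lurie comparison criterion (cf.\ \cite[\S4.7.3]{HA}), $\tilde G$ is an equivalence once the induced map of monads $B\otimes x\to GF(x)$ is invertible. That map is exactly the projection formula with $y=\one_{\CC_2}$. This removes the need to match monad structures by hand.

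Step (2) is therefore superfluous, and the computation you give for it contains genuine errors:
\begin{enumerate}
\item For a $B$-module $N$ there is no canonical map $M(B)\to M(N)$. So the pushout $B\sqcup_{M(B)}M(N)$ is not defined, and it is not the free derived $B$-algebra on $N$. For $N=B\otimes x$ the free algebra is $B\otimes M(x)$; for general $N$ one must resolve $N$ by free $B$-modules.
\item The identification $M(B\otimes x)\simeq M(B)\otimes M(x)$ is not an instance of the coproduct formula. That formula concerns $M(a\oplus b)$, not $M(a\otimes b)$.
\item Your justification that $G$ preserves sifted colimits is also off. Monadicity and the projection formula do not give this by themselves, although it does follow after (1).
\end{enumerate}

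If you want a direct check of (2), it takes one line and is essentially the proof of the conservativity lemma. By the left-adjointability in Remark \ref{remark: morphisms of theta cats}, together with $\epsilon$ being symmetric monoidal, the left adjoint of $u_*\colon\DAlg(\CC_2)\to\DAlg(\CC_1)_{B/}$ is computed on underlying objects by $\epsilon$. So under the conservative forgetful functors, the unit and counit of this adjunction are carried to the invertible unit and counit from (1). Hence they are invertible. The right fix is to drop (2) and the Yoneda fallback, and conclude from (1) by conservativity.
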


\subsection{Localizations in $\Theta$-categories}\label{ssec: localization}

\begin{definition}\label{def: localizations in thetacats}
    Let $\CC$ be a $\Theta$-category. 
    An object $A \in \DAlg(\CC)$ is a \emph{localization} if the unit map $\one_\CC \to A$ is an epimorphism.
\end{definition}
Equivalently, $A$ is a localization if the coproduct map $A \otimes A \to A$ is an equivalence.

\begin{proposition}\label{prop: preservation of epimorphisms}
    Let $U: \DAlg(\CC) \to \CAlg(\CC)$ be the forgetful functor with left adjoint $L: \CAlg(\CC) \to\DAlg(\CC)$.
    \begin{enumerate}
        \item $A \in \DAlg(\CC)$ is a localization if and only if $U(A) \in \CAlg(\CC)$ is.
        \item If $B \in \CAlg(\CC)$ is a localization, then $L(B) \in \DAlg(\CC)$ is a localization.
        \item If $A \in \DAlg(\CC)$ is a localization, then the counit map $LU(A) \to A$ is an equivalence. 
        Thus for any $B \in \DAlg(\CC)$, the map
        \[
            U: \Maps_{\DAlg(\CC)}(A,B) \to \Maps_{\CAlg(\CC)}(UA,UB)
        \]
        is an equivalence.
        \item $U$ induces a fully faithful functor 
        \[U: \{ \text{localizations in }\DAlg(\CC)\} \to \{\text{localizations in }\CAlg(\CC)\}.\]
    \end{enumerate}
\end{proposition}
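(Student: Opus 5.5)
The whole argument rests on the fact that the forgetful functor $U$ from $\DAlg(\CC)$ to $\CAlg(\CC)$ commutes with arbitrary colimits. This is condition (2) in the definition of a $\Theta$-category. In particular $U$ preserves pushouts, so for $A \in \DAlg(\CC)$ we have $U(A \otimes A) \simeq UA \otimes UA$. We also have $U(\one) \simeq \one$, since $U$ preserves the initial object. Here $\otimes$ means the coproduct, in $\DAlg(\CC)$ and in $\CAlg(\CC)$ respectively. Note that in $\CAlg(\CC)$ this coproduct is the tensor product.

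\textbf{Part (1).} I use the criterion that $A$ is a localization if and only if the fold map $A \otimes A \to A$ is an equivalence. This is the standard characterization of epimorphisms out of the initial object: $\one \to A$ is an epimorphism exactly when the codiagonal of its pushout is an equivalence. Applying $U$ to the fold map in $\DAlg(\CC)$ gives the fold map $UA \otimes UA \to UA$ in $\CAlg(\CC)$, because $U$ preserves coproducts. Since $U$ is conservative (it is monadic), one fold map is an equivalence exactly when the other is. This proves (1).

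\textbf{Part (2).} $L$ is a left adjoint, so it preserves pushouts and the initial object. Hence $L$ carries the fold map of $B$ to the fold map of $L(B)$. If the first is an equivalence, so is the second.

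\textbf{Part (3).} Let $A$ be a localization. By (1), $UA$ is a localization in $\CAlg(\CC)$. The key claim is that for a localization $E$ in any category with pushouts, and any object $C$, the space $\Maps(E,C)$ is either empty or contractible. Indeed, the map $\Maps(E,C) \to \Maps(\one,C) \simeq \ast$ is a monomorphism because $\one \to E$ is an epimorphism.

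So in both categories, mapping spaces out of $A$ (respectively $UA$) are $(-1)$-truncated. Hence
\[
U: \Maps_{\DAlg}(A,B) \to \Maps_{\CAlg}(UA,UB)
\]
is a map of $(-1)$-truncated spaces. It is therefore an equivalence as soon as the target being nonempty forces the source to be nonempty.

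To get this, I first prove the counit statement. By (2), $LUA$ is a localization. Applying $U$ to the counit $\epsilon: LUA \to A$ and precomposing with the unit $\eta: UA \to ULUA$ gives the identity of $UA$ (triangle identity). I also want $\eta$ to be an equivalence. The map $U\epsilon$ goes between localizations of $\CAlg(\CC)$, and $U\epsilon \circ \eta = \mathrm{id}$. Consider the composite $\eta \circ U\epsilon$, an endomorphism of $ULUA$. Since $ULUA$ is a localization, its endomorphism space is contractible, so this composite is the identity. Thus $U\epsilon$ is an equivalence, and by conservativity of $U$, $\epsilon$ is an equivalence.

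Now suppose given $g: UA \to UB$. Its adjoint $\tilde g: LUA \to B$, precomposed with $\epsilon^{-1}$, gives a map $A \to B$. So the source is nonempty whenever the target is, and the displayed map is an equivalence.

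\textbf{Part (4).} Full faithfulness is now exactly the displayed equivalence in (3), and $U$ lands in localizations by (1).

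\textbf{Main obstacle.} The only delicate point is justifying that $U$ preserves the relevant pushouts and the initial object, that is, the coproduct $A \otimes A$ and $\one$. This is exactly condition (2) in the definition of $\Theta$-category, so I expect no real difficulty; everything else is formal.
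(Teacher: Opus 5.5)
Your proposal is correct and follows essentially the same route as the paper. For (1) and (2) you use the fold map together with colimit-preservation of $U$ and $L$. For (3) you apply the triangle identity to $U$ of the counit and use that $ULU(A)$ is a localization (so its endomorphisms under the unit are trivial), which makes $U$ of the counit an equivalence; conservativity of $U$ then finishes the argument. Your explicit $(-1)$-truncatedness argument for the mapping-space equivalence fills in the step the paper leaves as ``Thus''.
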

\begin{proof}
\begin{enumerate}
    \item 
    By definition of $\Theta$-category, the forgetful functor $U$ commutes with finite colimits. Further, $U$ is conservative.
    Thus, if $A \in \DAlg(\CC)$, $A \otimes A \to A$ is an equivalence if and only if 
    \[ U(A) \otimes U(A) \simeq U(A \otimes A) \to U(A)\]
    is an equivalence.

    \item
    If $B \in \CAlg(\CC)$ is a localization, then 
    \[ L(B) \otimes L(B) \simeq L(B \otimes B) \to L(B)\]
    is an equivalence, since left adjoints preserve colimits.

    \item 
    We must show that if $A \in \DAlg(\CC)$ is a localization, then the counit map $\eta_A: LU(A) \to A$ is an equivalence.
    After applying $U$, the composite
    \[ U(A) \to ULU(A) \overset{U(\eta_A)}\to U(A)\]
    is equivalent to the identity of $U(A)$.
    Since $\one_{\CC} \to ULU(A)$ is an epimorphism, this implies that $ULU(A) \to U(A) \to ULU(A)$ is also equivalent to the identity.
    Hence $U(\eta_A)$ is an equivalence.
    Since $U$ is conservative, $\eta_A$ is an equivalence.

    \item follows from (3).
    \end{enumerate}
\end{proof}

The functor 
\[U: \{ \text{localizations in }\DAlg(\CC)\} \to \{\text{localizations in }\CAlg(\CC)\}\]
in Proposition \ref{prop: preservation of epimorphisms} need not be essentially surjective.

\begin{example}\label{ex: einfty localization but not derived}
    Let $A$ be the derived algebra $C^*(BC_2, \mathbb{F}_2)$ of $\mathbb{F}_2$-valued cochains on the classifying space 
    of the cyclic group of order 2. 
    Then $A$ is a derived commutative ring. Let $\CC = \Mod_A(\ModL_{\ZZ})$. Then a localization in $\DAlg(\CC)$ is an epimorphism of derived commutative algebras $A \to B$.
    Now 
    \[ H^\bullet(A) = \mathbb{F}_2[t]\]
    where $t$ is in cohomological degree 1.
    Let $S$ be the multiplicative set $\{1,t,t^2,...\} \subseteq H^\bullet(A)$.
    Then there exists an $\Einfty$-localization $B = S^{-1}U(A)$ of $U(A)$ at $S$; by \cite[Proposition 7.2.3.20]{HA}, one has 
    $H^\bullet(B) = \mathbb{F}_2[t,t^{-1}]$.
    However, $B$ cannot be the underlying $\Einfty$-ring of a derived commutative ring: the square of an element in cohomological degree $-1$ of a derived commutative ring must be zero, as the free such $\ZZ$-algebra is $\ZZ \oplus \ZZ[1]$ by a result of Illusie \cite[Chapitre I, 4.3.2]{CotangentComplexI}.
    On the other hand, $t^{-2} \neq 0$.
    
    This example was suggested by ChatGPT 5.6 Sol and verified by the authors.
\end{example}

In §\ref{ssec: classification of qaff}, we will study localizations in $\DAlg$ which are also compact.
To be compact, it is sufficient for the underlying $\Einfty$-algebra to be compact:

\begin{lemma}\label{lem: detecting compacts in dalg}
    Let $\CC$ be a $\Theta$-category. Suppose $A \in \DAlg(\CC)$ is a localization.
    If $U(A)$ is compact in $\CAlg(\CC)$,
    then $A$ is compact in $\DAlg(\CC)$.
\end{lemma}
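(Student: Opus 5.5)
The plan is to compute $\Maps_{\DAlg(\CC)}(A,-)$ directly and show that it commutes with filtered colimits. By Proposition \ref{prop: preservation of epimorphisms}(3), since $A$ is a localization, for every $B \in \DAlg(\CC)$ the map
\[ U: \Maps_{\DAlg(\CC)}(A,B) \to \Maps_{\CAlg(\CC)}(UA,UB) \]
is an equivalence, naturally in $B$. So $\Maps_{\DAlg(\CC)}(A,-) \simeq \Maps_{\CAlg(\CC)}(UA,-)\circ U$ as functors $\DAlg(\CC)\to\Spc$.

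It then suffices to check two things.

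\begin{enumerate}
    \item \emph{$U$ preserves filtered colimits.} By definition of a $\Theta$-category, the functor $\DAlg(\CC)\to\CAlg(\CC)$ commutes with arbitrary colimits, in particular filtered ones.
    \item \emph{$\Maps_{\CAlg(\CC)}(UA,-)$ preserves filtered colimits.} This is exactly the hypothesis that $UA$ is compact in $\CAlg(\CC)$.
\end{enumerate}

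Given a filtered diagram $B_i$ in $\DAlg(\CC)$, we then have the chain
\[ \operatorname{colim}_i \Maps(A,B_i)\simeq \operatorname{colim}_i\Maps(UA,UB_i)\simeq \Maps(UA,\operatorname{colim}_i UB_i)\simeq \Maps(UA,U\operatorname{colim}_i B_i)\simeq \Maps(A,\operatorname{colim}_i B_i).\]
We must also confirm that this composite is the canonical comparison map. That follows from the naturality of the equivalence in Proposition \ref{prop: preservation of epimorphisms}(3) in $B$.

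The main point needing care is precisely that naturality. The equivalence in (3) is induced by the functor $U$ itself on mapping spaces, or equivalently by precomposition with the counit $LUA\simeq A$ via the adjunction $L\dashv U$. Either description is manifestly natural in $B$, so no real obstacle arises.
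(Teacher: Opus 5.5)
Your proposal is correct and is essentially the paper's argument. The paper phrases it as ``$LU(A)$ is compact because $U$ preserves filtered colimits, and $LU(A)\to A$ is an equivalence by Proposition~\ref{prop: preservation of epimorphisms}(3),'' while you unpack the same adjunction at the level of mapping spaces.
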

\begin{proof}
    By definition of $\Theta$-category, $U: \DAlg(\CC) \to \CAlg(\CC)$ commutes with arbitrary colimits, hence filtered ones.
    Thus $LU(A)$ is compact in $\DAlg(\CC)$.
    Since $A$ is a localization, Proposition \ref{prop: preservation of epimorphisms} shows $LU(A) \to A$ is an equivalence. 
\end{proof}

Note that compact objects are different in $\DAlg$ and $\CAlg$ in general: 

\begin{example}
    The algebra $\LSym_{\Fp}(\Fp)=\Fp[x]$ is compact in $\DAlg_{\Fp}$ but not in $\CAlg_{\Fp}$. By \cite[Theorem 7.4.3.18]{HA}, it suffices to show that the $\Einfty$-cotangent complex $L^{\Einfty}_{\Fp[x]/\Fp}$ is not a perfect $\Fp[x]$-module.
    By \cite[Proposition 3.2]{RR04}, 
    \[L^{\Einfty}_{\Fp[x]/\Fp} \tensor{\Fp[x]} \Fp
    \simeq \ZZ\tensor{\mathbb{S}} \Fp \] where $\mathbb{S}$ is the sphere spectrum. It is well known that the right-hand side has infinitely generated homotopy groups over $\Fp$\footnote{This phenomenon witnesses the non-smoothness of the `flat' affine line over $\Fp$ in spectral algebraic geometry.}.
\end{example}

\subsection{The $\Theta$-category of quasi-coherent sheaves on a prestack}

The usual symmetric monoidal category $\QCoh(X)$ of quasi-coherent sheaves on a (derived) prestack $X$ has a natural refinement to a $\Theta$-category, constructed in \cite[Remark 2.3]{NT25}. We recall its construction here.

Recall from Example \ref{ex: theta-category of Z-modules} that $(\Mod_\ZZ, \LSym_{\ZZ})$ is a $\Theta$-category.
By Proposition \ref{prop: module cat adjunction},
the functor
\[ \DAlg(\ZZ) \to \thetaCat_{\Mod_\ZZ /}\]
assigning $A$ to the module $\Theta$-category $\ModL_A := \Mod_A(\Mod_{\ZZ}^{\LSym_{\ZZ}})$ is fully faithful.
Restricting to animated rings, we obtain a functor
\[ \QCoh^{\LSym}: \Aff^{op} \to \thetaCat_{\Mod_{\ZZ}/}\]
satisfying 
\begin{enumerate}
  \item the underlying symmetric monoidal category of $\QCoh^{\LSym}(\Spec A)$ is $\Mod_A$;
  \item the monad $M$ on $\QCoh^{\LSym}(\Spec A)$ is $\LSym_A$, the right-left extension of $\LSym$ from $\Mod_A^{\leq 0}$;
  \item the functor $\QCoh^{\LSym}: \Aff^{op} \to \thetaCat_{\Mod_{\ZZ}/}$ is a fully faithful left adjoint.
\end{enumerate}

Since $\thetaCat$ has small limits, the right Kan extension from affine schemes to (accessible) prestacks
\[ \QCoh^{\LSym}: \PreStk \to \thetaCat_{\Mod_\ZZ/}\]
exists and sends colimits to limits.
By Lemma \ref{lemma: theta-categories have small limits}, $\thetaCat \to \CAlg(\PrL)$ preserves limits, so the symmetric monoidal category underlying $\QCoh^{\LSym}(X)$ is naturally identified with $\QCoh(X)$ as defined in \cite[Chapter 1, §1.1]{GR17I}.
Write $\DAlg(X) := \DAlg(\QCohL(X))$.

As in \cite[Chapter 1, 1.5.1]{GR17I}, the underlying category $\QCoh$ carries a $t$-structure defined by declaring that $F \in \QCoh(X)^{\leq 0}$ if and only if whenever $x: S \to X$ for $S \in \Aff$, we have $x^*F \in \QCoh(S)^{\leq 0}$. By construction, the monad $\LSym$ takes $\QCoh(X)^{\leq 0}$ into $\QCoh(X)^{\leq 0}$.

\begin{lemma}\label{lem: flat descent for dalg}
    The $\mathrm{Cat}^\Theta$-valued functor on animated rings $A\mapsto \ModL_A$ satisfies flat descent.
\end{lemma}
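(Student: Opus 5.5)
By Lemma \ref{lemma: theta-categories have small limits}, limits in $\thetaCat$ are computed by the forgetful functor to $\CAlg(\PrL)$, and this functor is conservative. So the plan is to reduce flat descent for $A\mapsto \ModL_A$ to two known descent statements: descent for the underlying symmetric monoidal categories, and descent for the categories of derived algebras.

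Fix a faithfully flat map $A\to B$ of animated rings with Čech conormal $B^\bullet$, where $B^n = B^{\otimes_A (n+1)}$. Let $\mathcal{L}$ be the limit of the cosimplicial diagram $\ModL_{B^\bullet}$ in $\thetaCat$. The comparison map $\ModL_A\to\mathcal{L}$ induces the map
\[\Mod_A \to \lim \Mod_{B^\bullet}\]
on underlying objects of $\CAlg(\PrL)$, because the forgetful functor preserves limits. This map is an equivalence by fpqc descent for modules over connective $\Einfty$-rings \cite{Lurie-SAG}. Since the forgetful functor is also conservative, the comparison map $\ModL_A\to \mathcal{L}$ is an equivalence.

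One point deserves care: conservativity has to be used for the actual limit-comparison morphism in $\thetaCat$. The $\DAlg$-parts of that morphism are automatically compatible, because a $\Theta$-functor whose underlying symmetric monoidal functor is an equivalence is an equivalence. So no separate descent statement for $\DAlg$ is logically needed.

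As a consistency check, and because I expect the hardest point to be justifying the limit-preservation claim concretely, I would also verify the $\DAlg$-level statement directly. Unwinding the definitions, it asks that
\[\DAlg_A \to \lim \DAlg_{B^\bullet}\]
be an equivalence, where the transition functors are the left adjoints $B^n\otimes_{B^m}(-)$. This follows from the square in Remark \ref{remark: morphisms of theta cats}. The forgetful functor to modules is conservative and commutes with these base changes. Flat base change commutes with $\LSym$ on connective objects, and hence everywhere by right-left extension. Therefore Lurie's Beck--Chevalley descent criterion \cite[Corollary 4.7.5.3]{HA} reduces the $\DAlg$ statement to module descent.

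If Lemma \ref{lemma: theta-categories have small limits} only guarantees limits in the underlying 1-category, this direct check supplies the missing identification of $\DAlg(\mathcal{L})$.
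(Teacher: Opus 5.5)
Your proposal is correct and is essentially the paper's proof. The paper likewise uses that the forgetful functor out of $\thetaCat$ is conservative and preserves limits (it forgets to $\PrL$ where you use $\CAlg(\PrL)$, which makes no difference) and reduces to flat descent for $R \mapsto \Mod_R$; your extra Beck--Chevalley check of $\DAlg$-descent is a valid consistency check but not needed.
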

\begin{proof}
    By Lemma \ref{lemma: theta-categories have small limits}, the functor $\Cat^\Theta \to \PrL$ sending a $\Theta$-category to its underlying category is conservative and preserves limits. Thus, flat descent follows from flat descent for $R \mapsto \Mod_R$ \cite[Corollary D.6.3.3]{Lurie-SAG}.
\end{proof}

\begin{corollary}\label{cor: flat descent for QCohL}
    Let $f: Y\to X$ be an epimorphism of stacks in the fpqc topology.
    Then $f^*$ induces an equivalence of $\Theta$-categories 
    \[\QCohL(X)\overset{\sim}{\to}\mathrm{Tot}(\QCohL(Y^\bullet))\] where $Y^\bullet$ is the \v{C}ech nerve of $Y\to X$.
\end{corollary}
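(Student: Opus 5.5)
The plan is to reduce the statement to the flat descent of Lemma \ref{lem: flat descent for dalg} by writing both $\QCohL(X)$ and $\QCohL(Y^\bullet)$ as limits over affines, and then interchanging limits. Since $\QCohL$ on $\PreStk$ is the right Kan extension from $\Aff$, it sends colimits of prestacks to limits in $\thetaCat_{\Mod_\ZZ/}$. By Lemma \ref{lemma: theta-categories have small limits} the forgetful functor $\thetaCat \to \PrL$ is conservative and preserves limits. It therefore suffices to prove that the comparison map $\QCoh(X) \to \mathrm{Tot}(\QCoh(Y^\bullet))$ is an equivalence of underlying presentable categories, although one can equally work in $\thetaCat$ throughout.

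First I would reduce to $X$ affine. Write $X \simeq \colim_{S \to X} S$ over affines $S=\Spec A$ mapping to $X$; since prestacks are accessible, this colimit can be taken over a small category. Pullback along $S \to X$ commutes with \v{C}ech nerves, because colimits in $\PreStk$ are universal and fiber products commute with colimits. Hence $Y^\bullet \simeq \colim_S (Y\times_X S)^\bullet$ levelwise, and
\[
\mathrm{Tot}\,\QCohL(Y^\bullet) \simeq \lim_S \mathrm{Tot}\,\QCohL\big((Y\times_X S)^\bullet\big)
\]
by commuting limits with limits. The map in question becomes the limit over $S$ of the corresponding maps for $Y_S := Y\times_X S \to S$. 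Each $Y_S \to S$ is still an fpqc epimorphism, so we may assume $X=\Spec A$.

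Next, with $X = \Spec A$, an epimorphism of fpqc stacks $Y \to \Spec A$ means the identity of $\Spec A$ lifts fpqc-locally. So there is a faithfully flat $A \to B$ with $\Spec B \to \Spec A$ factoring through $Y$; one may arrange $B$ to be a single algebra, using that fpqc covers of an affine can be refined by a finite family and then taking the product. Let $Y' = \Spec B$. I would then compare the \v{C}ech nerves of $Y' \to X$ and of $Y \to X$ via the bisimplicial object $Y'^{\bullet}\times_X Y^{\bullet}$. Each row and column is a \v{C}ech nerve of a map admitting a section after base change, for instance $Y'^{n}\times_X Y \to Y'^n$ has a section because $Y' \to X$ factors through $Y$. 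A \v{C}ech nerve with a section is split, so its totalization of $\QCohL$ recovers the base. Combined with Lemma \ref{lem: flat descent for dalg} applied levelwise to the affine faithfully flat cover $\Spec B \to \Spec A$, this gives
\[
\QCohL(\Spec A) \simeq \mathrm{Tot}\,\QCohL(Y'^\bullet) \simeq \mathrm{Tot}\,\mathrm{Tot}\,\QCohL(Y'^\bullet\times_X Y^\bullet) \simeq \mathrm{Tot}\,\QCohL(Y^\bullet).
\]
For the last step, along the other direction each $Y^m\times_X Y'^\bullet \to Y^m$ is the base change of an affine faithfully flat cover. Writing $Y^m$ as a colimit of affines and using the previous reduction, descent holds levelwise.

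The main obstacle is this last point: showing descent for $\QCoh$ along the base change of the affine fpqc cover $Y' \to X$ to the arbitrary prestacks $Y^m$. It requires the reduction-to-affines argument to be applied uniformly, which is where universality of colimits and the limit-preservation of $\QCohL$ are essential. A second, more technical, issue is the set-theoretic care in using fpqc (rather than fppf) covers and accessibility of the prestacks involved.
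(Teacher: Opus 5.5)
Your argument is correct, but it takes a different route from the paper. The paper's proof is two lines. It combines the affine flat descent of Lemma \ref{lem: flat descent for dalg} with a general formal result, \cite[Proposition 3.48]{Yaylali2022NotesDAG}. Roughly, that result says that if a functor on affines satisfies descent for a topology, then its right Kan extension to prestacks satisfies descent along \v{C}ech nerves of epimorphisms of stacks for that topology.

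In effect you prove that general fact by hand in the fpqc case, in three steps:
\begin{enumerate}
\item You localize on the base, using universality of colimits and the fact that $\QCohL$ turns colimits into limits.
\item Over an affine base, you refine the epimorphism by a single faithfully flat $\Spec B \to Y$.
\item You compare the two \v{C}ech nerves through the augmented bisimplicial object $Y'^{\bullet}\times_X Y^{\bullet}$. One direction is handled by split \v{C}ech nerves; this is exactly where the epimorphism hypothesis enters, through the section. The other direction is handled by affine descent after a second localization on the base.
\end{enumerate}

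The cited route is shorter and works verbatim for any topology and any target category with limits. It also leaves the set-theoretic care to the reference. Yours is self-contained and shows exactly which inputs are used, but it relies on special features of the fpqc topology (finite refinement of covers) and needs more bookkeeping.

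Two small points to tidy up:
\begin{enumerate}
\item If $Y$ is only a prestack rather than a sheaf, use $\coprod_i \Spec B_i$ in place of $\Spec \prod_i B_i$. Nothing changes, since $\QCohL$ takes finite coproducts to products and $\Mod_{\prod_i B_i}\simeq\prod_i\Mod_{B_i}$.
\item To see that your chain of equivalences is the map induced by $f^*$, note that the comparison maps form a commutative square through the augmented bisimplicial object, and apply two-out-of-three.
\end{enumerate}
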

\begin{proof}
    This is a consequence of Lemma \ref{lem: flat descent for dalg} and \cite[Proposition 3.48]{Yaylali2022NotesDAG}.
\end{proof}

\section{Quasi-affine morphisms}\label{section: quasi-affine}

\subsection{Properties of quasi-affine morphisms}

\begin{definition}
  A derived scheme $X$ is \emph{quasi-affine} if it is quasi-compact and there exists an open immersion $j: X \to \Spec A$ into a derived affine scheme.

  A morphism of prestacks $X \to Y$ is \emph{quasi-affine} if for every affine scheme $S$ mapping to $Y$, the base change $X \times_Y S$ is a quasi-affine scheme.
\end{definition}

\begin{lemma}\label{lemma: composition of quasi-affine morphisms}
    Let $Z$ be a prestack. If $Y \to Z$ and $X \to Y$ are quasi-affine morphisms, then so is $X \to Z$.
\end{lemma}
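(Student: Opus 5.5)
The definition of quasi-affine morphism is checked after base change to affines, so the plan is to reduce at once to the affine case. Let $S = \Spec R \to Z$ be a map from an affine scheme. Then $Y_S := Y \times_Z S$ is a quasi-affine scheme, and $X_S := X \times_Z S \simeq X \times_Y Y_S$. It therefore suffices to prove the following: if $U$ is a quasi-affine derived scheme and $X' \to U$ is a quasi-affine morphism, then $X'$ is a quasi-affine scheme. Quasi-affineness of $X' \to U$ is a statement about base changes along all affines mapping to $U$, so it is compatible with this reduction.

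We fix an open immersion $j\colon U \hookrightarrow \Spec A$ with $U$ quasi-compact. The first step is to cover $U$ by finitely many basic opens $\Spec A_{f_i}$ with $f_i \in \pi_0 A$. Each such open is affine, and these are exactly the opens of $\Spec A$ contained in $U$ of this form. For each $i$, the base change $X'_i := X' \times_U \Spec A_{f_i}$ is a quasi-affine scheme, hence quasi-compact. It follows that $X'$ is a derived scheme, being glued from the finitely many schemes $X'_i$ along opens, and that it is quasi-compact.

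The main step is to produce an open immersion of $X'$ into an affine scheme. Following the classical argument (EGA II, 5.1.2), we set $B := \Gamma(X', \mathcal{O}_{X'})$, an animated ring, and consider the canonical map $X' \to \Spec B$. We use the characterization that a quasi-compact (quasi-separated) derived scheme $W$ is quasi-affine if and only if $W \to \Spec \Gamma(W,\mathcal{O}_W)$ is an open immersion. This holds if and only if, for $s \in \pi_0\Gamma(W,\mathcal O_W)$ ranging over enough sections, the loci $W_s$ are affine with $\Gamma(W_s,\mathcal O)=\Gamma(W,\mathcal O)[s^{-1}]$ and cover $W$. This reduces to the underlying classical scheme $\pi_0$ together with a check on the homotopy sheaves.

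We take as sections the images $f_i \in \pi_0 A \to \pi_0 B$ together with the sections of $X'_i$ exhibiting it as quasi-affine. Here we must check that $\Gamma(X'_i,\mathcal{O}) \simeq B[f_i^{-1}]$. This follows because $X'$ is quasi-compact and quasi-separated, so global sections commute with localization at an element of $\pi_0$: this is flat base change for the quasi-compact quasi-separated morphism $X' \to \Spec A$. Sections $g \in \pi_0 \Gamma(X'_i, \mathcal O)$ can be lifted, after multiplying by a power of $f_i$, to $\pi_0 B$, with $X'_{f_i g} = (X'_i)_g$. The loci $X'_{f_i g}$ are then affine with the correct ring of functions and cover $X'$, so $X' \to \Spec B$ is an open immersion.

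The expected obstacle is this last step. It requires the derived version of the criterion ``$W$ is quasi-affine iff $W\to\Spec\Gamma(W,\mathcal O_W)$ is an open immersion'' and the compatibility of global sections with localization. I would first try to verify both by reducing to the classical truncation $\pi_0$ and using that open immersions and affineness of derived schemes are detected on the underlying classical schemes, together with the fact that $\pi_n$ of the structure sheaves are quasi-coherent. If this proves cumbersome, I would instead cite Lurie's treatment of quasi-affine spectral/derived schemes (SAG, §2.4), where composition of quasi-affine morphisms is established.
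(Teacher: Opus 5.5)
Your argument is correct in substance, but it takes a different route from the paper. The paper gives no argument of its own: it only asserts that the proof of \cite[Lemma 2.5.7.2]{Lurie-SAG} carries over to the animated setting. You instead write out a self-contained EGA II 5.1.2--style proof:
\begin{enumerate}
\item reduce to $X'\to U$ with $U\subset \Spec A$ quasi-affine, and cover $U$ by finitely many basic opens $D(f_i)$;
\item use flat base change for the qcqs morphism $X'\to\Spec A$ to get $\Gamma(X'_i,\mathcal{O})\simeq\Gamma(X',\mathcal{O})[f_i^{-1}]$;
\item lift the sections exhibiting each $X'_i$ as quasi-affine;
\item apply the affine-cover criterion.
\end{enumerate}
The paper's route costs one line but leaves the reader to check that a spectral argument survives. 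Yours makes the derived adaptation explicit. It needs only flat base change and the \emph{sufficiency} half of the criterion. You never need the converse, since for $X'_i$ the required sections can be taken to be pullbacks of basic opens of an ambient affine scheme. Your fallback of citing SAG is exactly what the paper does.

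One point must be corrected. $B=\Gamma(X',\mathcal{O}_{X'})$ is in general \emph{not} an animated ring, because quasi-affine schemes can have higher coherent cohomology (e.g.\ the punctured affine plane). So $\Spec B$ and $\pi_0B$ should be replaced by $\Spec\tau^{\leq 0}B$ and $H^0(B)$, as in Lemma~\ref{lemma: global quasi-affine into global sections}(2). With this change the criterion goes through directly, and you do not need to pass to classical truncations as your last paragraph proposes. Take $s\in H^0(B)$ with $X'_s$ affine. Then $B[s^{-1}]\simeq\Gamma(X'_s,\mathcal{O})$ is connective. Localization is $t$-exact, so this forces $(\tau^{\leq 0}B)[s^{-1}]\simeq B[s^{-1}]$, and hence $X'_s\to D(s)\subset\Spec\tau^{\leq 0}B$ is an equivalence. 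Since $X'_s$ is exactly the preimage of $D(s)$, and finitely many such loci cover $X'$, the map $X'\to\Spec\tau^{\leq 0}B$ is a quasi-compact open immersion.
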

\begin{proof}
    The proof of \cite[Lemma 2.5.7.2]{Lurie-SAG} works.
\end{proof}

When $f: X \to Y$ is quasi-affine, the category of quasi-coherent sheaves on $X$ is equivalent to modules over $f_*\OO_X$:
\begin{proposition}
\label{prop: QCohL on quasi-affine morphism}
    Let $f: X \to Y$ be a quasi-affine morphism of prestacks.
    \begin{enumerate}
        \item $f_*: \QCoh(X) \to \QCoh(Y)$ is monadic and induces an equivalence of categories
        \[ \QCoh(X) \overset{\sim}{\to} \Mod_{f_*\OO_X}(\QCoh(Y)).\]
        \item $f^*$ induces an equivalence of $\Theta$-categories 
        \[ \Mod_{f_*\OO_X}(\QCohL(Y)) \overset{\sim}{\to} \QCohL(X)\]
        under $\QCohL(Y)$.
    \end{enumerate}
\end{proposition}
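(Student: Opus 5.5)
Part (2) reduces to part (1) via Proposition \ref{prop: recognition of module category}. That proposition needs two inputs for the $\Theta$-functor $f^*\colon \QCohL(Y) \to \QCohL(X)$: on underlying categories the right adjoint $f_*$ must be monadic, and $(f^*, f_*)$ must satisfy the projection formula. Given both, the counit $\Mod_{f_*\OO_X}(\QCohL(Y)) \to \QCohL(X)$ is an equivalence of $\Theta$-categories under $\QCohL(Y)$, which is statement (2). Part (1) then follows from Barr--Beck--Lurie, together with the observation that once the projection formula holds, the monad $f_*f^*$ is $f_*\OO_X \otimes (-)$. So the work is to prove monadicity and the projection formula for a quasi-affine $f$.

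First I reduce to an affine target. Write $Y = \operatorname{colim}_{S \to Y} S$ over affines, so that $\QCoh(Y) = \lim_S \QCoh(S)$ and $\QCoh(X) = \lim_S \QCoh(X_S)$ with $X_S := X \times_Y S$. Quasi-affine morphisms are stable under base change, and for any $g\colon S' \to S$ the base-change map $g^* f_{S*} \to f_{S'*} g'^*$ is an equivalence because $X_S$ is a quasi-compact, quasi-separated derived scheme. Hence $f_*$ is computed termwise, and the projection formula and conservativity can be checked after pullback to each affine $S$. For monadicity I plan to check conservativity and preservation of $f_*$-split geometric realizations. Since $f_*$ is computed termwise and each $f_{S*}$ preserves all colimits (qcqs), $f_*$ preserves all colimits; it is also conservative if each $f_{S*}$ is. Thus everything reduces to $Y = \Spec A$ affine and $X$ a quasi-affine derived scheme.

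In the affine case, choose a quasi-compact open immersion $j\colon X \to \Spec B$ and factor $f$ as $X \xrightarrow{j} \Spec B \to \Spec A$. The pushforward along the affine map is $\Mod_B \to \Mod_A$, which is conservative, preserves colimits, and satisfies the projection formula. So it suffices to treat $j_*$. The functor $j_*\colon \QCoh(X) \to \Mod_B$ is fully faithful, since $j^*j_* \simeq \mathrm{id}$ for an open immersion, and hence conservative. It preserves colimits by quasi-compactness. The projection formula $M \otimes_B j_*F \simeq j_*(j^*M \otimes F)$ holds for $M = B$, and both sides preserve colimits in $M$, so it holds for all $M$. Composing the two factors gives conservativity, colimit-preservation and the projection formula for $f_*$, and Barr--Beck--Lurie then yields monadicity.

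The step I expect to be the main obstacle is the reduction from a general prestack $Y$ to the affine case. It needs base change for $f_*$ along arbitrary maps of affines, so that $f_*$ genuinely commutes with the limit description of $\QCoh(Y)$. I would establish this by Čech-covering $X_S$ by finitely many affine opens and using flat base change on each piece, which is the standard argument for qcqs derived schemes.
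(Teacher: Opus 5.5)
Your proposal is correct and takes the same route as the paper: both establish monadicity of $f_*$ and the projection formula and then apply Proposition~\ref{prop: recognition of module category}. The paper simply cites Gaitsgory--Rozenblyum for these inputs (and for base change along quasi-compact schematic maps), whereas you sketch them by reducing to an affine target.

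One small repair is needed. For an arbitrary open immersion $j\colon X \to \Spec B$, the map $f$ need not factor through $\Spec B$; for example, take $X = D(t)\subset \Spec \ZZ[t]$ with $f=\mathrm{id}_X$. You can either take $B=\tau^{\leq 0}\Gamma(X,\OO_X)$, or drop the factorization altogether. After composing with the forgetful functors to $\Mod_{\ZZ}$, both $f_*$ and $j_*$ become $\Gamma(X,-)$. That already gives conservativity and colimit-preservation of $f_*$, and your generation argument then yields the projection formula.
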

\begin{proof}
    Monadicity of $f_*$ follows from \cite[Chapter 3, Proposition 2.2.2]{GR17I}.
    The projection formula is satisfied since $f$ is quasi-compact schematic \cite[Chapter 3, 3.2.4]{GR17I}.
    The monadic equivalence $\QCoh(X) \to \Mod_{f_*\OO_X}(\QCoh(Y))$ is \cite[Chapter 3, 3.3.3]{GR17I}.
    Then Proposition \ref{prop: recognition of module category} shows the equivalence of $\Theta$-categories.
\end{proof}

If $f:X\to Z$ is quasi-affine, the derived ring $f_*\OO_X \in \DAlg(Z)$ classifies maps into $X$ over $Z$:

\begin{lemma}[\cite{luriedagviii}, Lemma 3.2.8]
\label{lemma: maps into quasi-affine morphism}
  Let $f:X \to Z$ be a quasi-affine morphism of prestacks and $g: Y\to Z$ a morphism of prestacks.
  Then the map 
  \[ \Maps_{Z}(Y,X) \to \Maps_{\DAlg(Y)}(g^*f_*\OO_X, \OO_Y)\]
  is an equivalence.
\end{lemma}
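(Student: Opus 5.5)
We reduce to the case where $Y$ is affine and then use the classification of maps into a quasi-affine scheme by the pushed-forward structure sheaf. The reduction comes first. Both sides send colimits in $Y$ (over $Z$) to limits. The left side does so because $\Maps_Z(-,X)$ is a mapping space in $\PreStk_{/Z}$. The right side does so because $Y\mapsto \QCohL(Y)$ is right Kan extended from affines, so $\DAlg(Y)=\lim_{S\to Y}\DAlg(S)$. Mapping spaces into $\OO_Y$ are then computed as limits over affines, using that $g^*f_*\OO_X$ restricts compatibly along each $S\to Y$. Since $Y$ is a colimit of affines $S\to Y$, it suffices to treat $Y=S=\Spec R$ affine with a map $g\colon S\to Z$. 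The naturality of the comparison map in $Y$ is ensured by constructing it as: pull back the map $X\to Z$ over $Y$, take $x\colon Y\to X$ to the induced morphism $g^*f_*\OO_X\to \OO_Y$ in $\DAlg(Y)$, adjoint to $f_*\OO_X\to f_*x_*\OO_Y = g_*\OO_Y$.

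For $Y=S=\Spec R$ affine, form the base change $X_S := X\times_Z S$. By definition of quasi-affine, this is a quasi-affine derived scheme with an open immersion $j\colon X_S\to \Spec A$ and quasi-compact structure map $f_S\colon X_S\to S$. Sections over $S$ are $\Maps_Z(S,X)\simeq \Maps_S(S,X_S)$. Base change for the quasi-compact schematic morphism $f$ (GR17I, Chapter 3) gives $g^*f_*\OO_X\simeq f_{S*}\OO_{X_S}=\Gamma(X_S,\OO)$ as derived $R$-algebras; the identification is compatible with the $\Theta$-structure by Proposition \ref{prop: QCohL on quasi-affine morphism}(2). So we must show $\Maps_S(S,X_S)\to \Maps_{\DAlg_R}(\Gamma(X_S,\OO),R)$ is an equivalence. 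Equivalently, with $B=\Gamma(X_S,\OO)$, we must show that $\Maps(S,X_S)\to\Maps_{\DAlg}(B,R)$ is an equivalence over $\Maps(S,S)$, where maps to $X_S$ are taken absolutely.

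The key step is the absolute statement: for a quasi-affine derived scheme $U$ with $B=\Gamma(U,\OO_U)$, the map $U\to\Spec B$ is a quasi-compact open immersion, and $\Maps(\Spec R,U)$ is the union of those components of $\Maps_{\DAlg}(B,R)$ for which the induced $\Spec R\to\Spec B$ factors through $U$. More precisely, I would show that $\Maps(\Spec R, U)\to\Maps(\Spec R,\Spec B)$ is a monomorphism with image the maps $\phi$ such that $\phi^*$ of the ideal cutting out the closed complement generates the unit ideal. The monomorphism is automatic because $U\to\Spec B$ is an open immersion. Here Lurie's argument in the spectral case [luriedagviii, Lemma 3.2.8] transfers, since the underlying statements concern ordinary schemes $\pi_0$ and open immersions, which are insensitive to $\Einfty$ versus derived structure.

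The main obstacle is the surjectivity part: given $\phi\colon B\to R$, we must show that $\Spec R\to\Spec B$ lands in $U$. The plan is to cover $U$ by finitely many basic opens $\Spec B[1/b_i]$, with $b_i\in\pi_0B$ and $U=\bigcup D(b_i)$ (possible by quasi-affineness). We then show that the $b_i$ generate the unit ideal in $\pi_0 R$ after pulling back. For this we use that $\phi$ is a ring map and that $1\in\Gamma(U,\OO)$ is glued from the local sections. Concretely, the Čech complex computing $B$ from the $B[1/b_i]$ yields, after base change along $\phi$, that $R$ is the limit of the $R[1/\phi(b_i)]$. This forces $\bigcup D(\phi(b_i))=\Spec\pi_0R$, since otherwise a residue field point outside all $D(\phi(b_i))$ would receive $R\to k$ with all $B[1/b_i]\otimes k=0$, contradicting $k\neq0$. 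The subtle point is justifying the base change of this limit along $\phi$, which is where the quasi-compactness and projection formula of Proposition \ref{prop: QCohL on quasi-affine morphism} enter.
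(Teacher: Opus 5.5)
Your reduction to $Y=\Spec R$ affine and $X$ a quasi-affine scheme matches the paper. The key step, however, has a genuine gap.

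\textbf{The gap.} $B=\Gamma(U,\OO_U)$ is in general not connective: for $U=\mathbb{A}^2\setminus\{0\}$ one has $H^1(U,\OO_U)\neq 0$. So $\Spec B$ is not an affine derived scheme, and $\Maps_{\DAlg}(B,R)$ is not $\Maps(\Spec R,\Spec B)$.

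The open immersion that actually exists is $j\colon U\hookrightarrow\Spec(\tau^{\leq 0}B)$ (Lemma~\ref{lemma: global quasi-affine into global sections}). Your ``automatic'' monomorphism therefore only shows that $\Maps(\Spec R,U)\to\Maps_{\DAlg}(\tau^{\leq 0}B,R)$ is the inclusion of the components factoring through $U$. The lemma is about $\Maps_{\DAlg}(B,R)$, and nothing in your proposal controls the restriction map
$r\colon\Maps_{\DAlg}(B,R)\to\Maps_{\DAlg}(\tau^{\leq 0}B,R)$.
This is not a statement about $\pi_0$ or underlying classical schemes, so it does not come for free from Lurie's spectral argument either.

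\textbf{What your argument does give.} Your surjectivity argument is sound even for nonconnective $B$. The \v{C}ech limit for a finite Zariski cover is a finite limit, so it commutes with $R\otimes_B-$. It shows that $r$ lands in the components coming from $\Maps(\Spec R,U)$. Hence the comparison map $a\colon\Maps(\Spec R,U)\to\Maps_{\DAlg}(B,R)$ acquires a retraction. That is not enough: to conclude that $a$ is an equivalence you need $r$ to be a monomorphism, i.e.\ that $\tau^{\leq 0}B\to B$ is an epimorphism of derived rings.

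\textbf{The missing idea.} Use the projection formula for the quasi-compact open immersion $j$ together with $j^*j_*\simeq\id$. This gives $B\otimes_{\tau^{\leq 0}B}B\simeq j_*j^*j_*\OO_U\simeq B$. Since pushouts in $\DAlg$ are computed by tensor products, $r$ is then $(-1)$-truncated, and with that added your argument closes.

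\textbf{The paper's route.} The paper sidesteps the issue by citing \cite[Proposition 4.7]{MM25}. This identifies $U$ with $\widetilde{\Spec}(\Gamma(U,\OO_U))$, the functor $R\mapsto\Maps_{\DAlg}(\Gamma(U,\OO_U),R)$, which is defined for nonconnective derived rings. The conclusion then follows from the adjunction between $\Gamma(-,\OO)$ and $\widetilde{\Spec}$.
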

\begin{proof}
    Both sides take colimits in $Y$ to limits. So we may assume that $Y$ is an affine scheme. 
    Replacing $Z$ by $Y$ and $X$ by $X \times_Z Y$, we may assume that $X$ is a quasi-affine scheme. Then $X = \widetilde{\Spec}(\Gamma(X,\OO_X))$ \cite[Proposition 4.7]{MM25}, and the statement follows from the adjunction of $\widetilde{\Spec}$ with $\Gamma(-,\OO)$ \cite[Construction 1.2]{MM25}.
\end{proof}

The definition of a quasi-affine morphism is local on the target. However, when the target is a quasi-geometric stack, there is a canonical global choice of quasi-compact open immersion into an affine morphism, arising from the connective cover construction for derived algebras.

\begin{construction}\label{cons: connective cover dalg}
    For a prestack $X$, let $\CAlg(X)^{\leq 0}:= \CAlg(X) \times_{\QCoh(X)} \QCoh(X)^{\leq 0}$ and $\DAlg(X)^{\leq 0} := \DAlg(X) \times_{\QCoh(X)} \QCoh(X)^{\leq 0}$.
    By \cite[Proposition 7.1.3.13]{HA}, colimits of connective $\Einfty$-algebras are connective, so $\CAlg(X)^{\leq 0} \to \CAlg(X)$ preserves colimits. Since $\DAlg(X) \to \CAlg(X)$ is conservative and preserves colimits, $\DAlg(X)^{\leq 0} \to \DAlg(X)$ preserves colimits.
    Thus, these inclusions admit right adjoints $\tau^{\leq 0}: \DAlg(X) \to \DAlg(X)^{\leq 0}$ and $ \tau^{\leq 0}: \CAlg(X) \to \CAlg(X)^{\leq 0}$. If $\AA \in \DAlg(X)$, the counit morphism 
    \[ \tau^{\leq 0} \AA \to \AA\]
    is the \emph{connective cover} of $\AA$.
\end{construction}

The following lemma states that truncation of algebras agrees with truncation on the underlying module:

\begin{lemma}\label{lemma: connective cover and forgetful}
    Let $X$ be a prestack.
    Then the squares 
    \[ 
\begin{tikzcd}[ampersand replacement=\&]
	{\CAlg(X)^{\leq 0}} \& {\CAlg(X)} \\
	{\QCoh(X)^{\leq 0}} \& {\QCoh(X)}
	\arrow[from=1-1, to=1-2]
	\arrow[from=1-1, to=2-1]
	\arrow[from=1-2, to=2-2]
	\arrow[from=2-1, to=2-2]
\end{tikzcd}
\quad \text{ and }\quad 
\begin{tikzcd}[ampersand replacement=\&]
	{\DAlg(X)^{\leq 0}} \& {\DAlg(X)} \\
	{\CAlg(X)^{\leq 0}} \& {\CAlg(X)}
	\arrow[from=1-1, to=1-2]
	\arrow[from=1-1, to=2-1]
	\arrow[from=1-2, to=2-2]
	\arrow[from=2-1, to=2-2]
\end{tikzcd}
    \]
    are right adjointable.
\end{lemma}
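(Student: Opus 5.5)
Right adjointability of the squares means this: the inclusions along the rows have right adjoints $\tau^{\leq 0}$, and these right adjoints must commute with the vertical forgetful functors. Concretely, for $\AA \in \CAlg(X)$ (respectively $\DAlg(X)$), the natural map from the forgetful image of $\tau^{\leq 0}\AA$ to the connective cover of the forgetful image of $\AA$ must be an equivalence.

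For the left square, I will show that the underlying module of the algebra connective cover is the module connective cover $\tau^{\leq 0}$ of $\AA$ in $\QCoh(X)$. The plan is to put an $\Einfty$-structure on $\tau^{\leq 0}\AA$ (module truncation) and check it has the universal property.
\begin{enumerate}
    \item The $t$-structure on $\QCoh(X)$ is compatible with the symmetric monoidal structure: $\one$ is connective and connective objects are closed under tensor products. This holds because both conditions are checked after pullback to affines, where they are standard.
    \item By \cite[Proposition 2.2.1.1 and Example 2.2.1.3]{HA}, the lax symmetric monoidal functor $\tau^{\leq 0}: \QCoh(X) \to \QCoh(X)^{\leq 0}$ induces a right adjoint $\CAlg(X) \to \CAlg(X)^{\leq 0}$ to the inclusion. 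This is compatible with the forgetful functors, by the colocalization version of \cite[Proposition 7.1.3.13]{HA}.
    \item By uniqueness of adjoints, this agrees with the $\tau^{\leq 0}$ of Construction \ref{cons: connective cover dalg}, which gives left adjointability of the first square.
\end{enumerate}

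For the right square, I need to lift $\tau^{\leq 0}U(\AA) \to U(\AA)$ to $\DAlg(X)$ for $\AA \in \DAlg(X)$, where $U$ is the forgetful functor. I will use that the monad $M$ on $\QCoh(X)$ preserves $\QCoh(X)^{\leq 0}$, as noted at the end of the construction of $\QCohL$.
\begin{enumerate}
    \item Because $M$ preserves connective objects, applying it to the counit and composing with the action gives
    \[ M(\tau^{\leq 0}\AA) \to M(\AA) \to \AA. \]
    The source is connective, so this map factors uniquely through $\tau^{\leq 0}\AA$.
    \item Hence the coreflective subcategory $\QCoh(X)^{\leq 0}$ is stable under $M$, and the monad structure descends to $\tau^{\leq 0}$. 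The cleanest statement: for a monad $M$ preserving a coreflective subcategory, $\tau^{\leq 0}$ lifts to $M$-algebras as the right adjoint of the inclusion of algebras with connective underlying object. This is the dual of the standard fact for reflective localizations compatible with a monad.
    \item Applying this to both $M = \LSym$ and $M = \Einfty$ shows that both $\DAlg \to \QCoh$ and $\CAlg \to \QCoh$ intertwine the truncations. Since the forgetful functor $\DAlg \to \CAlg$ commutes with the forgetful functors to $\QCoh$, which are conservative, the second square is right adjointable.
\end{enumerate}

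The main obstacle is making the descent of the monad structure to $\tau^{\leq 0}$ rigorous at the $\infty$-categorical level. If the general coreflective-monad lemma is awkward, I would instead argue with Construction \ref{cons: connective cover dalg} directly: given $\BB$ connective, compute $\Maps(\BB, \tau^{\leq 0}\AA)$ via the bar resolution $\BB \simeq |M^{\bullet+1}\BB|$. Each term $M^{n+1}\BB$ is free on a connective object, so on free algebras the mapping spaces reduce to $\Maps_{\QCoh}(M^n\BB, \AA) = \Maps(M^n\BB, \tau^{\leq 0}\AA)$. This exhibits the underlying module of the algebra cover as the module truncation.
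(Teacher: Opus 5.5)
Your argument is correct, but it is organized differently from the paper's. The paper treats both squares uniformly. The truncations on $\CAlg(X)$ and $\DAlg(X)$ already exist by Construction \ref{cons: connective cover dalg}, via the adjoint functor theorem. Right adjointability is then equivalent, by passing to mates, to the statement that the free functors $\QCoh(X)\to\CAlg(X)$ and $\QCoh(X)\to\DAlg(X)$ carry connective objects to connective objects, i.e.\ that $\Einfty$ and $\LSym$ preserve $\QCoh(X)^{\leq 0}$.

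For the first square you instead build the algebra truncation explicitly from the lax symmetric monoidal structure on $\tau^{\leq 0}$. This is a valid and standard route, and it even produces the right adjoint on $\CAlg$ without invoking presentability. It gives \emph{right} adjointability, not ``left'' as you wrote in your step 3.

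For the second square, your primary route (descending the $M$-action along the coreflection) is only carried out as a single factorization $M(\tau^{\leq 0}\AA)\to\tau^{\leq 0}\AA$. In the $\infty$-categorical setting this does not supply the higher coherences of an algebra structure. The ``dual of the standard fact'' you invoke is also not something you can cite as stated.

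Your fallback is the right fix, and it is exactly the paper's mate argument written in Yoneda form. The bar resolution is unnecessary, though. Let $U$ be the forgetful functor to $\QCoh(X)$ and $N\in\QCoh(X)^{\leq 0}$. Then
\[ \Maps(N, U\tau^{\leq 0}\AA) \simeq \Maps_{\DAlg(X)}(\mathrm{Free}(N), \tau^{\leq 0}\AA) \simeq \Maps_{\DAlg(X)}(\mathrm{Free}(N),\AA)\simeq \Maps(N, \tau^{\leq 0}U\AA). \]
The middle step uses only that $\mathrm{Free}(N)$ is connective, since $M$ preserves $\QCoh(X)^{\leq 0}$. The composite is induced by the Beck--Chevalley map, so Yoneda in $\QCoh(X)^{\leq 0}$ finishes.

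Your reduction of the $\DAlg$/$\CAlg$ square to the two squares over $\QCoh(X)$, using conservativity of $\CAlg(X)^{\leq 0}\to\QCoh(X)^{\leq 0}$ and pasting of Beck--Chevalley maps, is fine. It is implicit in the paper's one-line proof, which only mentions the free functors out of $\QCoh(X)$.
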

\begin{proof}
    The claim means that truncation functors commute with forgetful functors $\DAlg(X) \to \CAlg(X) \to \QCoh(X)$.
    Passing to left adjoints, the claim follows because the left adjoints $\QCoh(X) \to \CAlg(X)$ and $\QCoh(X) \to \DAlg(X)$ send connective objects to connective objects.
\end{proof}

Using connective covers, a quasi-affine morphism over a quasi-geometric stack has a global presentation: 

\begin{lemma}\label{lemma: global quasi-affine into global sections}
    Let $Y$ be a quasi-geometric stack and let $f: X \to Y$ be a prestack over $Y$.
    The following are equivalent:
    \begin{enumerate}
        \item $f: X \to Y$ is quasi-affine;
        \item $X \to \Spec_Y (\tau^{\leq 0} f_*\OO_X)$ is a quasi-compact open immersion;
        \item there exists $A \in \DAlg(Y)^{\leq 0}$ and a factorization $X \to \Spec_Y(A) \to Y$ where $X \to \Spec_Y(A)$ is a quasi-compact open immersion.
    \end{enumerate}
\end{lemma}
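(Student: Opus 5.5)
We prove the cycle of implications $(2)\Rightarrow(3)\Rightarrow(1)\Rightarrow(2)$.

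The implication $(2)\Rightarrow(3)$ is immediate: take $A = \tau^{\leq 0} f_*\OO_X \in \DAlg(Y)^{\leq 0}$ from Construction \ref{cons: connective cover dalg}.

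For $(3)\Rightarrow(1)$, let $A \in \DAlg(Y)^{\leq 0}$. Then $\Spec_Y(A) \to Y$ is affine, since its base change to any $S = \Spec R \to Y$ is $\Spec$ of the connective derived $R$-algebra $A|_S$. Affine morphisms are quasi-affine, and quasi-compact open immersions are quasi-affine by definition. Lemma \ref{lemma: composition of quasi-affine morphisms} then shows that $X \to Y$ is quasi-affine.

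The real content is $(1)\Rightarrow(2)$. Set $B := \tau^{\leq 0} f_*\OO_X$. The counit $B \to f_*\OO_X$ corresponds, under Lemma \ref{lemma: maps into quasi-affine morphism} applied to the affine morphism $\Spec_Y(B) \to Y$ (its global sections being $B$), to a morphism $X \to \Spec_Y(B)$ over $Y$. Whether this is a quasi-compact open immersion can be checked after base change along a faithfully flat $p\colon \Spec R \to Y$; such a $p$ exists since $Y$ is quasi-geometric, and $p$ is quasi-affine because $Y$ has quasi-affine diagonal. So the task reduces to the following: after base change to $\Spec R$, the map $X_R \to \Spec(p^*B)$ is the canonical map $X_R \to \Spec \tau^{\leq 0}\Gamma(X_R,\OO)$, which is a quasi-compact open immersion for the quasi-affine scheme $X_R$ (\cite{MM25}, as in the proof of Lemma \ref{lemma: maps into quasi-affine morphism}). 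Descent for open immersions along fpqc covers then finishes the argument.

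The main obstacle is the base change identification $p^*\tau^{\leq 0} f_*\OO_X \simeq \tau^{\leq 0}\Gamma(X_R,\OO)$. I would handle it in two steps.
\begin{enumerate}
\item Base change for $f_*$. Since $f$ is quasi-compact schematic, $p^*f_*\OO_X \simeq \Gamma(X_R,\OO)$ (\cite[Chapter 3]{GR17I}).
\item Commuting $p^*$ with $\tau^{\leq 0}$. Lemma \ref{lemma: connective cover and forgetful} shows that truncation of derived algebras is computed on underlying quasi-coherent sheaves, so it suffices to check that $p^*$ is $t$-exact. This holds because $p$ is flat: the $t$-structure is defined affine-locally, and flat pullback of modules is $t$-exact.
\end{enumerate}
Here the flatness of $p$ is essential, since truncation does not commute with non-flat pullback. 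This is also where the quasi-geometric hypothesis on $Y$ is used: it supplies a single flat affine cover, as opposed to a general colimit presentation of $Y$.
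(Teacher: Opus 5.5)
Your proposal is correct and follows the paper's proof essentially step for step. You pass to a faithfully flat affine cover of $Y$, use base change for $f_*$ (as $f$ is quasi-compact schematic) and $t$-exactness of flat pullback to commute $\tau^{\leq 0}$ with pullback, and reduce to a quasi-affine scheme over an affine base.

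The one difference is the reference for that final case. The fact that $X_R \to \Spec \tau^{\leq 0}\Gamma(X_R,\OO)$ is a quasi-compact open immersion is not quite what the $\widetilde{\Spec}$ identification in \cite{MM25} gives; the paper cites \cite[Lemma 1.5]{goodmoduli_dag} for it instead.
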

\begin{proof}
    Clearly (2) $\implies$ (3) $\implies$ (1).
    Now suppose that $f$ is quasi-affine.
    Let $g: S\to Y$ be a faithfully flat morphism from an affine scheme $S$.
    Since being a quasi-compact open immersion is local in fpqc topology\footnote{For example by the reasons explained in \cite[Proposition 2.8.3.7]{Lurie-SAG} and \cite[Example 6.3.3.6]{Lurie-SAG}, which adapt straightforwardly to the derived setting. 
    }, it suffices to show that $X \times_Y S \to \Spec_S (g^*\tau^{\leq 0} f_*\OO_X)$ is a quasi-compact open immersion.
    Since $g$ is flat, $g^*\tau^{\leq 0} \simeq \tau^{\leq 0}g^*$; since $f$ is schematic quasi-compact, $f_*$ satisfies base change against $g^*$ by \cite[Proposition 2.2.2]{GR17I}. Thus we may assume that $Y = S$ is affine, so that $X$ is a quasi-affine scheme. Then the statement is proved in \cite[Lemma 1.5]{goodmoduli_dag}. 
\end{proof}

\subsection{Classification of quasi-affine maps in derived geometry}\label{ssec: classification of qaff}

As in the work of Bhatt and Halpern-Leistner \cite{BH17}, we would like to characterize the coordinate rings of quasi-affine morphisms purely algebraically.

\begin{definition}
    A prestack $X$ is a \emph{fpqc-algebraic stack} if 
    $X$ is an fpqc sheaf and admits a faithfully flat qcqs surjection from an affine scheme.
\end{definition}

\begin{notation}
\begin{enumerate}
    \item  Let $\QAff(X)$ be the category of quasi-affine morphisms over $X.$ We make this a symmetric monoidal category via the Cartesian monoidal structure.
\item Let $\Opqc(X) \subset \QAff(X)$ be the full subcategory spanned by quasi-compact open immersions into $X$. Note that the Cartesian monoidal structure from $\QAff(X)$ restricts to one on $\Opqc(X)$.
\item If $f: V \to X$ is a quasi-affine morphism, let $\OO_V \in \DAlg(X)$ be the pushforward along $f$ of the structure sheaf of $V$.
\end{enumerate}
\end{notation}

\begin{definition}\label{defn: localization morphism}
  A morphism $A \to B$ in $\DAlg(X)$ is a \emph{localization} if $B$ is a localization in $\DAlg(X)_{A/}$ in the sense of Definition \ref{def: localizations in thetacats}; equivalently, the multiplication map $B \otimes_A B \to B$ is an equivalence.
  A localization $A \to B$ is a \emph{compact-localization} if $B$ is compact as an object of $\DAlg(X)_{A/}$.
\end{definition}

\begin{theorem}\label{theorem: qaff is compact localizations}
  Let $X$ be a fpqc-algebraic stack.
  \begin{enumerate}
    \item $V \mapsto \OO_V$ extends to a fully faithful symmetric monoidal functor $\QAff(X)^{op} \to \DAlg(X)$.
    \item The essential image of $\QAff(X)^{op}\to\DAlg(X)$ consists of bounded-above compact-localizations of connective objects of $\DAlg(X)$.
    \item The essential image of $\Opqc(X)^{op} \to\DAlg(X)$ consists of bounded-above compact-localizations of $\OO_X$.
  \end{enumerate}
  If $\AA\in \DAlg(X)$ is in the essential image of $\QAff(X)^{op}\to \DAlg(X)$ then it is the compact-localization of $\tau^{\leq 0}\AA$ of Construction \ref{cons: connective cover dalg}.
\end{theorem}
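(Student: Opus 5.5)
We prove (1) by reducing to the affine case through flat descent, and then obtain (2) and (3) by studying the connective cover together with the localization $\tau^{\leq 0}\OO_V \to \OO_V$. For (1), the symmetric monoidal structure is the easy part. Given quasi-affine $V, W \to X$, the projection formula and base change for quasi-compact schematic morphisms \cite[Chapter 3]{GR17I} give $\OO_{V \times_X W} \simeq \OO_V \otimes \OO_W$. Since the forgetful functor to $\QCoh(X)$ is conservative, this equivalence lifts to coproducts in $\DAlg(X)$. For full faithfulness, take $W \to X$ quasi-affine and apply Lemma \ref{lemma: maps into quasi-affine morphism} with $Y = V$ and $Z = X$:
\[ \Maps_X(V,W) \simeq \Maps_{\DAlg(V)}(f_V^*\OO_W, \OO_V) \simeq \Maps_{\DAlg(X)}(\OO_W, f_{V*}\OO_V). \]
The second equivalence is the adjunction $f_V^* \dashv f_{V*}$ on derived algebras, which is part of the $\Theta$-functor data (Remark \ref{remark: morphisms of theta cats}). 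This handles (1).

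For the essential image in (2) and (3), consider first $V \to X$ quasi-affine. By Lemma \ref{lemma: global quasi-affine into global sections}, applied with $X$ in place of $Y$, the map $V \to \Spec_X(\tau^{\leq 0}\OO_V)$ is a quasi-compact open immersion. (That lemma is stated for quasi-geometric $Y$, but its proof only uses a faithfully flat affine cover, which an fpqc-algebraic stack has.) It therefore suffices to show the following: for $j: U \to S := \Spec_X(A)$ a quasi-compact open immersion with $A$ connective, the map $A \to j_*\OO_U$ is a compact-localization and $j_*\OO_U$ is bounded above. Working in $\DAlg(S) \simeq \DAlg(X)_{A/}$, which holds by Proposition \ref{prop: QCohL on quasi-affine morphism}, this is the case $A = \OO_X$ of statement (3). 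Each required property can be checked after a faithfully flat base change $T \to X$ from an affine.
\begin{itemize}
\item The localization property, $j_*\OO_U \otimes j_*\OO_U \simeq j_*\OO_{U \times U} = j_*\OO_U$, holds globally by (1).
\item Bounded-aboveness holds because, on an affine, $U$ is covered by finitely many basic opens and the \v{C}ech complex has finite length.
\item Compactness is the delicate property. Over an affine base, $\Gamma(U,\OO)$ is a finite limit of the localizations $A[f^{-1}]$. By Lemma \ref{lem: detecting compacts in dalg}, compactness in $\DAlg$ follows from compactness of the underlying $\Einfty$-localization, which is \cite[BH17]-style input: the underlying $\Einfty$-algebra is an $\Einfty$ compact-localization of Lurie's type. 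Descending compactness from the cover $T$ to $X$ is the part I expect to be the main obstacle. I would argue directly, using that a localization $B$ of $A$ satisfies
\[ \Maps_{\DAlg_{A/}}(B,C) = \begin{cases} \ast & \text{if } C \otimes_A B \simeq C,\\ \varnothing & \text{otherwise}, \end{cases} \]
so compactness amounts to the statement that the condition $C \otimes_A B \simeq C$ is stable under filtered colimits. By Lemma \ref{lemma: global quasi-affine into global sections}, this condition can be detected by pulling back along $T$, where the support of $U$ is cut out by finitely many elements; pullback along $T$ commutes with filtered colimits.
\end{itemize}

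Conversely, let $A$ be connective and let $A \to B$ be a bounded-above compact-localization. I would build $V$ by base changing along $T \to X$, with \v{C}ech nerve $T^\bullet$. On the affine $T$, $B_T$ is an $\Einfty$-localization (Proposition \ref{prop: preservation of epimorphisms}) that is compact, since base change preserves compact objects. Its bounded-aboveness, by the Bhatt--Halpern-Leistner/Lurie classification \cite[Proposition 3.2.?]{luriedagviii} applied to underlying $\Einfty$-rings, identifies $B_T$ with $\Gamma(U_T,\OO)$ for a quasi-compact open $U_T \subset \Spec_T(A_T)$ determined by $B_T$. Uniqueness of this open makes the $U_{T^\bullet}$ compatible, and fpqc descent, which is local as in Lemma \ref{lemma: global quasi-affine into global sections}, produces $V \to \Spec_X(A)$ with $\OO_V \simeq B$. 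Here Proposition \ref{prop: preservation of epimorphisms}(3) guarantees that the $\DAlg$-structure on $B$ agrees with the one on $\OO_V$. Finally, the last claim follows from the factorization above: $\OO_V$ is a compact-localization of $\tau^{\leq 0}\OO_V$.
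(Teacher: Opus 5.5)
Your proof of (1), and of the forward inclusions in (2) and (3), is fine and follows the paper. On an affine, compactness of $\OO_U$ comes from \cite[Lemma 2.6]{BH17} via Lemma~\ref{lem: detecting compacts in dalg}, and it is then descended. One correction to the descent step. Compactness of a localization $B$ requires that if $\ccolim_i C_i$ is $B$-local, then some $C_i$ already is; closure under filtered colimits is automatic and is not the point. What proves the needed statement is compactness of $B_T$ over $A_T$ together with conservativity of pullback along the faithfully flat $T\to X$. It is not that $U_T$ is cut out by finitely many functions: for nonconnective $C$, being $J$-local does not mean $J$ generates the unit ideal of $H^0(C)$. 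For example, take $C=\Gamma(\mathbb{A}^2\setminus 0,\OO)$ and $J=(x,y)$.

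The genuine gap is in the converse direction. On the cover you know that $B_T$ is a localization of $A_T$ that is compact in $\DAlg(T)_{A_T/}$. You then invoke the spectral classification for the underlying $\Einfty$-algebra $U(B_T)$; that classification is \cite[Theorem 2.3]{BH17}, and your ``Proposition 3.2.?'' is only a placeholder. It requires $U(B_T)$ to be compact in $\CAlg(T)_{U(A_T)/}$, and nothing you have established gives this:
\begin{itemize}
\item Proposition~\ref{prop: preservation of epimorphisms} moves the localization property in both directions between $\DAlg$ and $\CAlg$.
\item Lemma~\ref{lem: detecting compacts in dalg} moves compactness only from $\CAlg$ to $\DAlg$, not back.
\item The two notions of compactness genuinely differ, as the $\Fp[x]$ example in the paper shows.
\end{itemize}
A direct proof that a $\DAlg$-compact localization is $\CAlg$-compact would amount to showing that its torsion part is generated by a compact, Koszul-type object. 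That is essentially the classification you are trying to prove.

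This is why the paper does not reduce to the spectral theorem but reruns its proof inside $\DAlg$, in Lemmas~\ref{lem: hopkins-neeman for derived algebras.}, \ref{lem: affine schemes case of qaff} and \ref{lemma: Bhl full derived theorem}. The steps are:
\begin{itemize}
\item Write $H^0(S)$ as a filtered colimit of finitely generated $\ZZ$-algebras, using that compact localizations in $\DAlg$ commute with such colimits, to reduce to the Noetherian case.
\item On a classical Noetherian affine, apply Hopkins--Neeman, which needs only that $U(\AA)$ is a localization, to write $\AA\simeq\ccolim\OO_{V_i}$.
\item Use $\DAlg$-compactness of $\AA$ to pick out a single $\OO_{V_i}$.
\item Lift up the Postnikov tower of $S$ using the vanishing of the $\Einfty$-cotangent complex of localizations \cite[Lemmas 2.5, 2.12]{BH17} together with Proposition~\ref{prop: preservation of epimorphisms}(3).
\item Pass to the limit using eventual connectivity.
\end{itemize}
Throughout, compactness is used only in $\DAlg$, and only the localization property is transported to $\CAlg$. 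Your descent and uniqueness bookkeeping around this step is fine, but the step itself needs an argument of this kind.
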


Theorem \ref{theorem: qaff is compact localizations} is a version of \cite[Theorem 2.3]{BH17} in derived algebraic geometry instead of spectral algebraic geometry.
The proof of Theorem \ref{theorem: qaff is compact localizations} occupies the rest of this section.
Parts (1), (2), and (3) of Theorem \ref{theorem: qaff is compact localizations} are proved in Lemma \ref{lemma: quasi-affine to coordinate ring is faithful}, Corollary \ref{cor: bounded-above compact localization implies quasi-affine}, and Lemmas \ref{lemma: open substack algebra is compact localization} and \ref{lemma: Bhl full derived theorem}, respectively.
We use the spectral version of Theorem \ref{theorem: qaff is compact localizations} as input to the proof, using our comparison between localizations in $\DAlg$ and $\CAlg$ in §\ref{ssec: localization}.
\begin{lemma}\label{lemma: quasi-affine to coordinate ring is faithful}
  Let $X$ be a prestack. 
  Then $V \mapsto \OO_V$ extends to a fully faithful symmetric monoidal functor $\QAff(X)^{op} \to \DAlg(X)$.
\end{lemma}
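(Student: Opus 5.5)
The plan is to build the functor as a composite and then reduce full faithfulness to Lemma \ref{lemma: maps into quasi-affine morphism}. For a quasi-affine $f\colon V \to X$, Proposition \ref{prop: QCohL on quasi-affine morphism}(2) identifies $\QCohL(V)$ with $\Mod_{\OO_V}(\QCohL(X))$ as $\Theta$-categories under $\QCohL(X)$, where $\OO_V = f_*\OO_V$. I will consider the composite
\[ \QAff(X)^{op} \to \PreStk_{/X}^{op} \xrightarrow{\QCohL} \thetaCat_{\QCohL(X)/} \xrightarrow{R} \DAlg(X), \]
with $R$ the right adjoint of Proposition \ref{prop: module cat adjunction}, namely $g^* \mapsto g_*\one$. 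This composite is functorial and sends $V$ to $f_*\OO_V = \OO_V$. So it is the required extension of $V \mapsto \OO_V$.

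For full faithfulness, I take quasi-affine $f\colon V\to X$ and $g\colon W \to X$ and need
\[ \Maps_X(W,V) \to \Maps_{\DAlg(X)}(\OO_V, \OO_W) \]
to be an equivalence. Since $g_*$ is right adjoint to $g^*$ at the level of $\DAlg$ (this is the square of Remark \ref{remark: morphisms of theta cats} for the $\Theta$-functor $g^*$), the target is $\Maps_{\DAlg(W)}(g^*f_*\OO_V, \OO_W)$. By Lemma \ref{lemma: maps into quasi-affine morphism}, this last space is $\Maps_X(W,V)$. The step I expect to need the most care is checking that the map produced by the composite functor agrees with the one in Lemma \ref{lemma: maps into quasi-affine morphism} under this adjunction identification. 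Both send $h\colon W\to V$ to the map obtained from the unit $\OO_V \to h_*\OO_W$ followed by pushforward along $f$, so I would check this compatibility at the level of units of adjunctions. Note that this argument does not use quasi-affineness of $g$. Alternatively, one can phrase it as the equivalence $\Fun^{\QCohL(X)}_\Theta(\Mod_{\OO_V},\QCohL(W)) \simeq \Maps_{\DAlg(X)}(\OO_V,\OO_W)$ from Proposition \ref{prop: module cat adjunction}.

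For symmetric monoidality, the Cartesian product $V\times_X W$ must go to $\OO_V\otimes\OO_W$, which is the coproduct in $\DAlg(X)$; the unit $X$ goes to $\OO_X$, the initial object. Since both monoidal structures are cocartesian in $\DAlg(X)$ (coproduct) and cartesian in $\QAff(X)$, it suffices to show the functor $\QAff(X)^{op}\to\DAlg(X)$ preserves finite coproducts. That is, the natural map $\OO_V\otimes_{\OO_X}\OO_W \to \OO_{V\times_X W}$ must be an equivalence. A symmetric monoidal structure then exists uniquely, by the general fact about cartesian and cocartesian monoidal structures. The check is a base change and projection formula computation. With $p\colon V\times_X W\to W$, we have $\OO_{V\times_XW} = g_*p_*\OO = g_*g^*f_*\OO_V$ by base change for the quasi-compact schematic map $f$. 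The projection formula for $g$ then gives $g_*g^*f_*\OO_V \simeq f_*\OO_V\otimes g_*\OO_W$. Both base change and the projection formula hold as in Proposition \ref{prop: QCohL on quasi-affine morphism}. The equivalence can be checked on underlying modules, because $\DAlg(X)\to\QCoh(X)$ is conservative. The empty product $X \mapsto \OO_X$ is immediate.
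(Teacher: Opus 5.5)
Your proposal is correct and follows the paper's proof. Full faithfulness comes from Lemma \ref{lemma: maps into quasi-affine morphism} via the adjunction $g^*\dashv g_*$ on $\DAlg$, and symmetric monoidality reduces to the equivalence $\OO_V\otimes\OO_W\simeq\OO_{V\times_X W}$, checked on underlying modules. The only difference is that the paper uses base change to reduce to affine $X$ and then cites \cite[Lemma 2.4]{BH17}. You instead verify the equivalence directly over an arbitrary prestack by combining base change for $f$ with the projection formula for $g$, both of which the paper already uses in Proposition \ref{prop: QCohL on quasi-affine morphism}, so your argument is slightly more self-contained.
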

The proof follows \cite[Lemma 2.4]{BH17}.
\begin{proof}
  Full faithfulness follows from Lemma \ref{lemma: maps into quasi-affine morphism}.
  It suffices to show that for any two quasi-affine $V_0,V_1\in \QAff(X)^{\mathrm{op}}$ the natural morphism $\OO_{V_0}\otimes \OO_{V_1}\to \OO_{V_0\times_X V_1} $ is an equivalence.
  Pushforward along quasi-affine maps satisfies base change by \cite[Proposition 2.2.2]{GR17I}, so we may assume that $X$ is affine. After this, the proof is the same as that of \cite[Lemma 2.4]{BH17}, noting that limits and colimits of derived algebras remain derived algebras.
\end{proof}


\begin{lemma}\label{lemma: open substack algebra is compact localization}
  Let $X$ be an fpqc-algebraic stack. 
  If $V \in \Opqc(X)$, then $\OO_V$ is an eventually connective compact-localization of $\OO_X$.
\end{lemma}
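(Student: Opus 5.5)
Let $j: V \to X$ denote the quasi-compact open immersion, so that $\OO_V = j_*\OO_V \in \DAlg(X)$. We must show three things: that $\OO_X \to \OO_V$ is a localization, that $\OO_V$ is eventually connective (bounded above), and that $\OO_V$ is compact in $\DAlg(X) = \DAlg(X)_{\OO_X/}$. The strategy is to establish each property for the underlying $\Einfty$-algebra $U(\OO_V) \in \CAlg(X)$, using the spectral theorem of Bhatt--Halpern-Leistner \cite[Theorem 2.3]{BH17} as input. We then transfer back to $\DAlg$ using §\ref{ssec: localization}.

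\emph{Localization.} By Lemma \ref{lemma: quasi-affine to coordinate ring is faithful}, the functor $V\mapsto\OO_V$ is symmetric monoidal. Hence
\[ \OO_V \otimes \OO_V \simeq \OO_{V\times_X V} = \OO_V, \]
since $V \to X$ is a monomorphism and so $V \times_X V \simeq V$. The multiplication map is therefore an equivalence, so $\OO_V$ is a localization in the sense of Definition \ref{def: localizations in thetacats}. Equivalently, by Proposition \ref{prop: preservation of epimorphisms}(1), we may check this on $U(\OO_V)$.

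\emph{Eventual connectivity.} This is a statement about the underlying quasi-coherent sheaf, so it can be checked after pulling back along a faithfully flat qcqs cover $g: S \to X$ with $S$ affine. Here $g^*$ is t-exact because $g$ is flat, and $j_*$ commutes with $g^*$ by base change for quasi-compact schematic morphisms \cite[Proposition 2.2.2]{GR17I}. We thereby reduce to a quasi-compact open $V_S \subset S = \Spec A$. Such a $V_S$ is covered by finitely many basic opens, say $n$ of them. Čech cohomology then shows that $\Gamma(V_S,\OO)$ has cohomology only in degrees $\le n-1$ relative to the connective range, so it is bounded above.

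\emph{Compactness.} This is the step I expect to be the main obstacle. By Lemma \ref{lem: detecting compacts in dalg}, since $\OO_V$ is a localization, it suffices to show that $U(\OO_V)$ is compact in $\CAlg(X)$. This is the spectral statement. Here $X$ is an fpqc-algebraic stack, and its underlying spectral stack has the same $\QCoh(X)$ and $\CAlg(X)$, because both are defined by descent from affines. Moreover, $U(\OO_V)$ is the pushforward of the structure sheaf of the corresponding spectral open substack. So \cite[Theorem 2.3]{BH17}, or the lemma there that $\OO_V$ is a compact localization, applies directly. The care needed is in checking that the hypotheses of \cite{BH17} on $X$ are met, namely qcqs faithfully flat affine cover and quasi-affine diagonal, and that the comparison of underlying spectral objects is valid. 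If one prefers a direct argument, I would instead show compactness of the localization by reducing to a finite-type description: locally $V$ is the complement of $V(f_1,\dots,f_n)$, and one can identify $U(\OO_V)$ as a compact object via its characterization as the universal $\Einfty$-algebra in which the ideal generated by the $f_i$ becomes the unit ideal. This condition is checkable by maps out, so it commutes with filtered colimits.

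Combining the three parts gives the lemma.
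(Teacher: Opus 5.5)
Your arguments for the localization property (symmetric monoidality of $V\mapsto\OO_V$ plus $V\times_X V\simeq V$) and for eventual connectivity (flat base change to an affine cover, then a finite \v{C}ech computation) are correct. They are more hands-on than the paper, which reduces to $X$ affine and reads both facts off \cite[Lemma 2.6]{BH17} applied to $U(\OO_V)$.

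The gap is in the compactness step. Reducing via Lemma \ref{lem: detecting compacts in dalg} to compactness of $U(\OO_V)$ in $\CAlg(X)$ is fine. But you then apply \cite{BH17} to an ``underlying spectral stack'' of the non-affine $X$ and leave its hypotheses unchecked. Agreement of $\QCoh$ and $\CAlg$ is the easy part. What is actually needed is that this spectral stack is fpqc-algebraic, with the cover still faithfully flat and qcqs, and that $V$ goes to a quasi-compact open substack whose structure sheaf pushes forward to $U(\OO_V)$. Transferring such geometric properties from animated to $\Einfty$-rings for a general stack is real work: the paper needs Chough's results for the analogous transfer even for qcqs algebraic spaces (Lemma \ref{lemma: preservation of algebraic spaces}). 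You do not carry this out. Note also that quasi-affine diagonal, which you list among the hypotheses to check, is not assumed in the lemma. Your fallback argument via $f_1,\dots,f_n$ is only local on $X$, and compactness is not an fpqc-local property in general, so as written it does not give the global statement either.

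The missing idea is that compactness of $\OO_V$ does descend, precisely because $\OO_V$ is a localization; this is what makes a reduction to affine $X$ work.
\begin{itemize}
\item For any $\BB\in\DAlg(X)$, the space $\Maps_{\DAlg(X)}(\OO_V,\BB)$ is empty or contractible. It is nonempty if and only if $\BB\to\OO_V\otimes\BB$ is an equivalence: a map $\OO_V\to\BB$ provides a retraction of this epimorphism, and conversely.
\item Take a faithfully flat cover $g\colon S=\Spec A\to X$. The condition above can be tested after applying the conservative symmetric monoidal functor $g^*$, and $g^*\OO_V\simeq\OO_{V\times_X S}$ by base change. So $\Maps_{\DAlg(X)}(\OO_V,\BB)\neq\emptyset$ if and only if $\Maps_{\DAlg(S)}(\OO_{V\times_X S},g^*\BB)\neq\emptyset$.
\item Since $g^*$ commutes with filtered colimits, and a filtered colimit of empty-or-contractible spaces is nonempty exactly when some term is, compactness of $\OO_V$ in $\DAlg(X)$ follows from compactness of $\OO_{V\times_X S}$ in $\DAlg(S)$.
\end{itemize}
Over the affine $S$ there is no comparison issue, since the spectral counterpart of $V\times_X S$ is the evident open in $\Spec(uA)$. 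Either \cite[Lemma 2.6]{BH17} with Lemma \ref{lem: detecting compacts in dalg}, or your unit-ideal argument, then finishes the proof.
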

\begin{proof}
Using the same maneuvers as in the proof of \cite{BH17}, we may reduce to the case when $X$ is an affine scheme. In this case, 
let $uV\to uX$ be the induced morphism of spectral schemes which is also a quasi-compact open immersion.
By Remark \ref{remark: morphisms of theta cats}, $U(\OO_V)=\OO_{uV}$ in the category $\CAlg(X).$ By \cite[Lemma 2.6]{BH17},  $U(\OO_V)$ is an eventually connective compact localization of $\OO_{uX}=U(\OO_X).$ As $U$ preserves the underlying object in $\QCoh(X)$, $\OO_V$ is eventually connective.
 By Proposition \ref{prop: preservation of epimorphisms}, $\OO_V$ is a localization in $\DAlg(X)$; by Lemma \ref{lem: detecting compacts in dalg}, $\OO_V$ is compact in $\DAlg(X)$.
\end{proof}

It now remains to check that any eventually connective compact-localization in $\DAlg(X)$ is of the form $\OO_V$ for an open substack $V \into X$. Following \cite{BH17}, the proof will reduce to the case of a classical Noetherian scheme by spreading out.

\begin{lemma}[Hopkins-Neeman for derived algebras]\label{lem: hopkins-neeman for derived algebras.}
    Let $X$ be a classical Noetherian scheme. Let $\AA\in \DAlg(X)$ be a localization. Then $\AA\simeq \ccolim \OO_{V_i}$ for some pro-object $\{V_i\}$ in $\Opqc(X).$
\end{lemma}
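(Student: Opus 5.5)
The plan is to reduce to the spectral Hopkins--Neeman statement used by Bhatt--Halpern-Leistner and then lift the answer back to derived algebras, using the comparison of localizations in §\ref{ssec: localization}.

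\textbf{Step 1: pass to $\Einfty$-algebras.} Let $\AA\in\DAlg(X)$ be a localization. By Proposition \ref{prop: preservation of epimorphisms}(1), $U(\AA)\in\CAlg(X)$ is again a localization. We then invoke the spectral version, the Hopkins--Neeman-type classification used in \cite{BH17} (their Lemma 2.9 or its analogue). For a classical Noetherian scheme, viewed as a spectral scheme $uX$, it gives
\[
U(\AA)\simeq \ccolim_i \OO_{uV_i}
\]
for a pro-object $\{V_i\}$ of quasi-compact opens of $X$. The opens of $X$ and of $uX$ agree, since they are determined by the underlying topological space. The indexing can therefore be taken in $\Opqc(X)$.

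\textbf{Step 2: lift the colimit to $\DAlg(X)$.} Each $\OO_{V_i}\in\DAlg(X)$ is a localization by Lemma \ref{lemma: open substack algebra is compact localization}, and $U(\OO_{V_i})=\OO_{uV_i}$ by Remark \ref{remark: morphisms of theta cats}. The transition maps $\OO_{uV_i}\to\OO_{uV_j}$ lift uniquely to $\DAlg(X)$. One justification is Lemma \ref{lemma: quasi-affine to coordinate ring is faithful}, since they come from the inclusions $V_j\subset V_i$. Another is Proposition \ref{prop: preservation of epimorphisms}(4), since localizations embed fully faithfully via $U$. Hence the diagram $i\mapsto \OO_{V_i}$ exists in $\DAlg(X)$.

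Because $U$ preserves colimits (by definition of a $\Theta$-category), we get
\[
U(\ccolim_i \OO_{V_i})\simeq \ccolim_i\OO_{uV_i}\simeq U(\AA).
\]
The colimit $\ccolim_i\OO_{V_i}$ is a localization, since a filtered colimit of epimorphisms under $\OO_X$ is an epimorphism; alternatively, apply Proposition \ref{prop: preservation of epimorphisms}(1) to its image under $U$.

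Both $\AA$ and $\ccolim_i\OO_{V_i}$ are therefore localizations of $\OO_X$ with equivalent images under $U$. By the full faithfulness in Proposition \ref{prop: preservation of epimorphisms}(4), the equivalence $U(\AA)\simeq U(\ccolim_i\OO_{V_i})$ lifts to an equivalence $\AA\simeq \ccolim_i\OO_{V_i}$ in $\DAlg(X)$.

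\textbf{Main obstacle.} The delicate point is Step 1: making sure the spectral Hopkins--Neeman input applies verbatim. It must hold for localizations in $\CAlg(\QCoh(X))$ with $X$ a classical Noetherian scheme, and we must check that the spectral quasi-compact opens correspond to those of the derived scheme $X$.

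If the cited spectral result is only stated for affine $X$, I would first prove the affine case. I would then glue over a finite affine cover using Zariski descent for $\DAlg$ (Lemma \ref{lem: flat descent for dalg}). The gluing uses that a localization is determined by its support data: for an affine Noetherian scheme, the localizing subcategory $\ker(-\otimes\AA)$ is classified by a specialization-closed subset, and this subset is compatible with restriction to opens.
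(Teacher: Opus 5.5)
Your proposal is correct and is essentially the paper's proof. You apply \cite[Lemma 2.9]{BH17} to the localization $U(\AA)$, identify $\Opqc(uX)$ with $\Opqc(X)$, and conclude via Proposition \ref{prop: preservation of epimorphisms}; you spell out that last step (lifting the diagram and the equivalence through the full faithfulness of $U$ on localizations) in more detail than the paper, and since the paper applies the cited spectral lemma directly to the Noetherian scheme $X$, your fallback affine-gluing argument is not needed.
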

\begin{proof}
    The object $U\AA\in \CAlg(X)$ is a localization, whence by \cite[Lemma 2.9]{BH17} isomorphic to $\ccolim \OO_{V_i}$ for some pro-object $\{V_i\}\in \Opqc(uX).$ 
    Since for any open subscheme $V\in \Opqc(uX)$ there is an open subscheme $V\in \Opqc(X)$, we conclude by Proposition \ref{prop: preservation of epimorphisms}.
\end{proof}

\begin{lemma}\label{lem: affine schemes case of qaff}
    Let $X$ be a classical affine scheme. Let $\AA \in \DAlg(X)$  be a compact-localization of $\OO_X$. Then $\AA\simeq \OO_V$ for some quasi-compact open subscheme $V\subset X$.
\end{lemma}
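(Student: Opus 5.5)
The plan is to reduce to the spectral statement of Bhatt--Halpern-Leistner, using the comparison of localizations in $\DAlg$ and $\CAlg$ from §\ref{ssec: localization}. Write $X=\Spec R$ with $R$ a classical ring. Since $\OO_X \to \AA$ is a localization in $\DAlg(X)$, Proposition \ref{prop: preservation of epimorphisms}(1) shows that $U\AA$ is a localization of $U\OO_X=R$ in $\CAlg(X)$. The difficulty is compactness: compactness of $\AA$ in $\DAlg_R$ does not obviously give compactness of $U\AA$ in $\CAlg_R$. Lemma \ref{lem: detecting compacts in dalg} only goes the other way, and the example of $\Fp[x]$ shows the converse fails in general. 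So I will not transfer compactness directly. Instead I will exploit the pro-open description of localizations together with compactness in $\DAlg$.

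\textbf{Step 1 (Noetherian approximation).} Write $R=\ccolim R_\alpha$ as a filtered colimit of finitely generated $\ZZ$-subalgebras. Then $\DAlg_R$ is compactly generated and $\OO_X\to\AA$ is a compact object of $\DAlg_{R/}$. Standard spreading out for compact objects under filtered colimits of bases then gives an index $\alpha$ and a compact $\AA_\alpha\in\DAlg_{R_\alpha/}$ with $\AA\simeq \AA_\alpha\otimes_{R_\alpha}R$. Here $\DAlg$ of the colimit is the colimit of the $\DAlg$'s on compact objects. The localization condition $\AA\otimes_R\AA\simeq\AA$ is an equivalence between compact objects, so it descends after enlarging $\alpha$. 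Hence we may assume $X$ is Noetherian. Base change of $\OO_V$ along $\Spec R\to\Spec R_\alpha$ is $\OO_{V\times\Spec R}$ by Lemma \ref{lemma: quasi-affine to coordinate ring is faithful} (base change for quasi-affine pushforward), so the result then transfers back.

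\textbf{Step 2 (Noetherian case).} By Lemma \ref{lem: hopkins-neeman for derived algebras.}, $\AA\simeq\ccolim_i\OO_{V_i}$ for a pro-system $\{V_i\}$ in $\Opqc(X)$. This colimit is filtered in $\DAlg(X)$, so compactness of $\AA$ says the identity of $\AA$ factors through some $\OO_{V_i}\to\AA$. Thus $\AA$ is a retract of $\OO_{V_i}$ under $\OO_X$. Since both are localizations of $\OO_X$, maps between them under $\OO_X$ are unique when they exist: mapping spaces out of an epimorphism are empty or contractible. The retraction $\OO_{V_i}\to\AA\to\OO_{V_i}$ is therefore the identity, and $\AA\simeq\OO_{V_i}$.

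\textbf{Main obstacle.} I expect Step 1 to be the hardest part. It needs two things. First, compact objects of $\DAlg_{R}$ under $R$ must descend along the filtered colimit $R=\ccolim R_\alpha$; this holds since $\DAlg_{-}$ is generated under sifted colimits by $\LSym$ of finite free modules and these descend. Second, the resulting open $V\subset\Spec R_\alpha$ must pull back correctly. If the descent of compacts proves awkward, there is an alternative: Hopkins--Neeman on a non-Noetherian affine, phrased spectrally via \cite[Lemma 2.9]{BH17} and used through Proposition \ref{prop: preservation of epimorphisms}, might give the pro-open presentation directly. Step 2 would then apply verbatim.
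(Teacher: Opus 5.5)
Your proposal is correct and follows the paper's proof essentially step for step. The paper also reduces to a finitely generated $\ZZ$-algebra by spreading out compact localizations along $R=\ccolim_\alpha R_\alpha$, citing \cite[Corollary 6.24]{galoisgroup} for $\ccolim_\alpha \Comploc(\DAlg(R_\alpha))\simeq \Comploc(\DAlg(R))$ where your Step 1 sketches this by hand; it then concludes $\AA\simeq\OO_{V_i}$ from Lemma \ref{lem: hopkins-neeman for derived algebras.}, compactness, and uniqueness of maps between localizations of $\OO_X$, exactly as in your Step 2. Your fallback of applying Hopkins--Neeman directly to a non-Noetherian ring would not work as stated, since arbitrary localizations there need not be pro-open (the telescope conjecture can fail), but your main argument does not use it.
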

\begin{proof}
    Let $X=\Spec(S)$ for some discrete ring $S$.  Write $S=\ccolim_i S_i$ as a filtered colimit of finitely generated $\ZZ$-algebras.
    By \cite[Corollary 6.24]{galoisgroup}, if $\Comploc(\DD)$ is the full subcategory of compact localizations in $\DD$,
    \[ \ccolim_{i} \Comploc(\DAlg(S_i)) \overset{\sim}{\to} \Comploc(\DAlg(\ccolim_i S_i)).\]
    Thus we may assume $S$ is a Noetherian ring. By Lemma \ref{lem: hopkins-neeman for derived algebras.}, $\AA\simeq\ccolim_i \OO_{V_i}$ where $\{V_i\}$ is a pro-object in $\Opqc(X)$. 
    Since $\AA$ is compact, the equivalence $\AA \overset{\sim}{\to} \ccolim_i \OO_{V_i}$ factors through some $\AA \to \OO_{V_i}$.
    But by definition of colimit, there is a map $\OO_{V_i}\to A$. Since both algebras are localizations of $S$, these maps are equivalences. 
\end{proof}

\begin{lemma}\label{lemma: Bhl full derived theorem}
    Let $X$ be a fpqc-algebraic stack. Let $\AA\in \DAlg(X)$ be an eventually connective compact-localization of $\OO_X.$ Then $\AA\simeq \OO_V$ for some quasi-compact open substack $V\into X.$
\end{lemma}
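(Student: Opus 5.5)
We reduce to the affine case by fpqc descent, then pass through the spectral statement of Bhatt--Halpern-Leistner using the comparison of localizations in §\ref{ssec: localization}. Choose a faithfully flat qcqs surjection $p\colon S=\Spec R \to X$ with $R$ an animated ring.

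\emph{Step 1: base change.} Since $p$ is flat, $p^*$ is a $\Theta$-functor that is $t$-exact. It therefore preserves localizations, because it is symmetric monoidal and preserves $B\otimes B\to B$ being an equivalence, and it preserves eventual connectivity. For compactness I would use that $\DAlg(X)=\mathrm{Tot}\,\DAlg(S^\bullet)$ by Corollary \ref{cor: flat descent for QCohL}. Alternatively, and more robustly, I would argue directly via Proposition \ref{prop: preservation of epimorphisms}: $U\AA$ is a localization of $\OO_X$ in $\CAlg(X)$. Is it compact there? Here I would rather not pass compactness from $\DAlg$ to $\CAlg$. Instead, the plan is to prove the statement for $S$ first and then descend the open.

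\emph{Step 2: affine $S$.} Here $p^*\AA$ is an eventually connective localization of $R$. I expect the main obstacle here, since Lemma \ref{lem: affine schemes case of qaff} treats only classical $S$ and does not use eventual connectivity. The plan is to use that an eventually connective localization $R\to B$ is controlled by $\pi_0$. Set $B_0:=B\otimes_R \pi_0 R$; it is an eventually connective compact-localization of $\pi_0R$, so by Lemma \ref{lem: affine schemes case of qaff} we have $B_0\simeq \OO_{V_0}$ for a quasi-compact open $V_0\subset \Spec \pi_0R=|\Spec R|$. Let $V\subset \Spec R$ be the corresponding open. Then $\OO_V$ is a localization of $R$ (Lemma \ref{lemma: open substack algebra is compact localization}), and $B\otimes_R\OO_V$ and $\OO_V$ are both localizations whose base changes to $\pi_0R$ agree. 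Eventual connectivity lets one run a Postnikov/nilpotent-extension argument. Concretely, I would follow \cite[proof of Theorem 2.3]{BH17} on the underlying $\Einfty$-algebras: $U B$ is an eventually connective localization of $R$ whose compactness is only needed at the classical level, giving $UB\simeq \OO_{uV}$. Then Proposition \ref{prop: preservation of epimorphisms}(4) upgrades this to $B\simeq\OO_V$ in $\DAlg$. Compactness of $p^*\AA$ over $\pi_0R$ follows since base change $-\otimes_R\pi_0R$ is left adjoint to a filtered-colimit-preserving forgetful functor.

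\emph{Step 3: descent.} We have obtained $V\subset S$ with $p^*\AA\simeq \OO_V$. The two pullbacks to $S\times_X S$ give $\OO_{pr_1^{-1}V}\simeq pr_1^*p^*\AA\simeq pr_2^*p^*\AA\simeq \OO_{pr_2^{-1}V}$. By full faithfulness (Lemma \ref{lemma: quasi-affine to coordinate ring is faithful}, with these pullbacks computed by base change as there), $pr_1^{-1}V=pr_2^{-1}V$, and this identification is canonical since both sides are localizations (mapping spaces are contractible or empty). Hence $V$ descends to a quasi-compact open substack $W\subset X$, quasi-compactness being fpqc local. We have $p^*\OO_W\simeq \OO_V\simeq p^*\AA$ as localizations of $\OO_S$. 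Both $\OO_W$ and $\AA$ are localizations of $\OO_X$, so the map $\AA\to\OO_W$ (or its inverse) is determined by descent in $\DAlg(X)$ via Corollary \ref{cor: flat descent for QCohL}. It becomes an equivalence after the conservative functor $p^*$, so $\AA\simeq\OO_W$.
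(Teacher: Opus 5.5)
Your proposal is correct and follows essentially the paper's route. You settle the classical case $\pi_0R$ with Lemma~\ref{lem: affine schemes case of qaff}, lift along the Postnikov tower of $R$ on underlying $\Einfty$-algebras via the deformation argument of \cite{BH17} (this is where eventual connectivity enters), and upgrade to $\DAlg$ with Proposition~\ref{prop: preservation of epimorphisms}. You then pass between $X$ and an affine cover using that localizations have empty-or-contractible mapping spaces and that $\Opqc(-)$ is an fpqc sheaf; the paper does this reduction first and you do it last, which is immaterial.

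One point you leave unresolved in Step~1 is that $p^*\AA$ is compact in $\DAlg(S)$. This follows from the same adjunction argument you give for $-\otimes_R\pi_0R$. Filtered colimits in $\DAlg$ are computed in $\QCoh$, so it suffices that $p_*$ commutes with filtered colimits, which holds, for example, when the cover $p$ is qcqs schematic. The paper's reduction to the affine case also uses this without comment.
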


\begin{proof}
    If $V \in \Opqc(X)$,
    then $\OO_V$ and $\AA$ are localizations of $\OO_X$, so the space of $\OO_X$-isomorphisms between $\OO_V$ and $\AA$ is empty or contractible. 
    By descent, it suffices to check $\AA \simeq \OO_V$ after passing to an fpqc cover.
    Since $\Opqc(-)$ is also an fpqc sheaf, it suffices to show the existence of such a $V$ on an fpqc cover of $X$. Thus we may assume that $X=\Spec(S)$ is an affine scheme. Write $A=\Gamma(\Spec(S),\AA).$

    Following \cite{BH17}, we use deformation theory. Let $S_n=\tau^{\geq -n}S$ be the $n$-th Postnikov truncation of $S$, so that $S_0=H^0(S)$ is a discrete ring and $\Spec(S_0)$ is a classical affine scheme. We also set $A_n=A\otimes_S S_n$, which are localizations over $S_n$. In fact $A_n \in \DAlg_{S_n}$ is compact for all $n$ since the forgetful functor $\DAlg_{S_n}\to \DAlg_S$ preserves sifted colimits.

    Now we use induction on $n.$  In the case $n=0$, Lemma \ref{lem: affine schemes case of qaff} provides an open subscheme $V_0\subset \Spec(S_0)$ and an isomorphism $A_0\simeq \OO_{V_0}$. 
    As $|\Spec(S)|=|\Spec(S_n)|=|\Spec(S_0)|$, we have unique derived open quasi-compact subschemes $V_n\subset \Spec (S_n)$ and $V\subset \Spec(S)$ providing a lift of $V_0$; indeed we can just restrict the structure sheaf of $\Spec(S_n)$ and $\Spec(S)$ to the open subset $V_0.$

    Thus for each $n\geq 0$ we have compact-localizations $\OO_{V_n}$ and $A_n$ with an isomorphism $A_0\simeq \OO_{V_0}$. We also have compact-localizations $\AA$ and $\OO_V$ in $\DAlg(S).$  Consider the algebras $U(\AA)$ and $U(\OO_V)$ in $\CAlg(S)$ which remain localizations, so by \cite[Lemma 2.5]{BH17}, the $\Einfty$-cotangent complexes $L^{\Einfty}_{U(A)/S}=0$ and $L^{\Einfty}_{U(\OO_V)/S}=0$.
    By \cite[Lemma 2.12]{BH17} (which applies since $\AA$ is eventually connective), for each $n>0$ we get compatible isomorphisms $U(A_n)\simeq U(\OO_{V_n})$ and so by Proposition \ref{prop: preservation of epimorphisms}(3), compatible isomorphisms $A_n\simeq\OO_{V_n}.$

    It remains to check that $A=\lim_{n}A_n $ and $\OO_V=\lim_n \OO_{V_n}$. But this isomorphism can be checked at the level of eventually connective complexes where it is always true.
\end{proof}



We will now explain how to deduce point $(2)$ of Theorem \ref{theorem: qaff is compact localizations} from Lemma \ref{lemma: Bhl full derived theorem}.

\begin{corollary}\label{cor: bounded-above compact localization implies quasi-affine}
    Let $X$ be an fpqc-algebraic stack.
    Then the essential image of $\QAff(X)^{op} \to \DAlg(X)$ consists of bounded-above compact-localizations of connective objects of $\DAlg(X)$.
\end{corollary}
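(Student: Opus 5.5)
We prove the two inclusions separately, reducing throughout to Lemma \ref{lemma: global quasi-affine into global sections} and Lemma \ref{lemma: Bhl full derived theorem}, applied over a relative affine base.

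\emph{Every $\OO_V$ lies in the stated class.} Let $f\colon V \to X$ be quasi-affine. After an fpqc cover $S \to X$ by an affine scheme, $V \times_X S$ is a quasi-affine scheme. Pushforward along $f$ commutes with flat base change, and boundedness above can be checked fpqc locally, so $\OO_V$ is bounded above: a quasi-compact quasi-separated scheme has finite cohomological dimension on quasi-coherent sheaves. Set $A := \tau^{\leq 0}\OO_V$, the connective cover of Construction \ref{cons: connective cover dalg}. By Lemma \ref{lemma: global quasi-affine into global sections}, which needs $X$ to admit a flat cover (this is where fpqc-algebraic enters; the reductions in that proof adapt), $V \to \Spec_X(A)$ is a quasi-compact open immersion. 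Here $\Spec_X(A) \to X$ is affine, so $Y := \Spec_X(A)$ is again an fpqc-algebraic stack, and $\QCohL(Y) \simeq \Mod_A(\QCohL(X))$ by Proposition \ref{prop: QCohL on quasi-affine morphism}. Hence $\DAlg(Y) \simeq \DAlg(X)_{A/}$. Lemma \ref{lemma: open substack algebra is compact localization}, applied to $V \in \Opqc(Y)$, shows that $\OO_V$ is a compact-localization of $\OO_Y = A$ in $\DAlg(Y)$. This is exactly the statement that $A \to \OO_V$ is a compact-localization in $\DAlg(X)$.

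\emph{Every object of the class is some $\OO_V$.} Suppose $A \to \AA$ is a compact-localization with $A$ connective and $\AA$ bounded above. Put $Y := \Spec_X(A)$, which is affine over $X$ and hence an fpqc-algebraic stack. Under $\DAlg(Y) \simeq \DAlg(X)_{A/}$, the algebra $\AA$ becomes a compact-localization of $\OO_Y$. It is eventually connective because it is bounded above, using the paper's cohomological indexing. Lemma \ref{lemma: Bhl full derived theorem} then gives $V \in \Opqc(Y)$ with $\AA \simeq \OO_V$. Since $V \to Y \to X$ is a composite of quasi-affine morphisms, it is quasi-affine by Lemma \ref{lemma: composition of quasi-affine morphisms}. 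The pushforward to $X$ of its structure sheaf is $\AA$ as an object of $\DAlg(X)$, so $\AA$ lies in the essential image.

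This direction also gives the final assertion of Theorem \ref{theorem: qaff is compact localizations}. For $\AA = \OO_V$, the first part shows that $\tau^{\leq 0}\AA \to \AA$ is a compact-localization.

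\emph{Main obstacle.} The delicate step is the identification $\DAlg(\Spec_X A) \simeq \DAlg(X)_{A/}$, together with checking that compactness and the localization property transfer along it. Compactness in $\DAlg(X)_{A/}$ is the relative notion used in Definition \ref{defn: localization morphism}, so this should follow from Proposition \ref{prop: QCohL on quasi-affine morphism}(2) and the definition of module $\Theta$-categories. One must also verify that Lemma \ref{lemma: global quasi-affine into global sections} holds over an fpqc-algebraic base rather than only a quasi-geometric one. Its proof only uses a faithfully flat affine cover and flat base change, so it should carry over.
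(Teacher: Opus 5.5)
Your proposal is correct and follows essentially the same route as the paper: pass to the affine morphism $\Spec_X(A)\to X$ for the connective algebra $A$, identify $\DAlg(\Spec_X A)\simeq\DAlg(X)_{A/}$ via the module $\Theta$-category description, and then invoke Lemma~\ref{lemma: Bhl full derived theorem} for one inclusion and Lemma~\ref{lemma: global quasi-affine into global sections} together with Lemma~\ref{lemma: open substack algebra is compact localization} for the other. In that second direction you cite the appropriate lemma, whereas the paper's text points to Lemma~\ref{lemma: Bhl full derived theorem} there, and you are right to flag that Lemma~\ref{lemma: global quasi-affine into global sections} is being applied over an fpqc-algebraic rather than a quasi-geometric base, where its proof still goes through because it only uses a faithfully flat affine cover and flat base change.
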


\begin{proof}
First suppose that $\BB$ is a bounded-above compact localization of $\AA \in \DAlg(X)^{\leq 0}$.
Since $V = \Spec_X\AA \to X$ is an affine morphism, $\DAlg(V) \simeq \DAlg(X)_{\AA/}$ by Proposition \ref{prop: recognition of module category}.
Thus $\mathcal{B}$ is a compact localization of $\OO_{V} \in \DAlg(V)$. 
If $S\to X$ is a faithfully flat cover by an affine scheme, then $S \times_X V \to V$ is also a faithfully flat cover by an affine scheme, so $V$ is an fpqc-algebraic stack.
By Lemma \ref{lemma: Bhl full derived theorem}, there is a quasi-compact open immersion $Y\to V$ such that $\OO_Y \simeq \mathcal{B}$ in $\DAlg(V) \simeq \DAlg(X)_{\mathcal{A}/}$. Then the composite $Y \to X$ is quasi-affine.

Conversely, suppose $Y \to X$ is quasi-affine. By Lemma \ref{lemma: global quasi-affine into global sections}, there is an algebra $\AA \in \DAlg(X)^{\leq 0}$ and a factorization $Y \to \Spec_X(\AA) \to X$ where $Y \to \Spec_X(\AA)$ is a quasi-compact open immersion.
By Lemma \ref{lemma: Bhl full derived theorem}, $\OO_Y$ is a bounded-above compact localization of $\AA$.
\end{proof}

\section{Lurie's Tannakian theorems in the derived setting}\label{section: lurie's theorems}

In this section, we prove analogues of Lurie's Tannakian reconstruction results from \cite{luriedagviii}.
The first statement (Theorem \ref{theorem: quasi-affine diagonal and full faithfulness}) is that $f \mapsto f^*$ is fully faithful when the target prestack has quasi-affine diagonal.
When the target is a geometric stack, we identify the essential image in Theorem \ref{theorem: essential image for relative geometric stack} in terms of flat objects.
In both cases, we also prove relative versions over a base prestack $Z$ with quasi-affine diagonal.

\subsection{Full faithfulness}\label{subsection: fullly faithful}

\begin{proposition}\label{prop: reconstruction for quasi-affine morphisms}
  Let $X$ be a prestack.
  Then the functor
  \[ \QAff(X)^{op} \to \thetaCat_{\QCoh^{\LSym}(X)/}\]
  assigning $f: V \to X$ to $\QCoh^{\LSym}(V)$ is fully faithful.
\end{proposition}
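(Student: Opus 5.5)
The plan is to factor the functor $V \mapsto \QCohL(V)$ through $\DAlg(X)$, using $V \mapsto \OO_V$ followed by $B \mapsto \Mod_B(\QCohL(X))$, and to check that each factor is fully faithful and that the composite agrees with the given functor.

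First, Lemma \ref{lemma: quasi-affine to coordinate ring is faithful} states that $\QAff(X)^{op} \to \DAlg(X)$, $V \mapsto \OO_V$, is fully faithful. This holds for any prestack $X$, since it only uses Lemma \ref{lemma: maps into quasi-affine morphism}.

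Second, Proposition \ref{prop: module cat adjunction} states that $\DAlg(X) = \DAlg(\QCohL(X)) \to \thetaCat_{\QCohL(X)/}$, $B \mapsto \Mod_B(\QCohL(X))$, is a fully faithful left adjoint. The composite
\[ \QAff(X)^{op} \to \DAlg(X) \to \thetaCat_{\QCohL(X)/}\]
is therefore fully faithful.

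It remains to identify this composite with $V \mapsto \QCohL(V)$, naturally in $V$. For a quasi-affine $f: V \to X$, Proposition \ref{prop: QCohL on quasi-affine morphism}(2) gives an equivalence $\Mod_{f_*\OO_V}(\QCohL(X)) \simeq \QCohL(V)$ under $\QCohL(X)$. This equivalence is the counit of the adjunction of Proposition \ref{prop: module cat adjunction}, evaluated at the object $f^*: \QCohL(X) \to \QCohL(V)$; this is how Proposition \ref{prop: recognition of module category} produces it. The right adjoint sends $f^*$ to $f_*\one_{\QCohL(V)} = \OO_V$, and so $V \mapsto \OO_V$ is the composite of $V \mapsto (\QCohL(X) \to \QCohL(V))$ with this right adjoint.

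The argument can then be phrased abstractly. Let $L \dashv R$ be an adjunction with $L$ fully faithful, and let $F$ be a functor whose values lie in the full subcategory on which the counit $LRF \to F$ is an equivalence. Then $F \simeq L\circ (R F)$ naturally. Here $R F$ is $V \mapsto \OO_V$, which is fully faithful by the first step, so $F$ is fully faithful.

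The main obstacle is making this naturality precise. The functor $V \mapsto \QCohL(V)$ must be defined as a functor $\QAff(X)^{op} \to \thetaCat_{\QCohL(X)/}$ (via the right Kan extension and functoriality under $X$), and the identification $R(f^*) \simeq \OO_V$ must be coherent in $V$. Once $F$ is a genuine functor, $RF$ is canonically $V \mapsto f_*\OO_V$ with its $\DAlg$ structure, so this compatibility should be formal.
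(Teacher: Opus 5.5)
Your proposal is correct and follows the paper's proof. The paper likewise composes the fully faithful $V\mapsto\OO_V$ (Lemma~\ref{lemma: quasi-affine to coordinate ring is faithful}) with the fully faithful $\mathcal{A}\mapsto\Mod_{\mathcal{A}}(\QCohL(X))$ (Proposition~\ref{prop: module cat adjunction}), and identifies the composite with $V\mapsto\QCohL(V)$ via Proposition~\ref{prop: QCohL on quasi-affine morphism}(2). Your counit formulation just makes explicit the naturality that the paper leaves implicit.
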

\begin{proof}
    By Lemma \ref{lemma: quasi-affine to coordinate ring is faithful}, assigning quasi-affine $V\to X$ to $\OO_V \in \DAlg(X)$ is fully faithful.
    By Proposition \ref{prop: module cat adjunction}, the functor 
    \[ \DAlg(X) \to \thetaCat_{\QCohL(X)/}\]
    sending $\mathcal A$ to $\Mod_{\mathcal A}(\QCohL(X))$ is fully faithful.  
    Finally, the natural morphism $\Mod_{\OO_V}(\QCohL(X)) \to \QCohL(V)$ under $\QCohL(X)$ is an equivalence by Proposition \ref{prop: QCohL on quasi-affine morphism}(2).
\end{proof}

Our first reconstruction result is the derived version of \cite[Proposition 3.3.11]{luriedagviii}.

\begin{theorem}\label{theorem: quasi-affine diagonal and full faithfulness}
    Let $Z$ be a prestack with quasi-affine diagonal.
    Let $X$ be a prestack with quasi-affine diagonal equipped with a morphism $X \to Z$. Let $Y \to Z$ be a morphism of prestacks.
    Then the association $f \mapsto f^*$ gives a fully faithful embedding
    \[ 
        \Maps_Z(Y,X) \to \Fun_\Theta^{\QCohL(Z)}(\QCohL(X),\QCohL(Y)).
    \]
\end{theorem}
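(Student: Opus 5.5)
We follow Lurie's strategy for \cite[Proposition 3.3.11]{luriedagviii}: reduce to the case where $Y$ is affine, then use the quasi-affine diagonal to turn a map into $X$ into a section of a quasi-affine morphism, where Proposition \ref{prop: reconstruction for quasi-affine morphisms} applies.

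\emph{Reduction to affine $Y$.} Both sides send colimits in $Y$ (over $Z$) to limits. For the left side this is clear. For the right side, $\QCohL$ is the right Kan extension from $\Aff$, so $\QCohL(\ccolim Y_i)\simeq\lim\QCohL(Y_i)$ in $\thetaCat$. Mapping in $\thetaCat_{\QCohL(Z)/}$ into a limit is then the limit of the mapping categories. Writing $Y$ as a colimit of affines over $Z$, we may therefore assume $Y=S=\Spec R$ is affine. The target may a priori be a category rather than a groupoid, so full faithfulness means: for $f,g\colon S\to X$ over $Z$, the map
\[\Maps_{\Maps_Z(S,X)}(f,g)\to\Hom_{\Fun_\Theta^{\QCohL(Z)}}(f^*,g^*)\]
is an equivalence.

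\emph{Key step.} Given $f,g$, form $P:=S\times_{X\times_Z X}X$ via $(f,g)\colon S\to X\times_Z X$ and the relative diagonal. Its $S$-points, that is sections of $P\to S$, are exactly the paths $f\simeq g$ over $Z$. The relative diagonal $X\to X\times_Z X$ is quasi-affine: it factors as $X\to X\times X$ followed by the base change of $Z\to Z\times Z$. Hence $P\to S$ is quasi-affine by Lemma \ref{lemma: composition of quasi-affine morphisms} together with stability under base change.

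By Proposition \ref{prop: reconstruction for quasi-affine morphisms}, sections of $P\to S$ correspond to maps $\QCohL(P)\to\QCohL(S)$ in $\thetaCat_{\QCohL(S)/}$. Equivalently, by Lemma \ref{lemma: maps into quasi-affine morphism}, they correspond to maps $\OO_P\to\OO_S$ in $\DAlg(S)$.

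It remains to identify $\Theta$-natural transformations $f^*\Rightarrow g^*$ under $\QCohL(Z)$ with such maps $\OO_P\to\OO_S$. We expect to do this as in Lurie. By base change for the quasi-affine map $\Delta$ (Proposition \ref{prop: QCohL on quasi-affine morphism} and \cite[Proposition 2.2.2]{GR17I}), $\OO_P\simeq (f,g)^*\Delta_*\OO_X$. We also have $\QCohL(X\times_Z X)\simeq\QCohL(X)\otimes_{\QCohL(Z)}\QCohL(X)$, at least for the purpose of mapping out.

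A transformation $f^*\Rightarrow g^*$ under $\QCohL(Z)$ should then be the same as a factorization of $(f^*,g^*)\colon\QCohL(X\times_ZX)\to\QCohL(S)$ through $\Delta^*$. Since $\QCohL(X)\simeq\Mod_{\Delta_*\OO_X}(\QCohL(X\times_Z X))$, Proposition \ref{prop: module cat adjunction} identifies such factorizations with $\DAlg$-maps $(f,g)^*\Delta_*\OO_X\to\OO_S$.

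\emph{Main obstacle.} The hardest step is the last one: making the identification of 2-morphisms precise in $\thetaCat$. Two points need care.

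First, we must justify the tensor-product description of $\QCohL(X\times_ZX)$, or avoid it. To avoid it, we can instead show directly that a natural transformation of symmetric monoidal functors out of $\QCohL(X)$ is determined by, and equivalent to, the induced cospan datum.

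Second, we must check that the extra $\DAlg$-level data of a $\Theta$-natural transformation adds nothing beyond the compatibility already encoded by $\Mod_{\Delta_*\OO_X}$.

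If the tensor product approach is awkward, the fallback is to mimic Lurie's proof, where full faithfulness is reduced to the statement that the map of mapping spaces is an equivalence. The natural transformations are then handled via the cotensor $\QCohL(S)^{\Delta^1}$. Maps into this cotensor correspond to $\QCohL(S\times\Delta^1)$-type data, and the quasi-affine section argument above can be applied to them.
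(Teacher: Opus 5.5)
Your reduction to affine $Y=S$ matches the paper's. The key step, however, has a genuine gap, and it is the one you flag.

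A factorization of $(f,g)^*\colon\QCohL(X\times_ZX)\to\QCohL(S)$ through $\Delta^*$ does give an identification $f^*\simeq g^*$ under $\QCohL(Z)$: precompose with the two projections. The converse, that every such identification comes from an essentially unique factorization, is what you actually need. It requires $\QCohL(X\times_ZX)$ to corepresent pairs of $\Theta$-functors out of $\QCohL(X)$ that agree on $\QCohL(Z)$. That is a K\"unneth formula $\QCohL(X\times_ZX)\simeq\QCohL(X)\otimes_{\QCohL(Z)}\QCohL(X)$ in $\thetaCat$, at least for maps into $\QCohL(S)$.

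Nothing available here gives this. Proposition~\ref{prop: pushout of theta cats} needs one leg of the square to be quasi-affine. Here only the diagonals of $X$ and $Z$ are assumed quasi-affine, not $X\to Z$. For arbitrary prestacks with quasi-affine diagonal no such formula is at hand. Even where a K\"unneth formula holds for the underlying symmetric monoidal categories, it would still have to be upgraded to the $\DAlg$ level.

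Two further points:
\begin{itemize}
\item A factorization through $\Delta^*$ only ever yields an \emph{invertible} identification. So it cannot be ``the same as'' an arbitrary transformation $f^*\Rightarrow g^*$.
\item The cotensor fallback is too vague to repair the gap.
\end{itemize}
There is also a small slip. It is the absolute diagonal that factors as the relative diagonal followed by a base change of $Z\to Z\times Z$. The relative diagonal is still quasi-affine, but you need a cancellation argument to see it.

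The missing idea is to avoid $X\times_ZX$ altogether. Once $Y=S$ is affine, the diagonal hypotheses make $f,g\colon S\to X$ and $S\to Z$ themselves quasi-affine. So $f,g$ are objects of $\QAff(X)$ and $S$ is an object of $\QAff(Z)$.

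The paper then compares two maps:
\begin{itemize}
\item $\Maps_{\PreStk_{/X}}(f,g)\to\Maps_{\PreStk_{/Z}}(S,S)$;
\item $\Fun_\Theta^{\QCohL(X)}(\QCohL(S),\QCohL(S))\to\Fun_\Theta^{\QCohL(Z)}(\QCohL(S),\QCohL(S))$, with source and target under $\QCohL(X)$ via $g^*$ and $f^*$.
\end{itemize}
The horizontal comparison maps are equivalences by Proposition~\ref{prop: reconstruction for quasi-affine morphisms}, applied once over $X$ and once over $Z$. The mapping categories on the right are groupoids by Proposition~\ref{prop: module cat adjunction}. The fibers over the identity are the path space between $f$ and $g$ in $\Maps_Z(S,X)$, and the space of identifications between $f^*$ and $g^*$ in $\Fun_\Theta^{\QCohL(Z)}(\QCohL(X),\QCohL(S))$. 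Since everything stays inside $\thetaCat$, your worry about extra $\DAlg$-level data never arises.

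You can keep your $P$ if you prefer:
\begin{itemize}
\item base change along the quasi-affine map $S\to Z$ using Proposition~\ref{prop: pushout of theta cats};
\item replace $f,g$ by their quasi-affine graphs $\Gamma_f,\Gamma_g\colon S\to S\times_ZX$;
\item observe that $P\simeq S\times_{\Gamma_g,\,S\times_ZX,\,\Gamma_f}S$;
\item apply Proposition~\ref{prop: module cat adjunction} to $\QCohL(S)\simeq\Mod_{(\Gamma_f)_*\OO_S}(\QCohL(S\times_ZX))$.
\end{itemize}
This, however, is just the paper's argument in another form.
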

\begin{proof}
    Both sides send colimits in $Y$ to limits. Hence we may assume $Y$ is affine,
    so every map $Y \to Z$ and $Y \to X$ is quasi-affine.
    Let $f, f' \in \Maps_Z(Y,X)$.
    Consider the following commutative square:
    \[
\begin{tikzcd}[ampersand replacement=\&,cramped]
	{\Maps_{\PreStk/X}(f,f')} \& {\Fun_\Theta^{\QCohL(X)}(\QCohL(Y),\QCohL(Y))} \\
	{\Maps_{\PreStk/Z}(Y,Y)} \& {\Fun_\Theta^{\QCohL(Z)}(\QCohL(Y),\QCohL(Y))}
	\arrow[from=1-1, to=1-2]
	\arrow[from=1-1, to=2-1]
	\arrow[from=1-2, to=2-2]
	\arrow[from=2-1, to=2-2]
\end{tikzcd}
\]
By Proposition \ref{prop: module cat adjunction}, the mapping categories on the right side of the square are groupoids.
  The horizontal maps are equivalences by Proposition \ref{prop: reconstruction for quasi-affine morphisms}.
  Hence they induce an equivalence between the homotopy fibers of the vertical maps, which proves the claim.
\end{proof}

\begin{remark}\label{rem: full faithfulness and derived affine?}
    The proof of Theorem \ref{theorem: quasi-affine diagonal and full faithfulness} works as long as the diagonal of $Z$ is representable by morphisms $V \to Z$ such that
    \begin{equation*}
        \Maps_Z(V,V) \to \Fun_\Theta^{\QCohL(Z)}(\QCohL(V),\QCohL(V))
    \end{equation*}
    is an equivalence, and similarly for $X$.
    By Proposition \ref{prop: reconstruction for quasi-affine morphisms}, this holds when $V \to Z$ is quasi-affine. In light of \cite[Theorem 3.4]{MM25} and \cite[Theorem 3.16]{MM25}, it is reasonable to expect the same property for most derived affine stacks (in the sense of Mathew--Mondal), although convergence issues make a precise statement slightly delicate.
\end{remark}

Now that Theorem \ref{theorem: quasi-affine diagonal and full faithfulness} is proved, our goal is to calculate the essential image of $f \mapsto f^*$.

\begin{definition}\label{defn: geometric theta-functor}
    Suppose $X \to Z$ and $Y \to Z$ are prestacks.
    Say 
    \[ F \in \Fun_\Theta^{\QCohL(Z)}(\QCohL(X), \QCohL(Y))\]
    is \emph{geometric} if $F$ is equivalent to pullback along some morphism $f \in \Maps_Z(Y,X)$.
\end{definition}

\subsection{Base change and $\Theta$-structure}\label{subsection: base change}

To calculate the essential image in the relative case, we will pass to the case of an affine base scheme using base change for $\Theta$-functors. 
We only work out rudiments of base change along quasi-affine morphisms.

\begin{proposition}\label{prop: pushout of theta cats}
    Let $f:  Z' \to Z$ be a quasi-affine morphism of prestacks and $p: X \to Z$ be any morphism of prestacks.
    Consider the Cartesian square
    \[
\begin{tikzcd}[ampersand replacement=\&,cramped]
	{X'} \& {Z'} \\
	X \& Z
	\arrow["{p'}", from=1-1, to=1-2]
	\arrow["{f_X}"', from=1-1, to=2-1]
	\arrow["\lrcorner"{anchor=center, pos=0.125}, draw=none, from=1-1, to=2-2]
	\arrow["f", from=1-2, to=2-2]
	\arrow["p"', from=2-1, to=2-2]
\end{tikzcd}
    .
    \]
    Then the induced square of pullback functors
    \[
\begin{tikzcd}[ampersand replacement=\&,cramped]
	{\QCohL(Z)} \& {\QCohL(X)} \\
	{\QCohL(Z')} \& {\QCohL(X')}
	\arrow[from=1-1, to=1-2]
	\arrow[from=1-1, to=2-1]
	\arrow[from=1-2, to=2-2]
	\arrow[from=2-1, to=2-2]
\end{tikzcd}
    \]
    is a pushout square in the underlying category of $\thetaCat$. After forgetting to $\CAlg(\PrL)$, the square is the pushout square of \cite[Proposition 3.3.5]{GR17I}.
\end{proposition}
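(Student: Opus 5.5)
The plan is to identify every vertex of the square with a module $\Theta$-category and then check the pushout property using the universal property of Proposition \ref{prop: module cat adjunction}. Set $\AA := f_*\OO_{Z'} \in \DAlg(Z)$. By Proposition \ref{prop: QCohL on quasi-affine morphism}(2), $\QCohL(Z') \simeq \Mod_{\AA}(\QCohL(Z))$ as $\Theta$-categories under $\QCohL(Z)$. The base change $f_X\colon X' \to X$ is again quasi-affine. Pushforward along quasi-affine maps satisfies base change by \cite[Chapter 3, Proposition 2.2.2]{GR17I}, so $f_{X,*}\OO_{X'} \simeq p^*\AA$ in $\DAlg(X)$. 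Applying Proposition \ref{prop: QCohL on quasi-affine morphism}(2) once more gives $\QCohL(X') \simeq \Mod_{p^*\AA}(\QCohL(X))$ under $\QCohL(X)$. One should check that these identifications are compatible with the maps in the square, which I expect to follow from naturality of the counit in Proposition \ref{prop: module cat adjunction}.

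It then remains to prove a general statement. For a $\Theta$-functor $F\colon \CC \to \CC'$ and $B \in \DAlg(\CC)$, the square with vertices $\CC$, $\CC'$, $\Mod_B(\CC)$ and $\Mod_{F(B)}(\CC')$ is a pushout in the underlying $1$-category of $\thetaCat$. Here $F(B)\in\DAlg(\CC')$ is the image of $B$ under the left adjoint $\DAlg(\CC)\to\DAlg(\CC')$ from Remark \ref{remark: morphisms of theta cats}. To see this, fix a $\Theta$-category $\DD$ under $\CC'$, with structure functor $g\colon \CC' \to \DD$. Proposition \ref{prop: module cat adjunction} identifies maps $\Mod_{F(B)}(\CC') \to \DD$ under $\CC'$ with $\Maps_{\DAlg(\CC')}(F(B), g_*\one_\DD)$. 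By adjunction this is $\Maps_{\DAlg(\CC)}(B, F_*g_*\one_\DD)$, which Proposition \ref{prop: module cat adjunction} in turn identifies with maps $\Mod_B(\CC) \to \DD$ under $\CC$. This equivalence of mapping spaces is precisely the pushout property in the slice over $\CC$. The one point needing care is that the left adjoint on $\DAlg$ agrees with the underlying $\QCoh$-pullback. This is built into the left-adjointability of the square in Remark \ref{remark: morphisms of theta cats}: because the square is left adjointable, the left adjoint $\DAlg(\CC)\to\DAlg(\CC')$ lifts $F^*$ on $\CAlg$, and hence on underlying objects. In our case this means $F(\AA)$ is $p^*\AA$.

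For the last sentence of the proposition, forget to $\CAlg(\PrL)$. The same argument applies there: $\CAlg(\CC) \to \CAlg(\PrL)_{\CC/}$, $B \mapsto \Mod_B(\CC)$, is the analogous left adjoint, and it produces the pushout $\QCoh(X) \otimes_{\QCoh(Z)} \Mod_{\AA}(\QCoh(Z)) \simeq \Mod_{p^*\AA}(\QCoh(X))$. This is the square of \cite[Proposition 3.3.5]{GR17I}. The forgetful functor does not preserve pushouts in general, so it is cleaner to state this compatibility separately: both squares are identified with module-category squares, and the forgetful functor sends $\Mod_B$ in $\thetaCat$ to $\Mod_{UB}$ in $\CAlg(\PrL)$.

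I expect the main obstacle to be coherence in the first step. We must show that the equivalences of Proposition \ref{prop: QCohL on quasi-affine morphism}(2) for $Z'\to Z$ and for $X'\to X$ assemble into an equivalence of squares. Concretely, the map $\QCohL(Z')\to\QCohL(X')$ must correspond to the induced map $\Mod_\AA(\QCohL(Z))\to\Mod_{p^*\AA}(\QCohL(X))$. I would handle this with the adjunction of Proposition \ref{prop: module cat adjunction} and the fact that its counit is natural in the target. Both $\Theta$-functors out of $\Mod_\AA(\QCohL(Z))$ under $\QCohL(Z)$ are then determined by a map $\AA \to (p\circ f_X)_*\OO_{X'}$ in $\DAlg(Z)$. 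The two candidate maps are each adjoint to the same canonical map $p^*\AA \simeq f_{X,*}\OO_{X'}$, so they agree.
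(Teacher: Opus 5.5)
Your proposal is correct and is essentially the paper's argument. Both check the pushout property fiberwise over a map out of the top-right corner. Both identify the fibers using Proposition \ref{prop: module cat adjunction} together with Proposition \ref{prop: QCohL on quasi-affine morphism}(2), and both conclude from the base change equivalence $p^*f_*\OO_{Z'}\simeq (f_X)_*\OO_{X'}$ for quasi-affine $f$. The difference is only organizational: you first isolate the general lemma $\Mod_B(\CC)\sqcup_{\CC}\CC'\simeq\Mod_{F(B)}(\CC')$ and then match the squares. The paper instead computes the induced map on fibers directly as precomposition with the base change morphism, which absorbs the coherence check you flag.
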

\begin{proof}
Let $\CC \in \thetaCat$.
Consider the natural functor 
\begin{multline}\label{eq: pushout?}
    \Fun_\Theta(\QCohL(X'),\CC) \\
    \to \Fun_\Theta(\QCohL(X),\CC)\underset{\Fun_\Theta(\QCohL(Z),\CC)}{\times} 
    \Fun_\Theta(\QCohL(Z'),\CC).
\end{multline}  
We wish to show that \eqref{eq: pushout?} is an equivalence on underlying groupoids of the mapping categories. 
 By Proposition \ref{prop: QCohL on quasi-affine morphism}, 
\[ \QCohL(X') \simeq \Mod_{(f_X)_*\OO_{X'}}\QCohL(X) \]
and
\[ \QCohL(Z') \simeq \Mod_{f_*\OO_{Z'}}(\QCohL(Z)).\]
Now fix $h \in \Fun_\Theta(\QCohL(X),\CC)$ and take homotopy fibers of the groupoids in \eqref{eq: pushout?} over $h$. 
By Proposition \ref{prop: module cat adjunction}, the homotopy fiber over $h$ of the left-hand side is 
\[ \Hom_{\DAlg(X)}((f_X)_*\OO_{X'}, h_*\one_\CC),\]
while the homotopy fiber of the right-hand side is 
\[ 
    \Hom_{\DAlg(Z)}(f_*\OO_{Z'}, p_*h_*\one_\CC).
\]
The induced map 
\[ \Hom_{\DAlg(X)}((f_X)_*\OO_{X'}, h_*\one_\CC) \to \Hom_{\DAlg(Z)}(f_*\OO_{Z'}, p_*h_*\one_\CC)
\]
is given by applying $p_*$, then pulling back by the morphism
\[f_*\OO_{Z'} \to  f_*p'_*\OO_{X'} \simeq p_*(f_X)_*\OO_{X'}.\]
After passing through the adjunction 
\[ \Hom_{\DAlg(Z)}(f_*\OO_{Z'},p_*h_*\one_\CC) \simeq \Hom_{\DAlg(X)}(p^*f_*\OO_{Z'}, h_*\one_\CC),\]
the induced map is pullback by the base change morphism 
\[ p^*f_*\OO_{Z'} \to (f_X)_*\OO_{X'}\]
of derived algebras.
Since $f$ is quasi-affine, the base change morphism is an equivalence \cite[Chapter II, Proposition 2.2.2]{GR17I}, so we conclude by Proposition \ref{prop: module cat adjunction}.

After forgetting to $\CAlg(\PrL)$, the square is clearly the pushout square of \cite[Proposition 3.3.5]{GR17I}.
\end{proof}

\begin{definition}\label{def: base change of theta}
    Let $f: Z' \to Z$ be a quasi-affine morphism of prestacks.
    Let $p: X \to Z$ and $q: Y\to Z$ be prestacks,
    and let $X' = X \times_Z Z'$, $Y' = Y \times_Z Z'$.
    For  
    \[ 
        F \in \Fun_\Theta^{\QCohL(Z)}(\QCohL(X), \QCohL(Y)),
    \]
    define 
    \[ F \times_{Z} Z' \in 
    \Fun_\Theta^{\QCohL(Z')}(
    \QCohL(X'),\QCohL(Y'))\]
    to be the $\Theta$-functor pushout of $F$ with $\id_{\QCohL(Z')}$.
\end{definition}

\begin{lemma}\label{lemma: check geometric after all base changes}
    Let $Z$ be a prestack with quasi-affine diagonal. Let $X$ be a prestack with quasi-affine diagonal with a map $X \to Z$,
    and let $Y \to Z$ be a prestack.
    For 
    \[F \in \Fun_\Theta^{\QCohL(Z)}(\QCohL(X),\QCohL(Y)),\] 
    the following are equivalent:
    \begin{enumerate}
        \item $F$ is geometric;
        \item for every map from an affine scheme $Z' \to Z$, the base change 
        \[ F \times_{Z} Z' \in \Fun_\Theta^{\QCohL(Z')}(\QCohL(X \times_ZZ'), \QCohL(Y \times_Z Z'))\]
        is geometric.
    \end{enumerate}
\end{lemma}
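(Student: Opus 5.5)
The implication (1) $\Rightarrow$ (2) is formal, so the plan concentrates on (2) $\Rightarrow$ (1).

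\emph{The easy direction.} Suppose $F \simeq f^*$ for some $f \in \Maps_Z(Y,X)$, and let $Z' \to Z$ be a map from an affine scheme. Since $Z$ has quasi-affine diagonal, $Z' \to Z$ is quasi-affine. Proposition \ref{prop: pushout of theta cats} identifies $\QCohL(X')$ and $\QCohL(Y')$ with the pushouts $\QCohL(X) \sqcup_{\QCohL(Z)} \QCohL(Z')$ and $\QCohL(Y) \sqcup_{\QCohL(Z)} \QCohL(Z')$. Under these identifications, the pushout of $f^*$ with $\id_{\QCohL(Z')}$ is $(f \times_Z Z')^*$, by functoriality of $\QCohL$ on the Cartesian cube. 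So $F \times_Z Z'$ is geometric.

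\emph{The converse.} Suppose every base change $F \times_Z Z'$ is geometric. I would use the fully faithful embedding of Theorem \ref{theorem: quasi-affine diagonal and full faithfulness}, together with the observation that its target is also a limit over affines mapping to $Y$.
\begin{enumerate}
\item Write $Y \simeq \colim_{S \to Y} S$ over affine schemes $S$ mapping to $Y$. The left side $\Maps_Z(Y,X)$ becomes $\lim_S \Maps_Z(S,X)$, and the right side $\Fun_\Theta^{\QCohL(Z)}(\QCohL(X),\QCohL(Y))$ becomes $\lim_S \Fun_\Theta^{\QCohL(Z)}(\QCohL(X),\QCohL(S))$. The map between them is fully faithful levelwise by Theorem \ref{theorem: quasi-affine diagonal and full faithfulness}. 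A limit of fully faithful functors has essential image exactly the compatible families that lie levelwise in the essential image. Hence it suffices to show that each composite $F_S := s^* \circ F$, for $s \colon S \to Y$, is geometric.
\item Fix such $S$ and put $Z' = S$, mapping to $Z$ via $S \to Y \to Z$. By hypothesis $F \times_Z S$ is geometric, say equal to $g^*$ for some $g \colon Y \times_Z S \to X \times_Z S$ over $S$. The map $s$ determines a section $\sigma \colon S \to Y \times_Z S$. Set $f_S := \mathrm{pr}_X \circ g \circ \sigma \colon S \to X$.
\item It remains to check $F_S \simeq f_S^*$. Using the pushout description of Proposition \ref{prop: pushout of theta cats}, $F \times_Z S$ restricted along $\QCohL(X) \to \QCohL(X \times_Z S)$ is $\mathrm{pr}_Y^* \circ F$. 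Pulling back further along $\sigma^*$ gives $\sigma^* \mathrm{pr}_Y^* F = s^* F = F_S$. On the other side, the same composite equals $\sigma^* g^* \mathrm{pr}_X^* = f_S^*$.
\end{enumerate}

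\emph{The main obstacle.} The delicate point is Step 1: the limit description of the mapping categories in $\thetaCat$ must be compatible with the essential-image statement. This holds because $\QCohL(Y) = \lim_S \QCohL(S)$ in $\thetaCat_{\QCohL(Z)/}$, so mapping categories into it are limits. A second point needs care: Step 3 must be carried out coherently, as equivalences of $\Theta$-functors under $\QCohL(Z)$ and not merely of underlying functors. Since all the $\Theta$-functor categories involved are groupoids (Proposition \ref{prop: module cat adjunction}, after reducing to affine $S$, as in the proof of Theorem \ref{theorem: quasi-affine diagonal and full faithfulness}), producing the equivalence at the level of the pushout's universal property suffices.
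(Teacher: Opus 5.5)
Your proof is correct, but it is organized differently from the paper's.

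\textbf{The paper's route.} The paper writes $Y\simeq\operatorname{colim}_{Z'}Y\times_Z Z'$ over affines $Z'\to Z$. It then uses Proposition \ref{prop: pushout of theta cats} as an identification of mapping groupoids:
\[
\Fun_\Theta^{\QCohL(Z)}(\QCohL(X),\QCohL(Y))^{\simeq}\simeq\lim_{Z'}\Fun_\Theta^{\QCohL(Z')}(\QCohL(X\times_Z Z'),\QCohL(Y\times_Z Z'))^{\simeq}.
\]
Under this equivalence $F$ is literally the family $(F\times_Z Z')_{Z'}$. Combined with $\Maps_Z(Y,X)\simeq\lim_{Z'}\Maps_{Z'}(Y\times_Z Z',X\times_Z Z')$ and Theorem \ref{theorem: quasi-affine diagonal and full faithfulness} over each $Z'$, the lemma becomes formal.

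\textbf{Your route.} You index instead over affines $s\colon S\to Y$, so you only need each $s^*F$ to be geometric. You obtain this from the single base change along $S\to Y\to Z$ together with the section $\sigma$. Proposition \ref{prop: pushout of theta cats} is used only to define $F\times_Z S$ and its compatibility square $(F\times_Z S)\circ\mathrm{pr}_X^*\simeq\mathrm{pr}_Y^*\circ F$, not to identify the terms of the limit. As a byproduct, hypothesis (2) is only needed for affines mapping to $Z$ through $Y$.

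\textbf{Comparison.} The paper's route avoids the section trick and exhibits $\Theta$-functors under $\QCohL(Z)$ as satisfying descent along the base, which is the more conceptual statement. In both proofs the coherence of the local lifts is handled by the same principle. A limit of fully faithful maps from spaces into groupoid cores has image consisting exactly of the families whose components lie in the image.

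\textbf{Two small corrections to your final paragraph}, neither affecting the argument.

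First, Lemma \ref{lemma: theta-categories have small limits} only provides limits in the underlying $1$-category of $\thetaCat$. So the limit formula in your Step 1 should be stated on groupoid cores. That suffices, since membership of $F$ in the essential image is a statement about cores.

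Second, the claim that all the $\Theta$-functor categories involved are groupoids is not justified. Proposition \ref{prop: module cat adjunction} applies when the \emph{source} is a module $\Theta$-category over the base, as for $\QCohL(S)$ over $\QCohL(X)$ in the proof of Theorem \ref{theorem: quasi-affine diagonal and full faithfulness}. It does not apply to $\Fun_\Theta^{\QCohL(Z)}(\QCohL(X),\QCohL(S))$ unless $X\to Z$ is quasi-affine. The claim is also unnecessary. Step 3 already yields $s^*F\simeq f_S^*$ in $\Fun_\Theta^{\QCohL(Z)}(\QCohL(X),\QCohL(S))$, because $\sigma^*$ and the equivalence $F\times_Z S\simeq g^*$ live under $\QCohL(S)$, hence under $\QCohL(Z)$.
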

\begin{proof}
    By definition, if $F \simeq f^*$, then $F \times_Z Z' \simeq (f \times \id_{Z'})^*$.
    Conversely, suppose that $F \times_{Z} Z'$ is geometric whenever $Z' \to Z$ is a map from an affine scheme.
    For each such $Z'$, such a map $f_{Z'}: Y \times_{Z} Z' \to X \times_Z Z'$ is essentially unique by Theorem \ref{theorem: quasi-affine diagonal and full faithfulness}. 
    Since colimits are universal in an $\infty$-topos \cite[Theorem 6.1.0.6]{HTT}\footnote{The key is to write $Y=\ccolim_{Z'\in \Aff, Z'\to Z} Y\times_Z Z'.$},
    \[ \Maps_Z(Y,X) = \lim_{Z'\in \Aff, Z' \to Z} \Maps_{Z'}(Y \times_Z Z', X \times_Z Z').\]
   Therefore there is an essentially unique $f \in \Maps_Z(Y,X)$ such that $(f \times_{Z} Z')^* \simeq F \times_Z Z'$ for all affine schemes $Z'$ mapping to $Z$. It suffices to show that $f^*\simeq F.$
    
    Since $\QCohL$ is right Kan extended from affines, it sends colimits of prestacks to limits, 
    so at the level of underlying groupoids of the categories of $\Theta$-functors,
    \begin{align*}
        \Fun_\Theta^{\QCohL(Z)}&(\QCohL(X), \QCohL(Y))^{\simeq} \\
    &\simeq
    \lim_{\substack{Z' \in \Aff\\Z' \to Z}} \Fun_\Theta^{\QCohL(Z)}(\QCohL(X), \QCohL(Y\times_Z Z'))^{\simeq}
    \\ 
    &\simeq 
    \lim_{\substack{Z' \in \Aff\\Z' \to Z}} \Fun_\Theta^{\QCohL(Z')}(\QCohL(X\times_Z Z'), \QCohL(Y \times_Z Z'))^{\simeq}
    \end{align*}
    where in the last equivalence we have used Proposition \ref{prop: pushout of theta cats}.
    Hence the system of compatible equivalences $(f\times_Z Z')^* \simeq F \times_Z Z'$
    gives an essentially unique equivalence $f^* \simeq F$.
\end{proof}

\subsection{The essential image for geometric stacks}\label{subsection: essential image}

The goal of this section is to calculate the essential image of $f \mapsto f^*$ when the target is a geometric stack. 

The following lemma gives a criterion for a morphism of $\Theta$-categories to be geometric using flat descent. It is a derived version of \cite[Proposition 3.4]{BH17}, which, in turn, follows ideas of Lurie \cite{luriedagviii}. 

\begin{lemma}\label{lemma: bhl recognition geometric}
Let $T$ be an affine scheme.  Let $X$ be a quasi-geometric stack over $T$ and let $S$ be an affine scheme over $T$. For 
    \[F\in \Fun_\Theta^{\QCohL(T)}(\QCohL(X),\QCohL(S)),\] the following are equivalent: 
    \begin{enumerate}
        \item $F$ is geometric;
        \item There exist faithfully flat quasi-affine maps $V\to S$ and $W\to X$ with $W$ affine and a map $\OO_V\to F(\OO_W)$ in $\DAlg(S)$.
    \end{enumerate}
\end{lemma}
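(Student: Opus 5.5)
For (1)$\Rightarrow$(2), suppose $F\simeq f^*$ for some $f\colon S\to X$ over $T$. Since $X$ is quasi-geometric, choose a faithfully flat morphism $W\to X$ with $W$ affine; it is quasi-affine because $X$ has quasi-affine diagonal. Set $V := W\times_X S$. Then $V\to S$ is faithfully flat and quasi-affine by base change. The required map $\OO_V\to F(\OO_W)=f^*\OO_W$ is the inverse of the base change equivalence $f^*\OO_W \simeq \OO_V$, which holds by \cite[Proposition 2.2.2]{GR17I} since $W\to X$ is quasi-affine.

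For (2)$\Rightarrow$(1) I follow \cite[Proposition 3.4]{BH17} and descend along the Čech nerves. Let $W^\bullet$ be the Čech nerve of $W\to X$ and $V^\bullet$ that of $V\to S$; every term is quasi-affine over $X$ (resp. $S$). By Corollary \ref{cor: flat descent for QCohL}, $\QCohL(X)\simeq\mathrm{Tot}\,\QCohL(W^\bullet)$ and $\QCohL(S)\simeq\mathrm{Tot}\,\QCohL(V^\bullet)$. Since the maps $W^n\to X$ are quasi-affine, Proposition \ref{prop: QCohL on quasi-affine morphism} gives $\QCohL(W^n)\simeq\Mod_{\OO_{W^n}}(\QCohL(X))$, and by Lemma \ref{lemma: quasi-affine to coordinate ring is faithful} we have $\OO_{W^n}\simeq\OO_W^{\otimes n+1}$.

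The key step is to build a map of cosimplicial objects. Write $F$ as the triple $(F,G,\alpha)$ of Remark \ref{remark: morphisms of theta cats}. The given map $\OO_V\to F(\OO_W)$ induces maps $\OO_V^{\otimes n+1}\to F(\OO_W)^{\otimes n+1}\simeq F(\OO_W^{\otimes n+1})$ in $\DAlg(S)$, using that $F$ is symmetric monoidal and compatible with $\DAlg$. These assemble into a map of cosimplicial derived algebras $\OO_{V^\bullet}\to F(\OO_{W^\bullet})$. Composing the induced $\Theta$-functors, I get levelwise $\Theta$-functors
\[
F_n\colon \QCohL(W^n)\simeq\Mod_{\OO_W^{\otimes n+1}}(\QCohL(X))\to\Mod_{F(\OO_W^{\otimes n+1})}(\QCohL(S))\to\Mod_{\OO_{V}^{\otimes n+1}}(\QCohL(S))\simeq \QCohL(V^n).
\]
Here the first arrow is the base change of $F$ and the second is restriction along the cosimplicial algebra map.

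Each $F_n$ is then a $\Theta$-functor between quasi-coherent sheaves on quasi-affine schemes over $T$. Affine and quasi-affine $T$-schemes have quasi-affine diagonal, so by Proposition \ref{prop: reconstruction for quasi-affine morphisms} together with Proposition \ref{prop: module cat adjunction}, $F_n$ is classified by a map of derived algebras $\OO_{W^n}\to\OO_{V^n}$ relative to $T$. Since $W^n$ is quasi-affine over the affine scheme $T$, Lemma \ref{lemma: maps into quasi-affine morphism} turns this into a morphism $f_n\colon V^n\to W^n$ over $T$ with $f_n^*\simeq F_n$. The $f_n$ are compatible cosimplicially by full faithfulness (Theorem \ref{theorem: quasi-affine diagonal and full faithfulness}). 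Passing to geometric realizations in fpqc sheaves, $|V^\bullet|\simeq S$ and $|W^\bullet|\simeq X$, which produces $f\colon S\to X$. Finally $f^*\simeq\mathrm{Tot}(f_n^*)\simeq F$, using that $F$ is recovered from the $F_n$ via the descent equivalences.

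I expect the main obstacle to be the last identification: checking that $F$ really is $\mathrm{Tot}$ of the $F_n$ and that the $F_n$ form a coherent cosimplicial diagram. The plan for this is to use Proposition \ref{prop: module cat adjunction} to encode everything as the single cosimplicial map $\OO_{V^\bullet}\to F(\OO_{W^\bullet})$ in $\DAlg(S)$. Coherence of the $F_n$ then follows from functoriality of $B\mapsto\Mod_B$. The compatibility $F\simeq\mathrm{Tot}(F_n)$ then reduces to naturality of the unit maps $\QCohL(X)\to\Mod_{\OO_{W^n}}(\QCohL(X))$ under $F$.
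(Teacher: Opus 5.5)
Your strategy is the paper's own (following \cite{BH17}). You lift $F$ levelwise along the \v{C}ech nerves via module $\Theta$-categories, recognize each level as geometric, and totalize using Corollary~\ref{cor: flat descent for QCohL}. Your (1)$\Rightarrow$(2) is fine. The gap is the second arrow in $F_n$. Restriction of scalars along $\OO_V^{\otimes n+1}\to F(\OO_W^{\otimes n+1})$ is only lax symmetric monoidal: it sends the unit $F(\OO_W^{\otimes n+1})$ to itself, not to $\OO_V^{\otimes n+1}$. So $F_n$ is not a $\Theta$-functor, and Propositions~\ref{prop: module cat adjunction} and~\ref{prop: reconstruction for quasi-affine morphisms} do not apply to it. The $\Theta$-functor this algebra map actually induces is extension of scalars $\Mod_{\OO_V^{\otimes n+1}}(\QCohL(S))\to\Mod_{F(\OO_W^{\otimes n+1})}(\QCohL(S))$. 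That points the wrong way to compose with your first arrow.

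With the arrow as written this cannot be repaired. Take $V=S$ and the unit map $\OO_S\to F(\OO_W)$, which exists because $\OO_S$ is initial in $\DAlg(S)$; then condition (2) holds for every $F$. Yet non-geometric $\Theta$-functors exist. For instance, take $T=S=\Spec\mathbb{Q}$. There the derived symmetric algebra monad agrees with the free $\mathbb{E}_\infty$-monad, so $\Theta$-functors are just colimit-preserving symmetric monoidal functors. The even shearing $(M_n)_n\mapsto\bigoplus_n M_n[2n]$ from $\QCoh(B\mathbb{G}_{m,\mathbb{Q}})$ to $\Mod_{\mathbb{Q}}$ is such a functor and does not preserve connective objects.

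The paper's proof contains the same slip: it writes a $\Theta$-functor $\Mod_{F(\OO_W)}(\QCohL(S))\to\Mod_{\OO_V}(\QCohL(S))$ ``induced by'' $\OO_V\to F(\OO_W)$. So the lemma should be read with a map $F(\OO_W)\to\OO_V$ in $\DAlg(S)$. That is all the later applications use, since there $F(\OO_W)\simeq\OO_V$, and your (1)$\Rightarrow$(2) also produces an equivalence. In that form, replace restriction by extension of scalars along $F(\OO_W)^{\otimes n+1}\to\OO_V^{\otimes n+1}$. Then $F_n$ is a $\Theta$-functor lifting $F$, and the rest of your argument goes through as in the paper: identifying $F_n\simeq f_n^*$, cosimplicial compatibility via full faithfulness, and totalization.
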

\begin{proof}
    This follows mutatis mutandis from \cite[Proposition 3.4]{BH17}, but we indicate the necessary modifications. 
    Clearly (1) implies (2). To prove the converse, assume that $V \to S$ and $W\to X$ are as in (2). 
    Since $W\to X$ is quasi-affine,
    $\QCohL(W) \simeq \Mod_{\OO_W}(\QCohL(X))$.
    Hence $F$ induces a morphism of $\Theta$-categories
    \[ \QCohL(W) \simeq \Mod_{\OO_W}(\QCohL(X)) \to \Mod_{F(\OO_W)}(\QCohL(S)).\]
    On the other hand, the map of derived $\OO_S$-algebras $\OO_V\to F(\OO_W)$ induces a map of $\Theta$-categories 
    \[ 
        \Mod_{F(\OO_W)}(\QCohL(S)) \to \Mod_{\OO_V}(\QCohL(S)) \simeq \QCohL(V).
    \]
    By composition we get a functor $\QCohL(W)\to \QCohL(V)$ which gives us a lift of $F$. Since $W$ is affine, Proposition \ref{prop: module cat adjunction}
    shows that $\QCohL(W) \to \QCohL(V)$ is equivalent to pullback by a morphism $f\colon V\to W$. 
    Arguing similarly for the \v{C}ech nerves $W^\bullet$ of $W\to X$ and $V^\bullet$ of $V\to S$, we get geometric morphisms of cosimplicial objects in $\Theta$-categories $\QCohL(W^\bullet)\to \QCohL(V^\bullet)$.
    By Corollary \ref{cor: flat descent for QCohL},
    the totalizations of $\QCohL(W^\bullet)$ and $\QCohL(V^\bullet)$ are $\QCohL(X)$ and $\QCohL(S)$ respectively. 
    Since $X$ is an fpqc stack, 
    by totalizing, we obtain a geometric morphism 
    $S \to X$ inducing the map of $\Theta$-categories
    \[ 
    F: \QCohL(X)=\mathrm{Tot}(\QCohL(W^\bullet))\to \mathrm{Tot}(\QCohL(V^\bullet)) = \QCohL(S).
    \]
\end{proof}

We also need a relative version of Lemma \ref{lemma: bhl recognition geometric}.

\begin{lemma}\label{lemma: relative recognition geometric}
    Let $Z$ be an fpqc sheaf with quasi-affine diagonal, $X \to Z$ be a relative quasi-geometric stack,
    and $S$ be an affine scheme with a map $S \to Z$.
    For 
    \[F \in \Fun_\Theta^{\QCohL(Z)}(\QCohL(X),\QCohL(S)),\] 
    the following are equivalent:
    \begin{enumerate}
        \item $F$ is geometric;
        \item There exist faithfully flat quasi-affine maps $V \to S$ and $W \to X$ 
        where $W \to Z$ is affine and a map $\OO_V \to F(\OO_W)$ in $\DAlg(S)$.
    \end{enumerate}
\end{lemma}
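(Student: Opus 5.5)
The implication (1) $\Rightarrow$ (2) is immediate. Suppose $F \simeq f^*$ for some $f \in \Maps_Z(S,X)$. By Definition \ref{defn: relative qg stack}, choose an affine morphism $W \to Z$ together with a faithfully flat map $W \to X$ over $Z$; this map is quasi-affine. Set $V = S\times_X W$. Then $V \to S$ is faithfully flat and quasi-affine by base change. The map $\OO_V \to F(\OO_W)$ is supplied by the base change morphism $f^*(\OO_W) \to \OO_V$, which is an equivalence because $W \to X$ is quasi-affine (as in the proof of Proposition \ref{prop: pushout of theta cats}); we take its inverse.

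For (2) $\Rightarrow$ (1), we follow the proof of Lemma \ref{lemma: bhl recognition geometric} and only change the step where affineness of $W$ was used. Since $W \to X$ is quasi-affine, Proposition \ref{prop: QCohL on quasi-affine morphism} gives $\QCohL(W) \simeq \Mod_{\OO_W}(\QCohL(X))$. Therefore $F$ induces a $\Theta$-functor $\QCohL(W) \to \Mod_{F(\OO_W)}(\QCohL(S))$ under $\QCohL(Z)$. Composing with the functor $\Mod_{F(\OO_W)}(\QCohL(S)) \to \Mod_{\OO_V}(\QCohL(S)) \simeq \QCohL(V)$ induced by $\OO_V \to F(\OO_W)$, we obtain a $\Theta$-functor $G\colon \QCohL(W) \to \QCohL(V)$ under $\QCohL(Z)$. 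The affine morphism $W \to Z$ satisfies $\QCohL(W) \simeq \Mod_{\OO_W}(\QCohL(Z))$ with $\OO_W \in \DAlg(Z)$, now pushed forward to $Z$. Proposition \ref{prop: module cat adjunction} then identifies $G$ with a point of $\Hom_{\DAlg(Z)}(\OO_W, g_*\OO_V)$, where $g\colon V \to Z$. By Lemma \ref{lemma: maps into quasi-affine morphism}, applied to the affine (hence quasi-affine) morphism $W \to Z$, this point is a morphism $V \to W$ over $Z$. Hence $G$ is geometric.

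Next we pass to \v{C}ech nerves. Let $W^\bullet$ be the \v{C}ech nerve of $W \to X$ and $V^\bullet$ the \v{C}ech nerve of $V \to S$. Each $W^n \to X$ is quasi-affine, being an iterated fiber product of quasi-affine maps (Lemma \ref{lemma: composition of quasi-affine morphisms}). Each $W^n \to Z$ is quasi-affine, since it factors through $W \times_Z \cdots \times_Z W$ via a base change of the quasi-affine diagonal of $X \to Z$. Each $V^n$ is quasi-affine over the affine scheme $S$. The construction above is functorial in the coordinate rings: $\OO_{W^n} \simeq \OO_W^{\otimes_{\OO_X}(n+1)}$ and $F$ is symmetric monoidal, so we get compatible maps $\OO_{V^n} \to F(\OO_{W^n})$ and $\Theta$-functors $\QCohL(W^n) \to \QCohL(V^n)$ under $\QCohL(Z)$. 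Each of these is geometric by Proposition \ref{prop: reconstruction for quasi-affine morphisms}, or equivalently by the module argument above applied to the quasi-affine map $W^n \to Z$ together with Lemma \ref{lemma: maps into quasi-affine morphism}. Full faithfulness, from Theorem \ref{theorem: quasi-affine diagonal and full faithfulness} or directly from Proposition \ref{prop: reconstruction for quasi-affine morphisms}, makes these morphisms $V^n \to W^n$ over $Z$ assemble into a map of simplicial prestacks.

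The final step is descent. Since $X$ and $S$ are fpqc sheaves, $|V^\bullet| \simeq S$ and $|W^\bullet| \simeq X$, so the maps $V^\bullet \to W^\bullet$ give a morphism $f\colon S \to X$ over $Z$. Corollary \ref{cor: flat descent for QCohL} identifies $\QCohL(X) \simeq \mathrm{Tot}\,\QCohL(W^\bullet)$ and $\QCohL(S) \simeq \mathrm{Tot}\,\QCohL(V^\bullet)$, and on totalizations the induced functor is both $F$ and $f^*$.

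The main obstacle is coherence. We need the levelwise geometric $\Theta$-functors to form an actual map of cosimplicial diagrams compatible with $F$ under $\QCohL(Z)$, not merely levelwise equivalences. We would handle this through full faithfulness: the mapping spaces in question are discrete-in-the-relevant-sense groupoids equivalent to mapping spaces of prestacks over $Z$, so the cosimplicial diagram of $\Theta$-functors lifts uniquely to a diagram of prestacks. If the rigidification over $Z$ proves awkward, we would fall back on Lemma \ref{lemma: check geometric after all base changes}: use Proposition \ref{prop: pushout of theta cats} to base change to affine $Z' \to Z$, where the base is affine and Lemma \ref{lemma: bhl recognition geometric} applies directly. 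This works because $W\times_Z Z'$ is then affine and the hypotheses in (2) are stable under such base change.
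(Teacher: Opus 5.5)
Your proof is correct, but your main route for (2)$\Rightarrow$(1) differs from the paper's. The paper's route is essentially your fallback. It invokes Lemma~\ref{lemma: check geometric after all base changes}, which rests on the base-change formalism of Proposition~\ref{prop: pushout of theta cats}, to reduce to $Z$ affine. It then applies the absolute Lemma~\ref{lemma: bhl recognition geometric}, and finally uses Theorem~\ref{theorem: quasi-affine diagonal and full faithfulness} to see that the resulting morphism lies over $Z$.

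You instead rerun the descent argument of Lemma~\ref{lemma: bhl recognition geometric} directly over $Z$. You replace ``$W$ affine'' by ``$W\to Z$ affine'' and identify each levelwise $\Theta$-functor $\QCohL(W^n)\to\QCohL(V^n)$ under $\QCohL(Z)$ using Proposition~\ref{prop: reconstruction for quasi-affine morphisms} relative to $Z$. This has two costs:
\begin{itemize}
\item You must check that every $W^n$ and $V^n$ is quasi-affine over $Z$. You do this correctly, using the quasi-affine diagonals of $X\to Z$ and of $Z$.
\item You must face the cosimplicial coherence that the paper leaves implicit inside Lemma~\ref{lemma: bhl recognition geometric}. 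Your resolution is sound: the relevant hom-categories are groupoids, so the fully faithful functor $\QAff(Z)^{op}\to\thetaCat_{\QCohL(Z)/}$ lifts the whole map of cosimplicial diagrams. Then $F$ and $f^*$ give equivalent cones over $\QCohL(V^\bullet)$, hence agree on totalizations.
\end{itemize}

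In return you gain three things:
\begin{itemize}
\item Compatibility with $\QCohL(Z)$ comes for free.
\item You never transport hypothesis (2) along $Z'\to Z$. When $Z$ merely has quasi-affine diagonal, $S\times_Z Z'$ is in general only quasi-affine. So the reduction to Lemma~\ref{lemma: bhl recognition geometric}, in the paper and in your fallback alike, tacitly needs a further (easy) Zariski localization on $S\times_Z Z'$.
\item Your argument only uses that $W\to Z$ is quasi-affine, not affine.
\end{itemize}
The paper's route is shorter, given the base-change machinery already in place.
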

\begin{proof}
    (1) implies (2). Now suppose that we are given $W \to X, V\to S, \OO_V \to F(\OO_W)$ as in (2). 
    By Lemma \ref{lemma: check geometric after all base changes}, 
    it suffices to show that $F \times_Z Z'$ is geometric for all affine schemes $Z'$ mapping to $Z$. Thus, we may assume $Z$ is an affine scheme.
    Then $X$ is a quasi-geometric stack and $S$ is an affine scheme,
    so by Lemma \ref{lemma: bhl recognition geometric}, there exists $f: S \to X$ such that $F \simeq f^*$.
    As $F$ is compatible with $\QCohL(Z)$, full faithfulness (Theorem \ref{theorem: quasi-affine diagonal and full faithfulness}) shows that $F$ is equivalent to pullback along a map over $Z$.
\end{proof}

We can now calculate the essential image of $f \mapsto f^*$ when the target is a geometric stack: the essential image consists of maps preserving connective objects and flat objects, as in \cite[Theorem 9.3.0.3]{Lurie-SAG} and \cite[Theorem 3.4.2]{luriedagviii}.

\begin{theorem}\label{theorem: essential image for relative geometric stack}
    Let $Z$ be an fpqc sheaf with quasi-affine diagonal,
    $X \to Z$ be a relative geometric stack, and $Y\to Z$ be a morphism of prestacks.
    For 
    \[F \in \Fun_\Theta^{\QCohL(Z)}(\QCohL(X),\QCohL(Y)),\] 
    the following are equivalent:
    \begin{enumerate}
       \item $F$ is geometric;
       \item $F$ preserves connective objects and flat objects.
    \end{enumerate}
\end{theorem}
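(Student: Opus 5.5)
The direction (1) $\Rightarrow$ (2) is immediate: if $F \simeq f^*$ for $f \in \Maps_Z(Y,X)$, then $f^*$ preserves connective and flat objects, since both properties are defined by pulling back to affine schemes and pullback of modules along maps of animated rings preserves them. For the converse, I will first reduce to the case where $Y = S$ is affine. Both sides of the correspondence send colimits in $Y$ to limits: $\Maps_Z(-,X)$ does, and so does $\Fun_\Theta^{\QCohL(Z)}(\QCohL(X),\QCohL(-))$, since $\QCohL$ is right Kan extended from affines. Moreover, for $s\colon S \to Y$ the composite $s^*F$ still preserves connective and flat objects. Full faithfulness (Theorem \ref{theorem: quasi-affine diagonal and full faithfulness}) then makes the resulting local maps $S \to X$ glue uniquely, so it suffices to treat $Y = S$ affine over $Z$.

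With $Y = S$ affine, I will verify condition (2) of Lemma \ref{lemma: relative recognition geometric}. Since $X \to Z$ is relative geometric, choose $W \to Z$ affine with $W \to X$ faithfully flat; this map is affine because $X \to Z$ has affine diagonal. Put $\AA := \OO_W \in \DAlg(X)$, which is connective and flat over $\OO_X$ since $W \to X$ is affine faithfully flat. Set $B := \Gamma(S, F(\AA))$. Because $F$ is a $\Theta$-functor, $F(\AA)$ is a derived $\OO_S$-algebra, so $B \in \DAlg_{\OO(S)}$, and the hypothesis makes it connective and flat. Take $V = \Spec B \to S$. This is affine and flat, and the identity gives a map $\OO_V \to F(\OO_W)$ in $\DAlg(S)$. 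It remains to see that $V \to S$ is faithfully flat, i.e.\ that $\pi_0 B$ is faithfully flat over $\pi_0 \OO(S)$.

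The main obstacle is this faithfulness step, and I would handle it as Lurie does (\cite[Theorem 3.4.2]{luriedagviii}). A flat module $M$ over a connective ring $R$ is faithfully flat iff $M \otimes_R N \neq 0$ for every nonzero discrete $R$-module $N$, equivalently iff tensoring with $M$ is conservative on $\Mod_R^{\leq 0}$. First, faithful flatness of $\AA$ means $\AA \otimes -$ is conservative on $\QCoh(X)$, which I expect follows from descent via Corollary \ref{cor: flat descent for QCohL}. Given a nonzero discrete $\OO(S)$-module $N$ with $B \otimes N = 0$, let $G$ be the right adjoint of $F$. By the projection formula, $\AA \otimes G(N) \simeq G(F(\AA)\otimes N) = 0$. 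This projection formula is not automatic; the route I would use is to compute $G$ through the flat affine cover, writing $F(\AA)$-modules as modules over the $\Theta$-functor $\QCohL(W)\to\Mod_{F(\AA)}$. The vanishing forces $G(N)=0$, hence $\Maps(F\OO_X, N)=\Maps(\OO_X,G(N))=0$, contradicting $F(\OO_X)=\OO_S$ and $N\ne 0$ (mapping into the discrete module). Hence $V \to S$ is faithfully flat. Lemma \ref{lemma: relative recognition geometric} then shows that $F$ is geometric, using that $Z$ is an fpqc sheaf with quasi-affine diagonal. Note that only the existence of a map $\OO_V \to F(\OO_W)$ is needed there, which is why the $\Theta$-structure, guaranteeing that $F(\OO_W)$ is a derived algebra rather than merely an $\Einfty$-algebra, is essential in this step.
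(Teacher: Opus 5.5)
\textbf{There is a gap in the faithfulness step.} Your reduction to $Y=S$ affine, the choice of a faithfully flat affine $W\to X$ with $W\to Z$ affine, the construction of $V=\Spec B$, and the final appeal to Lemma~\ref{lemma: relative recognition geometric} all agree with the paper. The problem is proving that $V\to S$ is faithfully flat.

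The projection formula $\OO_W\otimes G(N)\simeq G(F(\OO_W)\otimes N)$ that you invoke is only formal when $\OO_W$ is dualizable. It also holds when $G$ preserves colimits and $\OO_W$ is a colimit of dualizables. Here $\OO_W$ is merely flat, and for a general geometric stack $\QCoh(X)$ need not be generated by dualizable objects, so $G$ need not preserve filtered colimits.

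Worse, consider exactly the $N$ you need: discrete, nonzero, with $F(\OO_W)\otimes N\simeq 0$. The projection formula at such $N$ says $\OO_W\otimes G(N)\simeq 0$. By conservativity of $\OO_W\otimes -$, this is literally equivalent to the desired conclusion $G(N)\simeq 0$, so the step assumes what it is meant to prove.

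Your proposed fix of computing $G$ through the cover does not escape this. Identifying $G$ via the \v{C}ech nerves of $W\to X$ and $V\to S$ requires $\QCoh(S)\simeq \mathrm{Tot}\,\QCoh(V^\bullet)$, i.e.\ descent along $V\to S$, which is again the faithful flatness you are trying to prove. This is also not how Lurie argues.

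\textbf{The missing idea is the cofiber criterion}, which the paper uses in its footnote citing \cite[Lemma D.4.4.3]{Lurie-SAG}. A flat map $R\to A$ of connective rings is faithfully flat if and only if $\mathrm{cofib}(R\to A)$ is flat. Both conditions say that $N\to A\otimes_R N$ is injective for every discrete $R$-module $N$.

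The argument then runs as follows.
\begin{enumerate}
\item Checking on affines, $Q=\mathrm{cofib}(\OO_X\to \OO_W)$ is flat in $\QCoh(X)$.
\item Since $F$ is exact and symmetric monoidal, $F(Q)\simeq \mathrm{cofib}(\OO_S\to F(\OO_W))$.
\item $F(Q)$ is flat because $F$ preserves flat objects.
\item Hence $N\to B\otimes N$ is injective for every discrete $N$, so $B\otimes N\neq 0$ whenever $N\neq 0$, and $V\to S$ is faithfully flat.
\end{enumerate}
No right adjoint or projection formula is needed, and with this replacement the rest of your argument goes through as in the paper.
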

\begin{proof}
    If $F$ is in the essential image, then $F$ is right t-exact and carries flat objects to flat objects.
    Conversely: by taking colimits in $Y$, it suffices to assume that $Y$ is affine. 
    Since $X \to Z$ is a relative geometric stack, there exists a faithfully flat affine morphism $W \to X$ where $W \to Z$ is affine. 
    Then $\OO_W \in \DAlg(X)$ is connective and faithfully flat,
    so $R' = F(\OO_W) \in \DAlg(\Spec R)$ is connective and faithfully flat.\footnote{Since $F$ preserves cofiber sequences and flat objects, it preserves faithfully flat morphisms by \cite[Lemma D.4.4.3]{Lurie-SAG}.}
    If $V = \Spec R'$, then $V \to Y$ is a faithfully flat affine morphism. Since $\OO_V \simeq F(\OO_W)$ in $\DAlg(Y)$, Lemma \ref{lemma: relative recognition geometric} shows $F$ is geometric.
\end{proof}

When $Z = \Spec(\mathbb{Z})$, we obtain the derived version of \cite[Theorem 3.4.2]{luriedagviii} or \cite[Theorem 9.3.0.3]{Lurie-SAG}.
\begin{corollary}\label{corollary: essential image for geometric stack}
    Let $X$ be a geometric stack and $Y$ a prestack.
    Then for 
    \[ F \in \Fun_\Theta^{\Mod_{\ZZ}^{\LSym}}(\QCohL(X),\QCohL(Y)),\]
    the following are equivalent:
    \begin{enumerate}
        \item $F$ is geometric;
        \item $F$ is right t-exact and carries flat objects to flat objects.
    \end{enumerate}
\end{corollary}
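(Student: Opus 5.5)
This is the special case $Z = \Spec(\ZZ)$ of Theorem \ref{theorem: essential image for relative geometric stack}, so the plan is to check the hypotheses of that theorem and then translate its conclusion.

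First, $\Spec(\ZZ)$ is an affine scheme. It is therefore an fpqc sheaf by flat descent, and its diagonal is an isomorphism, hence affine and in particular quasi-affine. Second, a geometric stack $X$ gives a relative geometric stack $X \to \Spec(\ZZ)$ in the sense of Definition \ref{defn: relative geometric stack}. Indeed, the diagonal $X \to X \times_{\Spec \ZZ} X = X \times X$ is affine by assumption. Moreover, a faithfully flat map $\Spec A \to X$ from an affine scheme is a faithfully flat morphism over $\Spec(\ZZ)$ whose source maps affinely to $\Spec(\ZZ)$, since any morphism between affine schemes is affine.

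Third, $\QCohL(\Spec \ZZ) = \ModL_\ZZ$ is the initial object $\Mod_{\ZZ}^{\LSym}$ of $\thetaCat_{\Mod_\ZZ/}$, by the construction of $\QCohL$ as a functor to $\thetaCat_{\Mod_\ZZ/}$. Consequently $\Fun_\Theta^{\QCohL(\Spec\ZZ)}(\QCohL(X),\QCohL(Y))$ is literally the category in the corollary. Likewise $\Maps_{\Spec\ZZ}(Y,X) = \Maps(Y,X)$, because $\Spec \ZZ$ is terminal in $\PreStk$. So "geometric" in the sense of Definition \ref{defn: geometric theta-functor} relative to $Z=\Spec\ZZ$ agrees with the absolute notion.

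Applying Theorem \ref{theorem: essential image for relative geometric stack} then shows that $F$ is geometric if and only if $F$ preserves connective objects and flat objects. It remains to note that for a colimit-preserving (in particular exact) functor between stable categories with $t$-structures in cohomological indexing, preserving connective objects $\QCoh^{\leq 0}$ is exactly the definition of right $t$-exactness. There is no real obstacle; the only step requiring any care is the identification of the $\Theta$-functor category under $\Mod_\ZZ^{\LSym}$ with the relative one, which follows from $\Spec\ZZ$ being terminal.
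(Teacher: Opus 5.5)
Your proposal is correct and matches the paper's approach: the corollary is the specialization $Z = \Spec(\ZZ)$ of Theorem \ref{theorem: essential image for relative geometric stack}, with preservation of connective objects being right $t$-exactness. Your verification of the hypotheses, namely the affine diagonal of $\Spec(\ZZ)$, $X \to \Spec(\ZZ)$ being a relative geometric stack, and $\QCohL(\Spec\ZZ) = \Mod_\ZZ^{\LSym}$ with $\Spec \ZZ$ terminal, is accurate and fills in what the paper leaves implicit.
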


\section{Stacks with compactly generated derived categories}\label{section: compactly generated}

\subsection{Quasi-geometric stacks with compactly generated derived categories}
When $X$ is a quasi-geometric stack such that $\QCoh(X)$ is compactly generated, Corollary \ref{corollary: essential image for geometric stack} can be improved.
In the spectral setting, Bhatt and Halpern-Leistner showed that when $\QCoh(X)$ is compactly generated, the condition in Theorem \ref{theorem: essential image for relative geometric stack} of carrying flat objects to flat objects is automatic \cite[§4]{BH17}. The same is true in the derived setting.


   

\begin{lemma}\label{lemma: compact generation for module cats and dalg}
    Let $X$ be a prestack such that $\QCoh(X)$ is compactly generated. 
    \begin{enumerate}
        \item  For all $\AA\in \DAlg(X)$, the categories $\Mod_{\AA}(\QCoh(X))$ and $\DAlg(X)_{\AA/}$ are compactly generated (in particular $\DAlg(X)$ is compactly generated).

        \item For all morphisms $\AA\to \BB$ in $\DAlg(X)$, the functor $- \otimes_{\AA} \BB: \DAlg(X)_{\AA/}\to \DAlg(X)_{\BB/}$ preserves compact objects.
    \end{enumerate}
   
\end{lemma}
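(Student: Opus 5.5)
The plan is to produce explicit compact generators in each case and to check compactness by testing against filtered colimits through a forgetful functor that preserves them.

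\emph{Modules.} Let $\{P_i\}$ be a set of compact generators of $\QCoh(X)$. The free modules $\AA \otimes P_i$ will serve as compact generators of $\Mod_{\AA}(\QCoh(X))$. We have $\Hom_{\AA}(\AA\otimes P_i, M) \simeq \Hom(P_i, M)$, and the forgetful functor $\Mod_{\AA}(\QCoh(X)) \to \QCoh(X)$ preserves all colimits and is conservative. From the adjunction, $\AA\otimes P_i$ is compact and the family detects zero objects.

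\emph{Derived algebras under $\AA$.} Let $M$ be the monad on $\QCoh(X)$ with $\DAlg(X) = \QCoh(X)^M$. By the definition of $\Theta$-category, $M$ preserves sifted colimits, so the forgetful functor $G\colon \DAlg(X) \to \QCoh(X)$ preserves filtered colimits and is conservative. Its left adjoint $\mathrm{Free}$ therefore sends compact objects to compact objects. Thus the objects $\mathrm{Free}(P_i[n])$, for $n \in \ZZ$, are compact. They generate $\DAlg(X)$ under colimits: $\Hom(\mathrm{Free}(P_i[n]), B) \simeq \Hom(P_i[n], G B)$ for all $i$ and $n$, and $G$ is conservative, so a map in $\DAlg(X)$ inducing equivalences on all these mapping spaces is an equivalence. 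Since $\DAlg(X)$ is presentable, this family therefore generates under colimits, and $\DAlg(X)$ is compactly generated.

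For $\DAlg(X)_{\AA/}$, the pushouts $\AA \sqcup \mathrm{Free}(P_i[n])$ serve as generators. Their mapping spaces into $\AA \to B$ are $\Hom(P_i[n], GB)$. The forgetful functor $\DAlg(X)_{\AA/} \to \DAlg(X)$ preserves filtered colimits (these are computed in $\DAlg(X)$) and is conservative, so the same argument applies. Equivalently, one can use that undercategories of compactly generated categories are compactly generated, with generators $\AA \sqcup K$ for compact $K$.

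\emph{Base change.} Part (2) follows formally. The functor $-\otimes_{\AA}\BB = \BB \sqcup_{\AA} -$ is left adjoint to restriction $\DAlg(X)_{\BB/} \to \DAlg(X)_{\AA/}$. Restriction preserves filtered colimits, since in both undercategories these are computed in $\DAlg(X)$, so the left adjoint preserves compact objects.

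\emph{Main obstacle.} The only delicate point is the generation claim. It requires that the free functor $\mathrm{Free}$ is accurately described as left adjoint to a conservative, filtered-colimit-preserving forgetful functor. This is guaranteed here because $\DAlg(X) = \QCoh(X)^M$ is monadic over $\QCoh(X)$ with $M$ preserving sifted colimits, which is part of the definition of a $\Theta$-category. One must also take care that colimits in undercategories are computed correctly; for filtered (connected) diagrams they agree with those in $\DAlg(X)$.
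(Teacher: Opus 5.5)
Your proposal is correct and follows essentially the paper's route. For (1) the paper cites Raksit's Proposition 4.1.10 (algebras over a sifted-colimit-preserving monad on a compactly generated category form a compactly generated category); your free-algebras-on-compact-generators argument is the standard proof of that result, and one cosmetic difference is that the paper treats $\DAlg(X)_{\AA/}$ as monadic over $\Mod_{\AA}(\QCoh(X))$, while you pass through $\DAlg(X)$ and undercategory generators. For (2) the paper argues as you do, that restriction $\DAlg(X)_{\BB/}\to\DAlg(X)_{\AA/}$ preserves sifted, hence filtered, colimits, so its left adjoint preserves compact objects, though it computes those colimits on underlying modules rather than in $\DAlg(X)$.
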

\begin{proof}
 By \cite[Proposition 4.1.10]{Rak20}, if $\mathsf{C}$ is a compactly generated presentable $\infty$-category and $T$ is a sifted-colimit-preserving monad on $\mathsf{C}$, then $\mathsf{C}^T$ is compactly generated. Then (1) follows since $\Mod_\AA(\QCoh(X)) \to \QCoh(X)$ and $\DAlg(X)_{\AA/} \to \Mod_{\AA}(\QCoh(X))$ preserve sifted colimits.

       The right adjoint $\DAlg(X)_{\BB/}\to \DAlg(X)_{\AA/}$ preserves sifted colimits. Indeed sifted colimits in both these categories are computed at the level of modules, and the forgetful functor $\Mod_{\BB}(\QCoh(X))\to \Mod_{\AA}(\QCoh(X))$ preserves sifted colimits. Therefore the left adjoint 
       preserves compact objects, proving (2).
\end{proof}

Following \cite[Lemma 2.3]{BhattAlgebraizationTannaka},
the next lemma shows that if $X$ is a quasi-geometric stack such that $\QCoh(X)$ is compactly generated, $S$ is a quasi-affine scheme, and $F\colon \QCohL(X)\to \QCohL(S)$ is a $\Theta$-functor, the derived geometry of $S$ `localizes' to $\QCohL(X)$ in the sense that $\QCohL(S)$ is a module $\Theta$-category over $\QCohL(X)$.

\begin{lemma}\label{lem: bhatt localize geometry}
Let $X$ be a prestack with quasi-compact schematic diagonal such that $\QCoh(X)$ is compactly generated
and let $S$ be a quasi-affine scheme.
If 
\[F\in \Fun_\Theta(\QCohL(X),\QCohL(S)),\] 
then $F$ admits a colimit-preserving right adjoint $G\colon \QCoh(S)\to \QCoh(X)$ 
so that $F$ induces an equivalence of $\Theta$-categories $$ \Mod_{G(\OO_S)}\QCohL(X)\simeq \QCohL(S)$$ under $\QCohL(X)$. 
Under this equivalence, the functor $F:\QCoh(X) \to \QCoh(S)$ corresponds to base change $M\mapsto M\otimes_{\OO_X} G(\OO_S).$
\end{lemma}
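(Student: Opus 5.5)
The plan is to follow Bhatt's argument \cite[Lemma 2.3]{BhattAlgebraizationTannaka} on underlying symmetric monoidal categories and then upgrade the result to $\Theta$-categories using Proposition \ref{prop: recognition of module category}.

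\emph{Step 1: $F$ has a colimit-preserving right adjoint.} Since $F$ preserves colimits, it has a right adjoint $G$. Because $\QCoh(X)$ is compactly generated, it suffices to show that $F$ sends compact objects to compact objects. The diagonal of $X$ is quasi-compact schematic, so compact objects of $\QCoh(X)$ are dualizable; I would use the standard argument that $\one$ is compact and compacts are generated under finite colimits and retracts by dualizable objects. The symmetric monoidal functor $F$ preserves dualizable objects. On the quasi-affine scheme $S$ the dualizable objects are the perfect ones, and these are compact. Hence $F$ preserves compacts, and $G$ preserves filtered colimits and therefore all colimits.

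\emph{Step 2: the projection formula.} For $x\in \QCoh(X)$ and $y\in\QCoh(S)$, consider the map $x\otimes G(y)\to G(F(x)\otimes y)$. Both sides are colimit-preserving in $x$, so it is enough to check it for $x$ compact, hence dualizable. For dualizable $x$ it is formal: $G(F(x)\otimes y)$ corepresents $\Hom(F(x^\vee)\otimes F(-),y)$, which gives the identification with $x\otimes G(y)$.

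\emph{Step 3: $G$ is monadic.} This is the step I expect to be the main obstacle, because it is where quasi-affineness of $S$ is really used. By Step 1, $G$ preserves colimits, so by Barr--Beck--Lurie it suffices to show that $G$ is conservative. Suppose $G(y)=0$. By the projection formula, $G(F(x)\otimes y)=x\otimes G(y)=0$ for every $x$, hence
\[
\Hom(F(x),y)=\Hom(x,G(y))=0 .
\]
So it remains to see that the essential image of $F$ generates $\QCoh(S)$ under colimits. Since $S$ is quasi-affine, $\QCoh(S)$ is generated by $\OO_S=F(\OO_X)$ together with its shifts. Indeed, for quasi-affine $S$ the functor $\Gamma(S,-)$ is conservative, since $\QCoh(S)\simeq \Mod_{\Gamma(S,\OO_S)}$ by Proposition \ref{prop: QCohL on quasi-affine morphism} applied to $S\to\Spec\ZZ$. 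Therefore $y=0$.

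\emph{Step 4: conclusion.} Steps 1 to 3 show that $F$, a $\Theta$-functor, satisfies the hypotheses of Proposition \ref{prop: recognition of module category}. That proposition gives the equivalence $\Mod_{G(\OO_S)}\QCohL(X)\simeq\QCohL(S)$ under $\QCohL(X)$. The last claim follows because, under this equivalence, $F(M)$ corresponds to $G(F(M))\simeq M\otimes G(\OO_S)$, by the projection formula with $y=\OO_S$.
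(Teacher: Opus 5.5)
Your proposal is correct and follows the paper's route. The paper also verifies the hypotheses of Proposition \ref{prop: recognition of module category}. It gets conservativity of $G$ from $\Gamma(S,K)\simeq\Gamma(X,GK)$ together with conservativity of $\Gamma(S,-)$ on the quasi-affine $S$. For colimit preservation and the projection formula, it uses that compact objects of $\QCoh(X)$ are dualizable \cite[Chapter 3, Proposition 3.6.8]{GR17I} and then runs Bhatt's argument \cite[Lemma 2.3]{BhattAlgebraizationTannaka}, as in your Steps 1--2.

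One correction: the aside in Step 1 that $\one$ is compact is false under these hypotheses. The unit of $\QCoh(\Spf(\ZZ_p))$ is not compact, even though $\Spf(\ZZ_p)$ has affine diagonal and compactly generated $\QCoh$. You do not need it, though, since only the implication compact $\Rightarrow$ dualizable in $\QCoh(X)$ is used, and that follows from the diagonal hypothesis.
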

The proof follows \cite[Lemma 2.3]{BhattAlgebraizationTannaka}, but our hypotheses are more general.
\begin{proof}
By Proposition \ref{prop: recognition of module category}, we need to check that the induced right adjoint $G\colon \QCoh(S)\to \QCoh(X)$ satisfies the following properties:
\begin{enumerate}
    \item The functor $G$ is conservative and preserves geometric realizations;
    \item $(F,G)$ satisfies the projection formula, that is, the natural morphism $M\otimes_{\OO_X} G(N)\to G(F(M)\otimes_{\OO_S}N)$ is an equivalence.
\end{enumerate}
Since $F$ is symmetric monoidal and $G$ is right adjoint to $F$,
\begin{align*}
    \Gamma(S,K) = \Hom_{\QCoh(S)}(\OO_S,K)
    = \Hom_{\QCoh(X)}(\OO_X, G(K))
    = \Gamma(X, GK).
\end{align*}
Since $S$ is a quasi-affine scheme, $\Gamma(S,-)$ is conservative (Proposition \ref{prop: QCohL on quasi-affine morphism}). 
Hence $G$ is conservative.

Since $X$ has quasi-compact schematic diagonal, compact objects in $\QCoh(X)$ are dualizable \cite[Chapter 3, Proposition 3.6.8]{GR17I}. The rest of the proof is exactly as in \cite[Lemma 2.3]{BhattAlgebraizationTannaka}.
\end{proof}

We can now prove the derived version of \cite[Theorem 4.1]{BH17}:
\begin{theorem}\label{thm: compactly gen over affine}
Let $T$ be an affine scheme. Let $X$ be a quasi-geometric stack over $T$ such that $\QCoh(X)$ is compactly generated and let $Y$ be a prestack over $T$. Then the functor 
    \[\Maps_T(Y,X)\to \Fun_\Theta^{\QCoh(T)^{\LSym}}(\QCohL(X),\QCohL(Y))\]
    assigning $f$ to $f^*$ is fully faithful with image spanned by $\Theta$-functors preserving connective objects.
\end{theorem}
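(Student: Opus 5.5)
Full faithfulness is already Theorem \ref{theorem: quasi-affine diagonal and full faithfulness}: $T$ is affine, so it has affine diagonal, and $X$ has quasi-affine diagonal. Pullbacks clearly preserve connective objects. So the content is essential surjectivity onto connectivity-preserving $\Theta$-functors. Both sides send colimits in $Y$ to limits, and connectivity is checked affine-locally. Hence we may assume $Y=S$ is affine. We then aim to verify condition (2) of Lemma \ref{lemma: bhl recognition geometric}, following \cite[Theorem 4.1]{BH17}.

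Fix a faithfully flat map $W=\Spec B\to X$ from an affine scheme; it is quasi-affine because $X$ has quasi-affine diagonal. Then $\OO_W\in\DAlg(X)$ is connective, so $A:=F(\OO_W)\in\DAlg(S)$ is connective. The difficulty is that $A$ need not be flat over $\OO_S$, so $\Spec A\to S$ is not a cover, and we must instead produce a faithfully flat quasi-affine $V\to S$ with a map $\OO_V\to A$.

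Following Bhatt--Halpern-Leistner, consider the right adjoint $G$ of $F$. Because $\QCoh(X)$ is compactly generated, Lemma \ref{lem: bhatt localize geometry} identifies $\QCohL(S)\simeq\Mod_{G(\OO_S)}\QCohL(X)$ under $\QCohL(X)$, with $F=-\otimes_{\OO_X}G(\OO_S)$. Set $\mathcal{R}=G(\OO_S)\in\DAlg(X)$. Since $F$ preserves connectives, $G$ is left t-exact, and $\Gamma(X,\mathcal{R})=\Gamma(S,\OO_S)$ is connective. The key step is to show that $\mathcal{R}$ is, up to a localization, connective; equivalently, that $\tau^{\leq 0}\mathcal{R}\to\mathcal{R}$ is a compact-localization. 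Then Theorem \ref{theorem: qaff is compact localizations} (with Lemma \ref{lemma: global quasi-affine into global sections}) exhibits $\mathcal{R}=\OO_{U}$ for a quasi-affine $g\colon U\to X$, and $\QCohL(S)\simeq\QCohL(U)$ under $\QCohL(X)$. By Proposition \ref{prop: reconstruction for quasi-affine morphisms}, combined with the full faithfulness of $\Spec$ on affines, the identification $S\simeq U$ over $X$ makes $F$ pullback along the resulting map $S\to X$.

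To prove the compact-localization claim, we follow \cite[Lemma 4.3]{BH17}. Use that $\OO_S$ is compact in $\DAlg(S)$ over itself and that $F$ is symmetric monoidal with $F\tau^{\leq0}\mathcal{R}\to F\mathcal{R}$. We want the multiplication $F\mathcal{R}=\OO_S\otimes\mathcal R\to\OO_S$ to split $\OO_S$ off as a localization. By Lemma \ref{lemma: compact generation for module cats and dalg} we can write $\mathcal{R}$ as a filtered colimit of compact objects of $\DAlg(X)_{\tau^{\le0}\mathcal R/}$. Compactness of $\OO_S$ in $\DAlg(S)_{F\tau^{\le 0}\mathcal R/}$ then lets us factor through a compact stage. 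We then use the localization characterization of Proposition \ref{prop: preservation of epimorphisms} to reduce to the $\Einfty$ statement of \cite{BH17}, and boundedness above is inherited from $\Gamma$ of the quasi-affine scheme $S$.

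This step, showing that $G(\OO_S)$ is a bounded-above compact-localization of a connective derived algebra, is the main obstacle. If it resists, the fallback is to verify Lemma \ref{lemma: bhl recognition geometric}(2) directly. Take $V\to S$ to be the quasi-affine open of $\Spec\pi_0$-type cut out by this localization, obtained via Lemma \ref{lemma: Bhl full derived theorem}, and take $\OO_V\to A$ induced by $\mathcal R\to\mathcal R\otimes\OO_W$.
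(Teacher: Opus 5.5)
Your reduction to $Y=S$ affine and your target, condition (2) of Lemma \ref{lemma: bhl recognition geometric}, are the right ones. However, the proof has a genuine gap at exactly the step you flag.

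The assertion that $\mathcal R=G(\OO_S)$ is a bounded-above compact-localization of $\tau^{\leq 0}\mathcal R$ is, a posteriori, the statement that $\mathcal R\simeq f_*\OO_S$ for a quasi-affine $f\colon S\to X$. It therefore carries essentially the full content of the theorem, and your sketch does not prove it:
\begin{itemize}
\item Boundedness above of the sheaf $\mathcal R$ does not follow from connectivity of $\Gamma(X,\mathcal R)=\Gamma(S,\OO_S)$.
\item Nothing makes $\OO_S$ compact in $\DAlg(S)_{F(\tau^{\leq 0}\mathcal R)/}$.
\item The map $F(\mathcal R)\to\OO_S$ is the counit of $F\dashv G$, i.e.\ the multiplication of $\mathcal R$. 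For $F=f^*$ it encodes the diagonal of $S$ over $X$, and it has no reason to be a localization.
\item \cite{BH17} contains no $\Einfty$ statement about $G(\OO_S)$ to reduce to.
\end{itemize}
Even granting the claim, the last step is circular. Proposition \ref{prop: reconstruction for quasi-affine morphisms} only compares objects of $\QAff(X)$, and $S$ is not yet an object over $X$. So $\QCohL(S)\simeq\QCohL(U)$ under $\QCohL(X)$ yields $S\simeq U$ only if you already know that the equivalence $\QCohL(U)\to\QCohL(S)$ is geometric, which is again an instance of the problem. Two smaller errors: $\OO_W$ is not connective in general, since $W\to X$ is merely quasi-affine; and your fallback never addresses the faithful flatness of $V\to S$ that Lemma \ref{lemma: bhl recognition geometric}(2) requires.

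The missing idea is to apply the classification on the $X$-side to $\OO_W$, where it is already known, rather than to $G(\OO_S)$. By Theorem \ref{theorem: qaff is compact localizations}(2), $\OO_W$ is a bounded-above compact-localization of a connective $\mathcal A\in\DAlg(X)$ (namely $\tau^{\leq 0}\OO_W$). Each property transports along $F$:
\begin{itemize}
\item $F(\OO_W)$ is a localization of $F(\mathcal A)$, since $F$ is symmetric monoidal.
\item $F(\mathcal A)$ is connective and $F(\OO_W)$ is bounded above, since $F$ is right t-exact.
\item $F(\OO_W)$ is compact in $\DAlg(S)_{F(\mathcal A)/}$. Under the equivalence $\QCohL(S)\simeq\Mod_{G(\OO_S)}\QCohL(X)$ of Lemma \ref{lem: bhatt localize geometry}, the functor $F\colon\DAlg(X)_{\mathcal A/}\to\DAlg(S)_{F(\mathcal A)/}$ becomes base change $-\otimes G(\OO_S)\colon\DAlg(X)_{\mathcal A/}\to\DAlg(X)_{\mathcal A\otimes G(\OO_S)/}$, which preserves compact objects by Lemma \ref{lemma: compact generation for module cats and dalg}(2).
\end{itemize}
Theorem \ref{theorem: qaff is compact localizations}(2) over $S$ then gives $F(\OO_W)\simeq\OO_V$ for a quasi-affine $V\to S$. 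Faithful flatness of $V\to S$ is a statement about the underlying symmetric monoidal functor and follows as in \cite[Claim 4.2]{BH17}. Lemma \ref{lemma: bhl recognition geometric} then finishes. In short, $G(\OO_S)$ is only a tool for controlling compactness, not the object to be classified.
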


\begin{proof}
By Theorem \ref{theorem: quasi-affine diagonal and full faithfulness}, 
the functor $f \mapsto f^*$ 
is fully faithful. We must show that a morphism $F \in \Fun_\Theta^{\QCohL(T)}(\QCohL(X),\QCohL(Y))$ preserving connective objects is in the essential image.
    Since both sides send colimits in $Y$ to limits, we may assume $Y=S$ is an affine scheme.
    
    Since $X$ is a quasi-geometric stack, there is a faithfully flat quasi-affine morphism $W \to X$ where $W$ is affine.     
    By Lemma \ref{lemma: bhl recognition geometric}, it suffices to show that $F(\OO_W) \in \DAlg(S)$ is the coordinate ring of a faithfully flat quasi-affine morphism $V \to S$. 
    By Theorem \ref{theorem: qaff is compact localizations}(2), $\OO_W$ is a bounded-above compact localization of some connective $\AA\in \DAlg(X)$. Since $F$ is symmetric monoidal, $F(\OO_W)$ is a localization of $F(\AA)$; since $F$ preserves connectivity, $F(\OO_W)$ is bounded-above and $F(\AA)$ is connective.
    Therefore one needs to check the compactness of $F(\OO_W)$ as an $F(\AA)$-algebra.

    It suffices to show that $F: \DAlg(X)_{\AA/}\to \DAlg(S)_{F(\AA)/}$ preserves compact objects. 
    By Lemma \ref{lem: bhatt localize geometry}, there is an equivalence of $\Theta$-categories
    $$\Phi\colon \QCohL(S)\simeq \Mod_{G(\OO_S)}\QCohL(X)$$ 
    under $\QCohL(X)$,
    where $G: \QCoh(S) \to \QCoh(X)$ is a right adjoint to $F$.
    Under the equivalence $\Phi$, the functor $F$ corresponds to the base change $-\otimes G(\OO_S)$, that is, for $M \in \QCoh(X)$,
    $$\Phi(F(M))\simeq M\otimes G(\OO_S).$$ 
    Further, the category $\DAlg(S)_{F(\AA)/}$ is identified with $\DAlg(X)_{\AA \otimes G(\OO_S)/}$,
    and the functor induced by $F$ is identified with the base change $-\otimes G(\OO_S): \DAlg(X)_{\AA/} \to \DAlg(X)_{\AA \otimes G(\OO_S)/}$. By Lemma \ref{lemma: compact generation for module cats and dalg}$(2)$, this functor preserves compact objects.
    Hence $F(\OO_W)$ is a compact $F(\AA)$-algebra.

    Thus, $F(\OO_W)$ is an eventually connective compact localization of a connective object of $\DAlg(S)$.
    By Theorem \ref{theorem: qaff is compact localizations}(2), $F(\OO_W) \simeq \OO_V$ for $V \in \QAff(S)$.
    It remains to check that $V \to S$ is a faithfully flat morphism. This can be detected on $F(\OO_W)$ and so is a fact about the underlying symmetric monoidal categories, whence the last paragraph of the proof of \cite[Theorem 4.1]{BH17} along with their \cite[Claim 4.2]{BH17} works. 
\end{proof}

There is also a relative version of Theorem \ref{thm: compactly gen over affine} over a base prestack with affine diagonal. 
Our strategy is to reduce to the affine case by the following lemma:

\begin{lemma}\label{lemma: t-structure along affine map}
    Let $Z$ be a prestack with affine diagonal, let $X \to Z$ and $Y \to Z$ be morphisms of prestacks, and let 
    \[ F \in \Fun_\Theta^{\QCohL(Z)}(\QCohL(X),\QCohL(Y))\]
    be a $\Theta$-functor preserving connective objects.
    If $T\to Z$ is a map from an affine scheme,
    then $F \times_Z T$ preserves connective objects.    
\end{lemma}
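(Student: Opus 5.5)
Since $Z$ has affine diagonal, the map $T \to Z$ from an affine scheme is affine: for any affine $S \to Z$, the fiber product $T \times_Z S$ is the base change of $\Delta_Z$ along $T \times S \to Z \times Z$, hence affine. In particular $T \to Z$ is quasi-affine, so Definition \ref{def: base change of theta} applies. Write $X_T = X \times_Z T$, $Y_T = Y \times_Z T$, and let $g_X: X_T \to X$, $g_Y: Y_T \to Y$ be the projections; both are affine.

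The plan is to identify $F \times_Z T$ on underlying categories concretely. By Proposition \ref{prop: QCohL on quasi-affine morphism}, $\QCohL(X_T) \simeq \Mod_{\mathcal{B}}(\QCohL(X))$ with $\mathcal{B} = (g_X)_*\OO_{X_T}$. By base change for the affine morphism $T \to Z$ \cite[Chapter 3, Proposition 2.2.2]{GR17I}, $\mathcal{B} \simeq p^*\OO_T$, where $\OO_T$ is regarded in $\DAlg(Z)$ and $p: X \to Z$. Similarly $\QCohL(Y_T) \simeq \Mod_{q^*\OO_T}(\QCohL(Y))$. Since $F$ is a $\Theta$-functor under $\QCohL(Z)$, we have $F(p^*\OO_T) \simeq q^*\OO_T$ as derived algebras.

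Using the pushout description of Proposition \ref{prop: pushout of theta cats}, together with the universal property of Proposition \ref{prop: module cat adjunction}, I expect $F \times_Z T$ to be the induced functor on module categories:
\[ \Mod_{p^*\OO_T}(\QCoh(X)) \to \Mod_{q^*\OO_T}(\QCoh(Y)), \qquad M \mapsto F(M), \]
with the module structure transported along $F(p^*\OO_T) \simeq q^*\OO_T$. Concretely, $F \times_Z T$ is characterized by $(g_Y)_* \circ (F\times_Z T) \simeq F \circ (g_X)_*$ as functors of underlying categories. Verifying this compatibility is the main point. I would try to deduce it by checking that the $\Theta$-functor $\Mod_{\mathcal{B}}(F)$ fits into the pushout square under $\QCohL(X)$ and $\QCohL(T)$, and then invoking uniqueness.

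Granting this, the connectivity claim follows quickly. Since $g_X$ and $g_Y$ are affine, $(g_X)_*$ and $(g_Y)_*$ are t-exact, and $(g_Y)_*$ is conservative. An object $N \in \QCoh(Y_T)$ is connective if and only if $(g_Y)_*N$ is connective: if $(g_Y)_* N$ is connective, then $(g_Y)_*\tau^{>0}N \simeq \tau^{>0}(g_Y)_*N = 0$, so $\tau^{>0}N = 0$ by conservativity. Now take $M \in \QCoh(X_T)^{\leq 0}$. Then $(g_X)_*M$ is connective, so $F((g_X)_*M) \simeq (g_Y)_*(F\times_Z T)(M)$ is connective, and therefore $(F\times_Z T)(M)$ is connective.
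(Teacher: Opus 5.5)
Your proposal is correct and follows the paper's proof. Both push $K$ forward along the affine map $X_T \to X$, which preserves connectivity; both use the identity $(g_Y)_*\circ(F\times_Z T)\simeq F\circ(g_X)_*$; and both conclude because pushforward along the affine map $Y_T\to Y$ reflects connectivity.

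The differences are cosmetic. The paper first reduces to $Y$ affine, so that $Y_T$ is affine and reflection of connectivity is immediate, and it asserts the key identity without comment. You instead argue reflection via t-exactness and conservativity of affine pushforward for general $Y$, and you sketch how the identity follows from the module-category description of the pushout.
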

\begin{proof}
    We may assume $Y$ is an affine scheme.
    Let $q:X_T\to X$ and $p:Y_T\to Y$ be the base changes of $X \to Z$ and $Y \to Z$ along $T\to Z$.
    Let $K \in \QCoh(X_T)^{\leq 0}$.
    Since $q$ is an affine morphism, $q_*$ preserves connective objects, so $q_*K \in \QCoh(X)^{\leq 0}$.
    Since $Z$ has affine diagonal, $Y_T$ is an affine scheme, so $p_*$ reflects connective objects. Then
    \[ 
        p_*(F\times_Z T)(K) \simeq F(q_*K) \in \QCoh(Y)^{\leq 0},
    \]
    so $(F \times_Z T)(K) \in \QCoh(Y_T)^{\leq 0}$, as desired.
\end{proof}

We may now prove the main theorem of this section.

\begin{theorem}\label{theorem: relative compactly generated}
    Let $Z$ be an fpqc sheaf with affine diagonal.
    Let $X \to Z$ be a relative quasi-geometric stack such that $\QCoh(X)$ is compactly generated and let $Y\to Z$ be a morphism of prestacks.
    Then the association $f \mapsto f^*$ gives a fully faithful embedding
    \[ 
        \Maps_Z(Y,X) \to 
        \Fun_\Theta^{\QCohL(Z)}(\QCohL(X),\QCohL(Y))
    \]
    with essential image spanned by $\Theta$-functors preserving connective objects.
\end{theorem}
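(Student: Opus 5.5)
Full faithfulness is immediate from Theorem \ref{theorem: quasi-affine diagonal and full faithfulness}: an affine diagonal is in particular quasi-affine, and $X$ has quasi-affine diagonal. It is quasi-affine relative to $Z$, and the absolute diagonal $X \to X\times X$ factors as $X \to X\times_Z X \to X \times X$, where the second map is a base change of $Z \to Z\times Z$. So we only need to identify the essential image. One direction is clear: pullbacks $f^*$ preserve connective objects, since connectivity in $\QCoh$ is defined by pullback to affines. For the converse, let $F$ preserve connective objects. By Lemma \ref{lemma: check geometric after all base changes}, it suffices to show that $F\times_Z T$ is geometric for every affine $T \to Z$.

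Fix such a $T$ and put $X_T = X\times_Z T$, $Y_T = Y\times_Z T$. Lemma \ref{lemma: t-structure along affine map} shows that $F\times_Z T$ preserves connective objects. Since $X\to Z$ is a relative quasi-geometric stack, $X_T$ is a quasi-geometric stack over the affine scheme $T$. The plan is then to apply Theorem \ref{thm: compactly gen over affine} to $F\times_Z T \in \Fun_\Theta^{\QCohL(T)}(\QCohL(X_T),\QCohL(Y_T))$. This gives that $F\times_Z T$ is geometric, and Lemma \ref{lemma: check geometric after all base changes} then finishes the proof.

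The main obstacle is the remaining hypothesis of Theorem \ref{thm: compactly gen over affine}, namely that $\QCoh(X_T)$ is compactly generated. We only know this for $\QCoh(X)$. Since $Z$ has affine diagonal, $T\to Z$ is affine, so $q: X_T\to X$ is affine. By Proposition \ref{prop: QCohL on quasi-affine morphism}, $\QCoh(X_T)\simeq \Mod_{q_*\OO_{X_T}}(\QCoh(X))$. Lemma \ref{lemma: compact generation for module cats and dalg}(1) then shows that this module category is compactly generated, by the objects $M\otimes q_*\OO_{X_T}$ with $M$ compact. So the hypothesis holds.

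One should also check that the proof of Theorem \ref{thm: compactly gen over affine} really only needs compact generation of $\QCoh(X_T)$ together with the quasi-geometric structure of $X_T$; this is what we have established. Note also where affineness of $Z$'s diagonal is used. It is needed in Lemma \ref{lemma: t-structure along affine map}, through the reduction to affine $Y$, where $Y_T$ must be affine. It is needed again here, to make $q$ affine rather than merely quasi-affine. If $q$ were only quasi-affine, compact generation would still follow from the module description, but the connectivity step would fail.
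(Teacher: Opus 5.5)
Your proposal is correct and follows the paper's proof essentially step by step. Full faithfulness comes from Theorem~\ref{theorem: quasi-affine diagonal and full faithfulness}, the reduction to affine $T\to Z$ from Lemma~\ref{lemma: check geometric after all base changes}, compact generation of $\QCoh(X_T)$ as a module category over $\QCoh(X)$ from Lemma~\ref{lemma: compact generation for module cats and dalg}, connectivity of $F\times_Z T$ from Lemma~\ref{lemma: t-structure along affine map}, and the conclusion from Theorem~\ref{thm: compactly gen over affine}; you additionally spell out why $X$ has quasi-affine absolute diagonal and why $q$ is affine, which the paper leaves implicit.
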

\begin{proof}
    By Theorem \ref{theorem: quasi-affine diagonal and full faithfulness}, $f \mapsto f^*$ is fully faithful.
    Suppose 
    \[F \in \Fun_\Theta^{\QCohL(Z)}(\QCohL(X),\QCohL(Y))\]
    preserves connective objects.
    By Lemma \ref{lemma: check geometric after all base changes}, it suffices to show that $F \times_Z T$ is geometric whenever $T \to Z$ is a map from an affine scheme.
    Let $X_T$ be the base change of $X$ along $T\to Z$.
    Since $\QCohL(X_T)$ is a module category over $\QCohL(X)$, Lemma \ref{lemma: compact generation for module cats and dalg} shows $\QCoh(X_T)$ is compactly generated.
    By Lemma \ref{lemma: t-structure along affine map}, $F \times_Z T$ preserves connective objects.
    Since $X_T$ is a quasi-geometric stack, Theorem \ref{thm: compactly gen over affine} shows $F\times_Z T$ is geometric.
\end{proof}

\subsection{The case of quasi-compact quasi-separated derived algebraic spaces}

In this section, our goal is to improve Theorem \ref{theorem: relative compactly generated} in the case of quasi-compact quasi-separated algebraic spaces.\footnote{We thank Bhargav Bhatt for asking this question.} 

As in \cite[Theorem 1.5]{BhattAlgebraizationTannaka} and \cite[Theorem 9.6.0.1]{Lurie-SAG} for quasi-compact quasi-separated algebraic spaces, right $t$-exactness is automatic, essentially because the category of quasi-coherent sheaves is generated by a single compact object.
Our proof in the case of derived algebraic spaces reduces to the spectral case by passing to the underlying spectral stack.

\begin{definition}\label{def: derived algebraic spaces}
    A \emph{derived Deligne-Mumford stack} $X$ is an \'etale sheaf with a surjective \'etale morphism $S\to X$ where $S$ is a scheme\footnote{We place no condition on the diagonal. See \cite[Remark 4.5.13]{luriethesis}.}. A derived Deligne-Mumford stack is a \emph{derived algebraic space} if for all discrete rings $R$, the space $X(R)$ is discrete.  
    
    A derived algebraic space $X$ is \emph{quasi-compact} if in the \'etale surjection $S\to X$, one can choose $S$ to be an affine scheme. It is \emph{quasi-separated} if the diagonal $X\to X \times X$ is a quasi-compact morphism.
\end{definition}

\begin{lemma}\label{lemma: preservation of algebraic spaces}
    Let $X$ be a quasi-compact quasi-separated algebraic space. Let $uX$ be the underlying spectral stack obtained by left Kan extension along the functor $\aring\to \CAlg_{\ZZ}$ and \'etale sheafification. Then $uX$ is a quasi-compact quasi-separated spectral algebraic space. 
\end{lemma}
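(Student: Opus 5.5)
We will prove this by transporting an étale atlas of $X$ to $uX$ and checking Lurie's definition of a spectral algebraic space, condition by condition. Throughout, $u$ denotes the functor from derived prestacks to spectral prestacks obtained by left Kan extension along $\aring \to \CAlg_\ZZ$ followed by étale sheafification.

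\emph{Step 1: $u$ on affines and étale maps.} For an animated ring $A$, we have $u\Spec A \simeq \Spec(UA)$. The left Kan extension of a corepresentable functor is corepresentable by the image object, and $\Spec(UA)$ is already an étale sheaf, so sheafification does nothing. Étale maps correspond under $U$: an étale map $A \to B$ of animated rings is determined by the étale map $H^0A \to H^0B$ of discrete rings, and the same holds for $\Einfty$-rings (\cite[Theorem 7.5.0.6]{HA}). Hence $U$ induces an equivalence of étale sites $\aring^{\mathrm{et}}_{A/} \simeq \CAlg^{\mathrm{et}}_{UA/}$. The same reasoning shows that $u$ sends open immersions and quasi-compact étale morphisms of derived schemes to the corresponding spectral notions, since both are detected on $H^0$ and the underlying topological space.

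\emph{Step 2: atlas and diagonal.} Choose an étale surjection $\Spec A \to X$. The Čech nerve gives $X \simeq |\Spec A^{\bullet}|$ as étale sheaves. Quasi-separatedness of $X$ makes $R = \Spec A \times_X \Spec A$ a quasi-compact derived algebraic space, étale over $\Spec A$.

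The functor $u$ preserves colimits, since it is a left Kan extension followed by sheafification. It also preserves fiber products of étale sheaves along étale maps: by Step 1, étale-local data over $\Spec A$ are the same on both sides, and sheafification is left exact. Therefore $uX \simeq |u(\Spec A^{\bullet})|$, the groupoid $uR \rightrightarrows \Spec(UA)$ is an étale equivalence relation, and $uR \to \Spec(UA)\times \Spec(UA)$ is a monomorphism.

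This is the step I expect to be the main obstacle: commuting $u$ with the fiber products defining $R$ and with the diagonal. Étale sheafification commutes with finite limits. The issue is the left Kan extension, which need not. I would handle it by first observing that every object involved is étale over an affine. For étale-over-affine objects, Step 1 identifies derived and spectral étale sites, so $u$ restricts to an equivalence between étale sheaves on $\aring^{\mathrm{et}}_{A/}$ and on $\CAlg^{\mathrm{et}}_{UA/}$. This equivalence commutes with limits.

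\emph{Step 3: conclude.} By Step 2, $uX$ is a quotient of $\Spec(UA)$ by an étale equivalence relation. The relation $uR$ is quasi-compact over $\Spec(UA)$, because quasi-compactness is a property of the underlying topological spaces (Step 1). So $uX$ is a spectral Deligne–Mumford stack with a quasi-compact étale atlas from an affine. Its diagonal is a monomorphism, so $uX$ is a spectral algebraic space in the sense of \cite[Definition 1.6.8.1]{Lurie-SAG}, and it is quasi-compact.

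Quasi-separatedness means the diagonal is quasi-compact. It suffices to check that $uR = \Spec(UA)\times_{uX}\Spec(UA)$ is quasi-compact, and this follows from the identification $uR \simeq u(R)$ together with quasi-compactness of $R$. Finally, discreteness of $uX(R_0)$ for discrete rings $R_0$ holds because both $X$ and $uX$ are determined by the same étale groupoid on $H^0$ data. Alternatively, the monomorphism-diagonal criterion already gives it.
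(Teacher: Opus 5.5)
\textbf{There is a genuine gap in the algebraic-space step, plus an unjustified fiber-product identification; both are repairable.} Your functor-of-points route differs from the paper's. The paper passes to the spectrally ringed $\infty$-topos of $X$, cites \cite[Lemma 2.19]{ChoughBrauer} to see that it represents $uX$, and cites \cite[Lemma 3.46]{ChoughFormal} for the transfer of quasi-compactness, quasi-separatedness and the algebraic-space property.

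The problem in your version is the monomorphism claims in Steps 2 and 3. In derived or spectral geometry, neither $R\to\Spec A\times\Spec A$ nor the diagonal of $uX$ is a monomorphism. For $X=\Spec A$ with the identity atlas, $R\to\Spec A\times\Spec A$ is the diagonal of $\Spec A$. Its derived self-intersection is $\Spec(A\otimes_{A\otimes A}A)$, which has $\pi_1\neq 0$ for $A=\ZZ[t]$. (A sheaf with monomorphic diagonal is discrete-valued on all connective rings, which fails for $\Spec\ZZ[t]$.) So ``its diagonal is a monomorphism, so $uX$ is a spectral algebraic space'' is not a valid step, and neither is your ``alternatively''. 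The condition in \cite[Definition 1.6.8.1]{Lurie-SAG} is only that $uX(R_0)$ be discrete for discrete rings $R_0$, and for this you give only a slogan. Relatedly, your small-étale-site workaround only controls fiber products over a base that is étale over an affine. Without an extra effectivity-of-groupoids argument, it does not give $u(\Spec A\times_X\Spec A)\simeq\Spec(UA)\times_{uX}\Spec(UA)$, which is a fiber product over $X$.

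Both points are fixed by one observation, which also removes what you call the main obstacle. The functor $\aring\to\CAlg_{\ZZ}$ preserves small colimits, since sifted colimits and coproducts $A\otimes_{\ZZ}B$ are computed the same way on both sides. So it has a right adjoint $G$, and the left Kan extension is simply $X\mapsto X\circ G$.
\begin{enumerate}
\item Hence $u$ is left exact. This gives $uR\simeq\Spec(UA)\times_{uX}\Spec(UA)$ and the quasi-separatedness check at once.
\item Moreover $G(R_0)\simeq R_0$ for discrete $R_0$, since $\Maps_{\aring}(A,R_0)$ and $\Maps_{\CAlg_{\ZZ}}(UA,R_0)$ both equal $\Hom(\pi_0A,R_0)$. Étale sheafification at a discrete ring only involves its small étale site, which consists of discrete rings. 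So $uX(R_0)\simeq X(R_0)$ is discrete.
\end{enumerate}
You would still need a gluing statement to conclude that the quotient of $\Spec(UA)$ by the étale groupoid $uR$ is a spectral Deligne--Mumford stack. The paper avoids this by working with the spectrally ringed $\infty$-topos directly.
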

\begin{proof}
Let $\ffX$ be the $\infty$-topos underlying $X$ and let $u\ffX$ be the $\infty$-topos obtained by viewing the structure sheaf as an $\Einfty$-algebra.
By \cite[Lemma 2.19]{ChoughBrauer}, $u\ffX$ represents the functor $uX.$ 
By \cite[Lemma 3.46(2,4,7)]{ChoughFormal}, $u\ffX$ is a quasi-compact quasi-separated spectral algebraic space.
\end{proof}

\begin{corollary}\label{cor: algebraic space compactly generated}
    Let $X$ be a quasi-compact quasi-separated algebraic space. Then $\QCoh(X)$ is compactly generated. 
\end{corollary}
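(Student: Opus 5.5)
The plan is to reduce to the spectral statement via the comparison of Lemma \ref{lemma: preservation of algebraic spaces}. The key observation is that $\QCoh(X)$ is defined as a symmetric monoidal (stable) category, and it depends only on the underlying spectral stack. Concretely, I would show that pullback along the comparison gives an equivalence $\QCoh(uX) \simeq \QCoh(X)$ of stable categories.

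\textbf{Step 1: reduce to the affine case.} For an affine $\Spec A$, the modules over $A$ and over its underlying $\Einfty$-ring coincide, so $\QCoh(\Spec A) = \Mod_A = \QCoh(u\Spec A)$. Hence the right Kan extension computing $\QCoh(X)$ over $\Aff_{/X}$ agrees with the analogous limit for $uX$, at least after étale descent.

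\textbf{Step 2: descent.} Choose an étale surjection $S \to X$ with $S$ affine, which exists by quasi-compactness. The Čech nerve $S^\bullet$ consists of quasi-compact derived schemes, since the diagonal is quasi-compact. Because $\QCoh$ satisfies étale descent (Corollary \ref{cor: flat descent for QCohL}), we get $\QCoh(X) \simeq \mathrm{Tot}\,\QCoh(S^\bullet)$. Similarly, $uS^\bullet$ is the Čech nerve of the étale cover $uS \to uX$, since $u$ preserves étale maps and fiber products of étale maps. Therefore $\QCoh(uX) \simeq \mathrm{Tot}\,\QCoh(uS^\bullet)$.

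Along the way, I identify $\QCoh(S^n) \simeq \QCoh(uS^n)$ levelwise. For this I use that each $S^n$ is a qcqs derived scheme, so it is covered by affine Zariski opens on which Step 1 applies, and then I descend once more. Alternatively, I could simply use Lurie's description of $\QCoh$ in terms of the $\infty$-topos: by \cite[Lemma 2.19]{ChoughBrauer}, $X$ and $uX$ share the same underlying $\infty$-topos $\ffX$ with the same structure sheaf viewed as an $\Einfty$-ring. Then $\QCoh(X)$ is the category of quasi-coherent $\OO_\ffX$-modules in both settings. I expect the main obstacle to be this identification, since it requires matching the functor-of-points definition of \cite{GR17I} with the ringed-topos one. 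I would handle it with the étale-descent argument above, which only uses the affine comparison.

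\textbf{Step 3: conclude.} By Lemma \ref{lemma: preservation of algebraic spaces}, $uX$ is a qcqs spectral algebraic space. By \cite[Theorem 9.6.1.1]{Lurie-SAG}, $\QCoh(uX)$ is compactly generated, and in fact generated by a single compact object. Transporting this along the equivalence $\QCoh(uX) \simeq \QCoh(X)$ shows that $\QCoh(X)$ is compactly generated.
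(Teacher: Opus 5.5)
Your proposal is correct and follows the same route as the paper: pass to the qcqs spectral algebraic space $uX$ via Lemma \ref{lemma: preservation of algebraic spaces}, identify $\QCoh(uX)\simeq\QCoh(X)$, and apply Lurie's compact generation result for qcqs spectral algebraic spaces. The paper uses the identification $\QCoh(uX)=\QCoh(X)$ without comment, and your Step 1 already justifies it most cleanly: $\QCoh$ sends colimits of prestacks to limits and is unchanged by \'etale sheafification. So the \v{C}ech-nerve argument of Step 2 is optional---though there the $S^n$ being schemes does not follow from quasi-compactness of the diagonal alone, but from the argument of Lemma \ref{lem: qcqs algsp quasi-geometric}.
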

\begin{proof}
    By Lemma \ref{lemma: preservation of algebraic spaces}, $uX$ is a quasi-compact quasi-separated spectral algebraic space. By \cite[Proposition 9.6.1.1]{Lurie-SAG}, we see that $\QCoh(uX) = \QCoh(X)$ is compactly generated.
\end{proof}

\begin{lemma}\label{lemma: qaff derived vs spectral}
    Let $X$ be a derived Deligne-Mumford stack. Then $X$ is a quasi-affine scheme if and only if $uX$ is a quasi-affine spectral scheme.
\end{lemma}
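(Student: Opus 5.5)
The plan is to compare both notions through the underlying topological space and structure sheaf, using that the passage $X \mapsto uX$ does not change the underlying $\infty$-topos. By the proof of Lemma \ref{lemma: preservation of algebraic spaces} (via \cite[Lemma 2.19]{ChoughBrauer}), $uX$ is represented by the $\infty$-topos $u\ffX$. This topos is $\ffX$ with its structure sheaf of animated rings regarded as a sheaf of $\Einfty$-rings. The homotopy sheaves $\pi_i\OO$ are the same in both settings, and so is $\pi_0\OO$. Quasi-affineness only involves the underlying classical data together with global sections and a comparison map to $\Spec$. So the argument will reduce to checking that each ingredient is insensitive to forgetting the animated structure.

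For the forward direction, suppose $X$ is a quasi-affine derived scheme, so $X$ is quasi-compact and there is a quasi-compact open immersion $j: X \to \Spec A$ with $A$ animated. Applying $u$, I would show three things. First, $u\Spec A = \Spec(uA)$ as spectral schemes: the underlying topos is the Zariski (or étale) site of $\pi_0A$, and the structure sheaf is the same sheaf. Second, $uj$ is an open immersion, since it is the inclusion of an open subtopos with restricted structure sheaf. Third, quasi-compactness of $uX$ is a property of the underlying topos, which is unchanged. Together these show $uX$ is a quasi-affine spectral scheme.

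For the converse, assume $uX$ is a quasi-affine spectral scheme. First, $X$ is a derived scheme: being a scheme is a condition on the locally ringed topos $(\ffX, \pi_0\OO)$ and on $\pi_i\OO$ being quasi-coherent, and these agree for $X$ and $uX$. Quasi-compactness again transfers. For the open immersion, I would use the canonical criterion of Lemma \ref{lemma: global quasi-affine into global sections}, or the characterization of quasi-affine schemes via \cite[Lemma 1.5]{goodmoduli_dag}. A quasi-compact scheme is quasi-affine iff the canonical map $X \to \Spec(\tau^{\leq 0}\Gamma(X,\OO_X))$ is a quasi-compact open immersion. Equivalently, at the classical level, the quasi-compact scheme $(|X|,\pi_0\OO)$ is quasi-affine, its truncated structure sheaf being the same for $X$ and $uX$. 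The map itself is defined using the derived ring $\Gamma(X,\OO_X)$, whose underlying $\Einfty$-ring is $\Gamma(uX,\OO_{uX})$ because $U$ preserves limits. Its connective cover is computed on underlying modules by Lemma \ref{lemma: connective cover and forgetful}. Hence $u$ applied to the derived comparison map gives the spectral comparison map, and the open-immersion property is detected on the underlying topos and sheaf. Concretely, I would check that the map is a homeomorphism onto an open subset, and that it induces an isomorphism of structure sheaves there. Both conditions are insensitive to $u$.

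The main obstacle is making precise that $u$ commutes with $\Spec$ and with the comparison map $X \to \Spec \tau^{\leq 0}\Gamma(X,\OO_X)$. In particular, one must confirm that $u(\Spec A)$, defined by left Kan extension and étale sheafification, agrees with the spectral $\Spec(uA)$. I would handle this through the topos-theoretic representability statement \cite[Lemma 2.19]{ChoughBrauer} rather than functor-of-points manipulations. If needed, I would fall back on the classical reduction: $X$ is quasi-affine iff its classical truncation is, a fact that holds in both the derived and spectral settings. The classical truncations of $X$ and $uX$ coincide.
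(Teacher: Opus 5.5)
Your proposal is correct and is essentially the paper's argument. Like you, the paper uses Lemma \ref{lemma: global quasi-affine into global sections} to reduce to whether $X \to \Spec(\tau^{\leq 0}\Gamma(X,\OO_X))$ is a quasi-compact open immersion, and identifies the image of this map under $u$ with the spectral comparison map via Lemma \ref{lemma: connective cover and forgetful}. It then notes that $u$ preserves and reflects quasi-compact open immersions, citing \cite[Lemma 3.46(2)]{ChoughFormal} where you argue through the unchanged topos and structure sheaf, and concludes with the spectral criterion \cite[Proposition 2.4.1.3]{Lurie-SAG}, which you use implicitly.

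Since this criterion gives both directions at once, your separate forward argument and your preliminary check that $X$ is a derived scheme are harmless but unnecessary.
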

\begin{proof}
    By Lemma \ref{lemma: global quasi-affine into global sections}, $X$ is quasi-affine if and only if $X \to \Spec(\tau^{\leq 0}R\Gamma(X,\OO_X))$ is a quasi-compact open immersion. By Lemma \ref{lemma: connective cover and forgetful}, 
    the underlying $\Einfty$-ring of $\tau^{\leq 0}R\Gamma(X,\OO_X)$ is $\tau^{\leq 0}R\Gamma(uX,\OO_{uX})$.
    By \cite[Lemma 3.46(2)]{ChoughFormal}, a morphism of derived Deligne-Mumford stacks $X \to Y$ is quasi-compact if and only if $uX \to uY$ is quasi-compact; also, $X \to Y$ is an open immersion if and only if $uX \to uY$ is an open immersion. 
    Hence, $X \to \Spec(\tau^{\leq 0}R\Gamma(X,\OO_X))$ is a quasi-compact open immersion if and only if 
    $uX \to \Spec(\tau^{\leq 0}R\Gamma(uX,\OO_{uX}))$ is a (spectral) quasi-compact open immersion. By \cite[Proposition 2.4.1.3]{Lurie-SAG}, this occurs if and only if $uX$ is a quasi-affine spectral scheme.
\end{proof}

\begin{lemma}\label{lem: qcqs algsp quasi-geometric}
    Let $X$ be a quasi-compact quasi-separated algebraic space. Then $X$ is a quasi-geometric 
    stack.
\end{lemma}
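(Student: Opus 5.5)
To show that $X$ is a quasi-geometric stack, I will check three things in turn: $X$ is an fpqc sheaf, $X$ has quasi-affine diagonal, and $X$ admits a faithfully flat morphism from an affine scheme. The last is immediate. By Definition \ref{def: derived algebraic spaces}, quasi-compactness gives an \'etale surjection $S \to X$ with $S = \Spec A$ affine. \'Etale surjective morphisms are flat and surjective, hence faithfully flat. This flatness can be checked after base change to affines, where it reduces to the classical statement on $\pi_0$ together with the derived flatness condition $\pi_i(B)\simeq \pi_i(A)\otimes_{\pi_0 A}\pi_0 B$.

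For the diagonal, take an affine scheme $T \to X \times X$ and consider the fiber product $P = X \times_{X\times X} T$. This is a derived algebraic space, since diagonals of derived Deligne--Mumford stacks are relatively Deligne--Mumford, and discreteness on discrete rings is inherited. Because $X$ is quasi-separated, $P$ is quasi-compact. I will show that $P$ is a quasi-affine scheme using Lemma \ref{lemma: qaff derived vs spectral}: it suffices to show that $uP$ is a quasi-affine spectral scheme. The functor $u$ is a left Kan extension followed by sheafification, and it commutes with the relevant fiber products of Deligne--Mumford stacks (by \cite[Lemma 2.19]{ChoughBrauer}, or by comparing the underlying $\infty$-topoi). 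Hence $uP \simeq uX \times_{uX \times uX} uT$. By Lemma \ref{lemma: preservation of algebraic spaces}, $uX$ is a quasi-compact quasi-separated spectral algebraic space, and Lurie shows that such spaces have quasi-affine diagonal \cite[\S 9]{Lurie-SAG}; this gives the claim about $uP$. I expect this to be the main obstacle. The spectral input is that the diagonal of a qcqs algebraic space is a quasi-compact monomorphism that is locally quasi-finite and separated, hence quasi-affine by Zariski's main theorem; this fact is citable from SAG. The point that needs care is that $u$ preserves the fiber product. If the direct comparison is awkward, I would instead argue on underlying classical truncations: quasi-affineness of a derived Deligne--Mumford stack can be checked on $\pi_0$-truncations, and that reduces to the classical fact that qcqs algebraic spaces have quasi-affine diagonal.

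Finally, for fpqc descent, I would again compare with the spectral setting. Lurie proves that quasi-compact quasi-separated spectral algebraic spaces are fpqc sheaves \cite[\S 9]{Lurie-SAG}. Alternatively, once the diagonal is known to be quasi-affine, one can write $X$ as the geometric realization of the \v{C}ech nerve of $S \to X$. Each term of that nerve is a quasi-affine scheme over $S$, hence an fpqc sheaf. Then $X$ is the quotient of an fpqc equivalence relation with \'etale, and in particular fppf-effective, structure maps, and fpqc sheafiness follows by the standard descent argument for quasi-affine morphisms, in the spirit of \cite[Proposition 2.8.3.7]{Lurie-SAG}. Combining the three properties shows that $X$ is quasi-geometric.
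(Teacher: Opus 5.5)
Your treatment of the atlas and of the diagonal follows the paper. You pass to the underlying spectral algebraic space $uX$ via Lemma~\ref{lemma: preservation of algebraic spaces}, invoke Lurie's quasi-affineness result for quasi-separated spectral algebraic spaces, and come back through Lemma~\ref{lemma: qaff derived vs spectral}.

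The one difference is what you test. The paper only tests $S\times_X S$ for an \'etale atlas $S\to X$. That needs $u$ to commute only with fiber products over $uX$, at the cost of a descent step reducing the whole diagonal to the atlas. You test an arbitrary affine $T\to X\times X$. This avoids that step but needs care: $u(X\times X)\simeq uX\times_{\Spec\ZZ}uX$, not the absolute product over the sphere spectrum. So you want the diagonal of $uX$ relative to $\Spec\ZZ$. That follows from the absolute one, because $uX\times_{\Spec\ZZ}uX\to uX\times uX$ is a closed immersion. Separately, your heuristic that the diagonal is a monomorphism holds only on classical truncations, so it is better simply to cite the spectral statement.

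The genuine problem is your first route to fpqc sheafiness. That $uX$ is an fpqc sheaf is a statement about $uX$ as a functor on connective $\Einfty$-rings. It does not transfer to $X$, because $X(B)$ is not $uX(B)$ for animated $B$. Already for $X=\mathbb{A}^1$ one has $X(B)\simeq\Omega^\infty B$, whereas $uX(B)$ is the space of $\Einfty$-maps out of $\ZZ[t]$. These differ for non-discrete $B$, since $\ZZ[t]$ is not free as an $\Einfty$-ring.

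Your alternative is the correct argument: the \v{C}ech nerve of the \'etale atlas has quasi-affine terms, and one concludes by fpqc descent for quasi-affine morphisms together with the \'etale sheaf property of $X$. This is precisely what the paper means by saying that the proof of \cite[Proposition 9.1.4.3]{Lurie-SAG} works verbatim in the derived setting. With that route replacing the spectral comparison, your proof agrees with the paper's.
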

\begin{proof}
    We need to show two things: first, that the diagonal $X\to X \times X$ is quasi-affine; second, that $X$ is an fpqc sheaf. 
    
    
    To show that $X$ has quasi-affine diagonal, it suffices to show that if $S \to X$ is an étale surjection from an affine scheme, then $Y= S \times_X S$ is a quasi-affine scheme. By Lemma \ref{lemma: preservation of algebraic spaces}, $uX$ is a qcqs spectral algebraic space. By \cite[Lemma 3.46(1)]{ChoughFormal}, $uS \to uX$ is étale, so by \cite[Proposition 3.4.1.3]{Lurie-SAG}, $uY$ is a spectral quasi-affine scheme. By Lemma \ref{lemma: qaff derived vs spectral}, $Y$ is a quasi-affine scheme.

    To show $X$ is an fpqc sheaf, the proof of \cite[Proposition 9.1.4.3]{Lurie-SAG} works verbatim in the derived setting.
\end{proof}

\begin{theorem}\label{thm: algebraic space essential image}
    Let $X$ be a quasi-compact and quasi-separated algebraic space and let $Y$ be any prestack. Then the association $f\mapsto f^*$ gives an equivalence
    $$\Maps(Y,X)\to  \Fun_{\Theta}^{\ModL_{\ZZ}}(\QCohL(X),\QCohL(Y)).$$
\end{theorem}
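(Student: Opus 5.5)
The plan is to deduce the theorem from Theorem \ref{theorem: relative compactly generated} with $Z=\Spec(\ZZ)$, and then to show that the connectivity hypothesis there is automatic for qcqs algebraic spaces.

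First I check the hypotheses. By Lemma \ref{lem: qcqs algsp quasi-geometric}, $X$ is a quasi-geometric stack, i.e.\ a relative quasi-geometric stack over $\Spec(\ZZ)$, which is an fpqc sheaf with affine diagonal. By Corollary \ref{cor: algebraic space compactly generated}, $\QCoh(X)$ is compactly generated. Hence Theorem \ref{theorem: relative compactly generated} shows that $f\mapsto f^*$ is fully faithful, with essential image the $\Theta$-functors preserving connective objects. Since $\ModL_\ZZ=\QCohL(\Spec\ZZ)$ is initial among $\Theta$-categories under $\Mod_\ZZ$, the relative functor category is just $\Fun_\Theta^{\ModL_\ZZ}$.

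It remains to show that every $\Theta$-functor $F\colon \QCohL(X)\to\QCohL(Y)$ is right $t$-exact. Both conditions are local on $Y$: connectivity in $\QCoh(Y)$ is tested after pullback to affines, and composing $F$ with pullback to an affine $S\to Y$ gives another $\Theta$-functor. So I may assume $Y=\Spec R$ is affine. Right $t$-exactness depends only on the underlying colimit-preserving symmetric monoidal functor $\QCoh(X)\to\Mod_R$. Since $\QCoh(X)=\QCoh(uX)$ with the same $t$-structure, and $\Mod_R$ is the $\Einfty$ module category of the underlying $\Einfty$-ring of $R$ (which is connective), this is the spectral statement. By Lemma \ref{lemma: preservation of algebraic spaces}, $uX$ is a qcqs spectral algebraic space, and \cite[Theorem 9.6.0.1]{Lurie-SAG} (or the argument of \cite[Theorem 1.5]{BhattAlgebraizationTannaka}) shows that any colimit-preserving symmetric monoidal functor $\QCoh(uX)\to\Mod_{uR}$ is right $t$-exact.

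If that result is not available in exactly this form, I would reprove it directly. $\QCoh(X)^{\leq 0}$ is generated under colimits and extensions by the objects $j_!$-type extensions from affine étale charts. More concretely, by the scallop decomposition there is a connective compact generator $P$ with $\QCoh(X)^{\leq 0}$ generated under colimits by $P$. Then connectivity of $F(P)$ would follow from the facts that $P$ is dualizable, that $F$ preserves duals, and that $F(\OO_X)=R$. This step, identifying a compact connective generator whose image is forced to be connective, is the main obstacle. I would lean on the spectral citation via $uX$ rather than redo it.

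Finally, for a general prestack $Y$, both sides send colimits in $Y$ to limits. Right $t$-exactness holds for each affine $S\to Y$, so every $F$ preserves connective objects, and the fully faithful functor is an equivalence.
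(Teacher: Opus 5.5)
Your proposal is correct and follows the paper's proof. Lemma \ref{lem: qcqs algsp quasi-geometric} and Corollary \ref{cor: algebraic space compactly generated} put you in the setting of Theorem \ref{theorem: relative compactly generated} over $\Spec\ZZ$, and, after reducing to affine $Y$, preservation of connective objects follows from \cite[Theorem 9.6.0.1]{Lurie-SAG} applied to the underlying spectral algebraic space $uX$. Your fallback sketch would not work as written: already for $\mathbb{P}^1$ over a field, no single compact object generates the standard connective part of $\QCoh$, and dualizability together with $F(\OO_X)\simeq R$ does not control the connectivity of $F(P)$. It is not needed, though, since you rely on the citation exactly as the paper does.
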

\begin{proof}
    By Corollary \ref{cor: algebraic space compactly generated} and Lemma \ref{lem: qcqs algsp quasi-geometric}, $X$ is a quasi-geometric stack and $\QCoh(X)$ is compactly generated. By Theorem \ref{theorem: relative compactly generated}, the association $f\mapsto f^*$ is fully faithful with essential image spanned by $\Theta$-functors which preserve connective objects. To conclude, it suffices to show that a left adjoint symmetric monoidal functor $\QCoh(X) \to \QCoh(Y)$ automatically preserves connective objects. We may assume that $Y$ is an affine scheme. After passing to underlying spectral algebraic spaces, this follows from \cite[Theorem 9.6.0.1]{Lurie-SAG}.
\end{proof}

\begin{corollary}\label{cor: ff embedding algsp}
    Let $\AlgSp_{qcqs}\subset \PreStk$ denote the full subcategory spanned by quasi-compact and quasi-separated algebraic spaces. Then the construction $X\mapsto \QCohL(X)$ determines a fully faithful functor $$\AlgSp_{qcqs}\to \thetaCat_{\ModL_{\ZZ}/}.$$ 
\end{corollary}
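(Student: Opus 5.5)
The plan is to deduce the corollary from Theorem \ref{thm: algebraic space essential image}, specialized to $Y$ a qcqs algebraic space. Fix $X, Y \in \AlgSp_{qcqs}$. Since $\AlgSp_{qcqs}$ is a full subcategory of $\PreStk$, the mapping space between them is $\Maps(Y,X)$. On the other side, the mapping space in $\thetaCat_{\ModL_{\ZZ}/}$ from $\QCohL(X)$ to $\QCohL(Y)$ is the underlying groupoid of $\Fun_\Theta^{\ModL_{\ZZ}}(\QCohL(X),\QCohL(Y))$. The functor $X \mapsto \QCohL(X)$ is contravariant, since it is the right Kan extension $\QCohL\colon \PreStk \to \thetaCat_{\Mod_\ZZ/}$ restricted to algebraic spaces. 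I therefore read the statement as asserting a fully faithful functor $\AlgSp_{qcqs}^{op} \to \thetaCat_{\ModL_{\ZZ}/}$; the missing $op$ I take to be an abuse of notation.

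The key steps, in order, are as follows.

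First, I check that the map on mapping spaces induced by the functor is the association $f \mapsto f^*$. This holds by construction of $\QCohL$ on morphisms: a map of prestacks $f\colon Y \to X$ goes to the restriction of limits along the induced map of indexing diagrams of affines. That restriction is precisely pullback, compatibly with the structure maps from $\ModL_{\ZZ} = \QCohL(\Spec \ZZ)$.

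Second, Theorem \ref{thm: algebraic space essential image} says this map
\[ \Maps(Y,X) \to \Fun_\Theta^{\ModL_{\ZZ}}(\QCohL(X),\QCohL(Y)) \]
is an equivalence for any prestack $Y$, in particular for $Y$ a qcqs algebraic space. Two consequences follow. The target category is a groupoid, as the source is a space. Hence its underlying groupoid is the whole category, and so the induced map on mapping spaces of $\thetaCat_{\ModL_{\ZZ}/}$ is an equivalence. That is exactly full faithfulness.

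The only point needing care is identifying the mapping space in the slice $2$-category $\thetaCat_{\ModL_{\ZZ}/}$ with $\Fun_\Theta^{\ModL_{\ZZ}}$. I would also confirm that passing to the underlying $(\infty,1)$-category is harmless. This holds by the very definition of $\Fun_\Theta^{\CC}$ given earlier as morphism categories in $\thetaCat_{\CC/}$. Since these morphism categories are groupoids here, no $2$-categorical subtlety remains.
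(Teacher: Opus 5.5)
Your proposal is correct and matches the paper, which states this corollary without separate proof as an immediate consequence of Theorem~\ref{thm: algebraic space essential image} with $Y$ taken to be a qcqs algebraic space. Your bookkeeping supplies the few lines the paper leaves implicit: the functor is really contravariant, and the equivalence with the space $\Maps(Y,X)$ forces the $\Theta$-functor category to be a groupoid, so it agrees with the mapping space in the slice.
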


\section{Locally Noetherian stacks}\label{section: noetherian}

\subsection{Generalities on locally Noetherian quasi-geometric stacks}

Our goal in this section is to collect some facts and notation about locally Noetherian quasi-geometric stacks from \cite[§9.5.2]{Lurie-SAG}. Since the arguments adapt rather cleanly, we will be brief; in particular we claim no originality in this subsection.

Recall that an animated ring $A$ is \emph{Noetherian} if $H^0(A)$ is Noetherian and each $H^i(A)$ is a finitely generated $H^0(A)$-module.

\begin{definition}\label{def: def of locally Noetherian quasi-geometric stack}
    Say that a quasi-geometric stack $X$ is \emph{locally Noetherian} if there exists a faithfully flat map $f\colon \Spec(A)\to X$ where $A$ is a Noetherian animated ring.
\end{definition}


\begin{notation}
    Let $X$ be a locally Noetherian quasi-geometric stack. Then $\Coh(X)^\heartsuit$ is the full subcategory of $\QCoh(X)^\heartsuit$ spanned by almost-perfect complexes. 
\end{notation}

\begin{proposition}[\cite{Lurie-SAG}, Proposition 9.5.2.3]\label{prop: qcoh noetherian for noetherian stack}
    Let $X$ be a locally Noetherian quasi-geometric stack. Then the category $\QCoh(X)^{\leq 0}$ is locally Noetherian in the sense of \cite[Definition C.6.9.1]{Lurie-SAG}. Moreover, an object $F\in \QCoh(X)^\heartsuit$ is Noetherian if and only if $F \in \Coh(X)^\heartsuit.$
\end{proposition}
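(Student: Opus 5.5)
The statement depends only on the underlying $t$-structure of $\QCoh(X)$ and its interaction with flat covers, so the $\Theta$-structure plays no role. My plan is to reduce to the affine Noetherian case by flat descent, following \cite[Proposition 9.5.2.3]{Lurie-SAG} essentially verbatim.

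First, fix a faithfully flat $f\colon \Spec(A)\to X$ with $A$ Noetherian, as in Definition~\ref{def: def of locally Noetherian quasi-geometric stack}. Since $f$ is flat, $f^*$ is $t$-exact, and it is conservative on $\QCoh(X)$ by Corollary~\ref{cor: flat descent for QCohL}. Hence $f^*$ restricts to an exact, faithful, conservative functor $\QCoh(X)^\heartsuit \to \Mod_{H^0(A)}^\heartsuit$ which commutes with filtered colimits. Because $X$ has quasi-affine diagonal, $f$ is quasi-affine, so $f_*$ has finite cohomological amplitude and commutes with filtered colimits. Consequently $\QCoh(X)^{\leq 0}$ is compactly generated and the heart is Grothendieck abelian.

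Second, I would identify the Noetherian objects of the heart. Suppose $F\in\QCoh(X)^\heartsuit$ is almost perfect. Then $f^*F$ is a finitely generated $H^0(A)$-module, hence Noetherian. An ascending chain of subobjects of $F$ maps under $f^*$ to an ascending chain of submodules, and that chain stabilizes. Since $f^*$ is exact and conservative on the heart, the original chain stabilizes, so $F$ is Noetherian.

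For the converse, suppose $F$ is Noetherian. Write $F$ as the filtered union of its subobjects that are images of coherent objects; I expect this to be the main obstacle. The key input is that every object of $\QCoh(X)^\heartsuit$ is a filtered colimit of objects in $\Coh(X)^\heartsuit$. Following Lurie, I would obtain this from $M\mapsto H^0(f_*M)$ together with the fact that almost perfect objects are detected flat-locally. One writes $f^*F$ as a union of finitely generated submodules. Pulling back along the two projections of $\Spec A\times_X\Spec A$, which is quasi-affine over the Noetherian $\Spec A$ and hence has a Noetherian ring of functions locally, one enlarges each submodule to one that is stable under the descent datum. This yields coherent subobjects $F_\alpha\subseteq F$ exhausting $F$. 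Noetherianity then forces $F=F_\alpha$ for some $\alpha$, so $F$ is almost perfect, since almost perfectness is checked after faithfully flat pullback.

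Third, the objects of $\Coh(X)^\heartsuit$ are then Noetherian and generate the heart under filtered colimits. Combined with the compact generation of $\QCoh(X)^{\leq 0}$ by their (almost) compact counterparts and the finite amplitude of $f_*$, this verifies the conditions of \cite[Definition C.6.9.1]{Lurie-SAG}, namely the $t$-structure compatibility and the ascending chain conditions. This proves that $\QCoh(X)^{\leq 0}$ is locally Noetherian.
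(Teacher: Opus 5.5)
Your overall route is the one the paper intends: pull back along a faithfully flat $f\colon\Spec A\to X$ with $A$ Noetherian, following Lurie. Your argument that almost perfect objects of the heart are Noetherian is fine. The gap is in the step you flag as the main obstacle, namely generation of $\QCoh(X)^\heartsuit$ by $\Coh(X)^\heartsuit$.

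First, $\Spec A\times_X\Spec A$ need not be Noetherian. It is flat and quasi-affine over $\Spec A$, but not of finite type. For example, take $X=BG$ with $G=\Spec k[x_1,x_2,\dots]$ a product of countably many copies of $\mathbb{G}_a$ over a field $k$, and the cover $\Spec k\to X$; then $\Spec k\times_X\Spec k=G$.

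Second, and more seriously, nothing in your sketch produces a \emph{finitely generated} descent-stable enlargement of a finitely generated $N\subseteq f^*F$. Infinite intersections of submodules do not commute with flat base change, so even a ``descent-stable hull'' is problematic. The existence of such enlargements is a consequence of the correct argument, not an input to it.

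The argument that works, and that your phrase ``$M\mapsto H^0(f_*M)$'' points toward, shrinks rather than enlarges. For finitely generated $N\subseteq f^*F$, set $F_N:=\ker\bigl(F\to H^0f_*(f^*F/N)\bigr)$ using the unit. By the triangle identity, the composite $f^*F_N\to f^*F\to f^*F/N$ vanishes. Hence $f^*F_N\subseteq N$ is finitely generated over $H^0(A)$, and so $F_N\in\Coh(X)^\heartsuit$. Since $f$ is quasi-affine, $H^0f_*$ commutes with filtered colimits, and $\ccolim_N f^*F/N=0$; therefore $F=\ccolim_N F_N$. This gives local Noetherianity of the heart, and when $F$ is Noetherian it gives $F=F_N$ for some $N$.

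Separately, your claim that $\QCoh(X)^{\leq 0}$ is compactly generated does not follow from finite amplitude of $f_*$, and it is false in general. Finite amplitude only shows that $\QCoh(X)^{\leq 0}$ is Grothendieck prestable. A compact object of $\QCoh(X)^{\leq 0}$ is compact in $\QCoh(X)$, because $\tau^{\leq 0}$ commutes with filtered colimits. So compact generation of $\QCoh(X)^{\leq 0}$ would force compact generation of $\QCoh(X)$. That fails for the locally Noetherian geometric stack $B\mathbb{G}_a$ over $\mathbb{F}_p$ (Hall--Neeman--Rydh). Note also that the paper treats compact generation as a separate hypothesis in \S\ref{section: compactly generated}.

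So your final paragraph rests on a false input. It also never checks the substantive condition beyond the heart: that Noetherian (equivalently, almost perfect) objects of the heart are almost compact in $\QCoh(X)^{\leq 0}$. This needs its own descent argument. Compute $\Hom(F,-)$ as a totalization over the \v{C}ech nerve of $f$. Each term is quasi-affine, hence has a finite affine cover with affine intersections, so each term commutes with filtered colimits of uniformly truncated objects. For $n$-truncated targets the totalization reduces to a finite partial totalization, which commutes with filtered colimits.
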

\begin{proof}
The proof of \loccit{} adapts straightforwardly.
\end{proof}

\begin{proposition}[\cite{Lurie-SAG}, Proposition 9.5.4.6]\label{prop: lurie tor amplitude lemma}
     Let $X$ be a locally Noetherian quasi-geometric stack and let $Y$ be a prestack. Let $F\colon \QCoh(X)\to \QCoh(Y)$ be a symmetric monoidal functor which preserves small colimits and connective objects. If $K\in \QCoh(X)$ has tor-amplitude $\geq 0$ then $F(K)$ has tor-amplitude $\geq 0$.
\end{proposition}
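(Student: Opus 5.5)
We prove the statement by reducing to the case where $Y$ is affine and then to a statement about $\Tor$ against discrete coherent sheaves. Tor-amplitude $\geq 0$ (cohomological indexing) means that $F(K)\otimes N$ is connective for every discrete $N$.

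\emph{Reduction to affine $Y$.} Tor-amplitude of an object of $\QCoh(Y)$ is detected after pullback along all maps $\Spec R\to Y$. Composing $F$ with such a pullback gives again a colimit-preserving symmetric monoidal functor preserving connective objects. So we may assume $Y=\Spec R$, and we must show that $F(K)\otimes_R N\in\Mod_R^{\leq 0}$ for every discrete $R$-module $N$.

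\emph{Reduction to truncated $K$.} Tor-amplitude $\geq 0$ for $K\in\QCoh(X)$ is equivalent to $K$ being connective with $K\otimes \mathcal{G}$ connective for all $\mathcal{G}\in\QCoh(X)^\heartsuit$. Since $X$ is locally Noetherian, Proposition \ref{prop: qcoh noetherian for noetherian stack} shows that $\QCoh(X)^\heartsuit$ is generated under filtered colimits by $\Coh(X)^\heartsuit$. Following Lurie's proof, I would write $K$ as a filtered colimit of objects built from $\Coh(X)^\heartsuit$ and flat-like pieces. More efficiently, I would use the characterization that $K$ has tor-amplitude $\geq 0$ if and only if $K$ is a filtered colimit of objects $K_\alpha$ that are themselves "flat in a bounded range" (Lazard-type). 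I expect this step to be the main obstacle, since $\QCoh(X)$ is not generated by vector bundles. So the plan is to mimic Lurie's argument directly, as follows.

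\emph{Core argument (Lurie's).} The aim is to show that $\tau^{\geq 0}$ of $F(K)\otimes N$ vanishes in positive degrees. Take the right adjoint $G$ of $F$; it is lax symmetric monoidal and left t-exact, because $F$ is right t-exact. For $N\in\Mod_R^\heartsuit$, a map $F(K)\otimes N\to M$ with $M\in\Mod_R^{\geq 1}$ corresponds by adjunction and projection-type maps to $K\to \underline{\Hom}(\cdots)$. Rather than relying on a projection formula, I would argue via the $t$-structure. The statement reduces to showing that $H^i(F(K)\otimes N)=0$ for $i>0$, and it suffices to treat $N=R/I$-type quotients, i.e.\ $N=H^0(F(\mathcal{G}))$ "pulled back" from $X$. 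Concretely, Lurie proves that the full subcategory of $\QCoh(X)^\heartsuit$ of objects $\mathcal{G}$ for which $F(K\otimes\mathcal{G})$, and hence $F(K)\otimes F(\mathcal{G})$, is connective is closed under extensions, filtered colimits, and quotients. Since $\QCoh(X)^{\leq 0}$ is locally Noetherian, this subcategory contains everything, in particular $\mathcal{O}_X$-related objects whose images generate. Then one uses $F(\mathcal{O}_X)=R$ together with the fact that every discrete $R$-module is a quotient of a sum of copies of $R=F(\mathcal{O}_X)$. From there, a dévissage on $H^0$ (lowering the amplitude via the fiber sequence $\tau^{\leq -1}\to\,\cdot\,\to H^0$) extends the connectivity of $F(K)\otimes R$ to $F(K)\otimes N$.

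Since all of this concerns only the underlying symmetric monoidal functor and the $t$-structure, it is identical to \cite[Proposition 9.5.4.6]{Lurie-SAG}. The only input specific to the derived setting is Proposition \ref{prop: qcoh noetherian for noetherian stack}, and the proof of \loccit{} can be invoked verbatim once that proposition is available.
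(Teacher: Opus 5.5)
\textbf{There is a genuine gap, and it starts with the definition.} In the paper's cohomological indexing, tor-amplitude $\geq 0$ means $K\otimes N\in\Mod^{\geq 0}$ for every discrete $N$, i.e.\ $H^i(K\otimes N)=0$ for $i<0$. This is why the paper can say that $K$ is flat if and only if it is connective and of tor-amplitude $\geq 0$.

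You reversed this. You set out to prove $F(K)\otimes_R N\in\Mod_R^{\leq 0}$, which is connectivity, and you characterize tor-amplitude $\geq 0$ on $X$ as ``$K$ connective with $K\otimes\mathcal{G}$ connective.'' On that reading the proposition is essentially empty: $F(K)$ is connective because $F$ preserves connective objects, so $F(K)\otimes_R N$ is connective for discrete $N$, and no Noetherian hypothesis is needed. Your core argument is therefore aimed at the wrong inequality, both in the subcategory of $\mathcal{G}$ with $F(K\otimes\mathcal{G})$ connective and in the target $H^i(F(K)\otimes N)=0$ for $i>0$. The real difficulty is the opposite one. $F$ is only right $t$-exact, so nothing a priori bounds the negative cohomology of $F(K)\otimes_R N$.

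\textbf{Even with the direction corrected, the last step fails.} The amplitude of $F(K)=F(K)\otimes_R R$ says nothing about $F(K)\otimes_R N$ for a general discrete $R$-module $N$. From $N'\to R^{\oplus I}\to N$, the long exact sequence only relates the negative cohomology of $F(K)\otimes N$ to that of $F(K)\otimes N'$, plus a kernel in degree $0$. The dévissage therefore moves the problem to $N'$ and never terminates; when $R$ is not discrete, $N'$ is not even discrete.

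The whole content of the proposition is the passage from test objects $\mathcal{G}\in\QCoh(X)^\heartsuit$ on $X$ to arbitrary discrete $R$-modules, which are not of the form $F(\mathcal{G})$. Your sketch does not supply this. Your closure argument (extensions, filtered colimits, quotients) is also never anchored in a generating class, and on a stack $\OO_X$ alone does not generate $\QCoh(X)^\heartsuit$.

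\textbf{Your fallback matches the paper's level of detail, but it is incomplete.} Invoking \cite[Proposition 9.5.4.6]{Lurie-SAG} is what the paper does. The paper, however, names two inputs:
\begin{enumerate}
\item the derived analogue of \cite[Proposition 9.5.2.3]{Lurie-SAG}, which is Proposition \ref{prop: qcoh noetherian for noetherian stack};
\item a reduction to $0$-truncations via \cite[Lemma 9.5.4.5]{Lurie-SAG}, which carries over unchanged because it concerns only classical stacks.
\end{enumerate}
You omit the second, and you assert that the first is the only derived-specific input. You should discard the sketch and state the citation with both ingredients.
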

\begin{proof}
The proof depends on two statements: first, Proposition \ref{prop: qcoh noetherian for noetherian stack}; second, by passing to $0$-truncations of derived stacks \cite[Chapter 1, 1.8.5]{GR17I} via \cite[Lemma 9.5.4.5]{Lurie-SAG} (which in turn is insensitive to any derived structure as it is a fact about classical stacks).
\end{proof}

\subsection{Reconstruction for locally Noetherian (quasi-)geometric stacks}

The goal of this section is to improve Corollary \ref{corollary: essential image for geometric stack} in the case of locally Noetherian geometric and quasi-geometric stacks.  Our results are analogues of those in \cite[§5]{BH17}.

As in the spectral setting \cite[Theorem 9.5.4.1]{Lurie-SAG}, when a geometric stack is locally Noetherian, the condition of a $\Theta$-functor preserving flat objects is automatic from preserving connective objects:

\begin{theorem}\label{theorem: noetherian geometric}
    Let $X$ be a locally Noetherian geometric stack and let $Y$ be any prestack. Then the association $f\mapsto f^*$ gives a fully faithful embedding 
    \[
        \Maps(Y,X)\to \Fun_\Theta^{\ModL_{\ZZ}}(\QCohL(X),\QCohL(Y))
    \]
    with essential image spanned by $\Theta$-functors preserving connective objects.
\end{theorem}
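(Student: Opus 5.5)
The plan is to reduce to Corollary \ref{corollary: essential image for geometric stack}, i.e.\ Theorem \ref{theorem: essential image for relative geometric stack} with $Z = \Spec(\ZZ)$. Full faithfulness is already covered by Theorem \ref{theorem: quasi-affine diagonal and full faithfulness}, because a geometric stack has affine, hence quasi-affine, diagonal. Geometric functors clearly preserve connective objects. It therefore remains to show one thing: if a $\Theta$-functor $F$ preserves connective objects, then it also carries flat objects of $\QCoh(X)$ to flat objects of $\QCoh(Y)$. Once this holds, Corollary \ref{corollary: essential image for geometric stack} shows $F$ is geometric.

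To prove flatness preservation, I reduce to $Y$ affine. Both the $\Theta$-functor category and flatness are compatible with limits over affines mapping to $Y$, since flatness on a prestack is checked after pullback to affines. Moreover, precomposing $F$ with pullback to an affine $S \to Y$ preserves connectivity.

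Now let $K \in \QCoh(X)$ be flat. By definition $K$ is connective, and $K$ has tor-amplitude $\geq 0$, meaning $K \otimes M$ is connective whenever $M \in \QCoh(X)^\heartsuit$ (cohomological indexing). Since $F$ preserves connective objects, $F(K)$ is connective. Proposition \ref{prop: lurie tor amplitude lemma} applies to the underlying colimit-preserving symmetric monoidal functor $F\colon \QCoh(X) \to \QCoh(Y)$, because $X$ is a geometric, hence quasi-geometric, locally Noetherian stack. It gives that $F(K)$ has tor-amplitude $\geq 0$. A connective object of $\QCoh(Y)$ with tor-amplitude $\geq 0$ is flat, so $F(K)$ is flat. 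The only structure used here is the underlying symmetric monoidal functor, so the $\Theta$-enhancement plays no role in this step.

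The main obstacle sits inside Proposition \ref{prop: lurie tor amplitude lemma}, whose proof is only sketched as an adaptation of \cite[Proposition 9.5.4.6]{Lurie-SAG}. I would check that its hypotheses match exactly: $X$ quasi-geometric and locally Noetherian in the sense of Definition \ref{def: def of locally Noetherian quasi-geometric stack}, and $F$ colimit-preserving and right $t$-exact. I would also check that the flat/tor-amplitude dictionary agrees with the paper's cohomological conventions. Beyond that, the argument is a direct assembly of earlier results.
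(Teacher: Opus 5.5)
Your argument is the same as the paper's: flat means connective with tor-amplitude $\geq 0$, Proposition~\ref{prop: lurie tor amplitude lemma} together with preservation of connectivity shows that $F$ preserves flat objects, and Corollary~\ref{corollary: essential image for geometric stack} finishes the proof (your reduction to affine $Y$ is harmless but unnecessary, since that proposition already allows any prestack $Y$). One correction to your gloss: in cohomological indexing, tor-amplitude $\geq 0$ means $H^i(K\otimes M)=0$ for $i<0$ whenever $M$ is discrete, not that $K\otimes M$ is connective; the latter holds for every connective $K$ and would make every connective object flat.
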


\begin{proof}
An object $K\in \QCoh(X)$ is flat if and only if it is connective and of tor-amplitude $\geq 0$. 
    By Proposition \ref{prop: lurie tor amplitude lemma}, if $F$ is a $\Theta$-functor preserving connective objects, then $F(K)$ is connective and of tor-amplitude $\geq 0$. Now apply Corollary \ref{corollary: essential image for geometric stack}.
\end{proof}

Our next goal is to calculate the essential image of $f \mapsto f^*$ when the target is a locally Noetherian quasi-geometric stack. Here it suffices for a functor to preserve connective objects and almost-perfect objects. Our proof follows \cite[Theorem 9.5.4.2]{Lurie-SAG} very closely, so that our only contribution is to keep track of the derived algebra structures. The ideas involved go back to \cite[§5]{BH17}.

First, we need the following lemma, which describes the coordinate ring of a quasi-affine scheme in terms of (derived) localization:

\begin{lemma}\label{lemma: j-loc derived algebras}
Let $A$ be an animated ring and $J\subset H^0(A)$ a finitely generated ideal. Let $L_J\colon \Mod_A\to \Mod_A^\jloc$ be the $J$-localization functor of \cite[Notation 7.2.4.6]{Lurie-SAG}. Let $j\colon V\into \Spec(A)$ be the quasi-compact open subscheme complementary to the vanishing locus of $J.$ Then:
\begin{enumerate}
    \item  $L_J(A)$ admits a canonical structure of a derived $A$-algebra.

    \item The functor $L_J$ lifts to a left adjoint functor $L^{\DAlg}_J\colon \DAlg_A\to \DAlg_A^\jloc$ where the target is the full subcategory of $\DAlg_A$ spanned by derived $A$-algebras whose underlying modules are $J$-local. The canonical morphism $B\to L_J^{\DAlg}(B)$ exhibits the target as the initial $J$-local derived $A$-algebra under $B.$

    \item The localization of derived algebras $\OO_{\Spec A}\to j_*\OO_V$ identifies with $A\to L_J(A).$
\end{enumerate}
\end{lemma}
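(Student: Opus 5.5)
The plan is to reverse the order of the statement. I will first establish (3) geometrically, deduce (1) from it, and then get (2) from the general theory of localizations in $\Theta$-categories developed in §\ref{ssec: localization}.

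For (3), the morphism $j\colon V\to\Spec A$ is a quasi-compact open immersion. By Lemma \ref{lemma: open substack algebra is compact localization}, $j_*\OO_V\in\DAlg_A$ is a compact-localization of $A$. On underlying $A$-modules, $j_*\OO_V=j_*j^*A$. The standard spectral identification, via \cite[§7.2.4]{Lurie-SAG} or simply \cite[Proposition 2.4.1.3]{Lurie-SAG}-type arguments passed through the forgetful functor, gives $j_*j^*M\simeq L_J(M)$ for every $M\in\Mod_A$. This holds because $j_*j^*$ is the localization of $\Mod_A$ killing $J$-nilpotent (supported on $V(J)$) modules, which is exactly the definition of $L_J$. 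The identification is compatible with the unit $M\to j_*j^*M$. Since the forgetful functors $\DAlg_A\to\CAlg_A\to\Mod_A$ are conservative, the localization $A\to j_*\OO_V$ of derived algebras has underlying module map $A\to L_J(A)$. This gives (3), and transporting the derived structure along it gives (1).

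For (2), I would define $L_J^{\DAlg}(B):=B\otimes_A j_*\OO_V$, where the tensor product is the pushout in $\DAlg_A$. Because $U\colon\DAlg\to\CAlg$ preserves colimits (definition of $\Theta$-category), the underlying module of this pushout is $B\otimes_A L_J(A)\simeq L_J(B)$. The last equivalence uses that $L_J$ is smashing, which follows from the projection formula for $j$. So $L_J^{\DAlg}(B)$ is $J$-local, and its underlying module map is $B\to L_J(B)$.

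For the universal property, let $C$ be a $J$-local derived $A$-algebra under $B$. I need
\[\Maps_{B/}(B\otimes_A j_*\OO_V,C)\simeq\Maps_{A/}(j_*\OO_V,C)\]
to be contractible. Since $j_*\OO_V$ is a localization of $A$, Proposition \ref{prop: preservation of epimorphisms}(3) reduces this to the same statement in $\CAlg_A$, and there the spectral result applies: $A\to L_J A$ is initial among $\Einfty$-maps to $J$-local algebras. Alternatively, one can argue directly: $C\to C\otimes_A L_JA=L_J C\simeq C$ is an equivalence, so maps out of the epimorphism $A\to L_JA$ exist and are unique. It then follows formally that $L_J^{\DAlg}$ is left adjoint to the inclusion $\DAlg_A^{\jloc}\subset\DAlg_A$.

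The main obstacle is the module-level identification $j_*j^*\simeq L_J$ for a derived (rather than spectral) open, compatibly with units. I expect to handle it by passing to the underlying spectral scheme. Both functors only depend on the underlying $\Einfty$-ring and on $|V|$, so \cite[§7.2.4]{Lurie-SAG} applies verbatim.
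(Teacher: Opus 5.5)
Your proposal is correct and follows essentially the same route as the paper. The derived structure on $L_J(A)$ is transported from $j_*\OO_V$ via the identification $\QCoh(V)\simeq\Mod_A^{\jloc}$, and $L_J^{\DAlg}(B)$ is defined as the pushout $L_J(A)\otimes_A B$ in $\DAlg_A$, whose underlying module is $L_J(B)$ because $L_J$ is smashing. The only real difference is in verifying the universal property: you show $\Maps_{A/}(L_J(A),C)$ is contractible for $J$-local $C$ using that $A\to L_J(A)$ is an epimorphism (e.g.\ via Proposition \ref{prop: preservation of epimorphisms}(3)), whereas the paper invokes the idempotence criterion \cite[Proposition 5.2.7.4]{HTT} together with conservativity of $\DAlg_A\to\Mod_A$.
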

\begin{proof}
    For item (1): by \cite[Proposition 7.2.4.9]{Lurie-SAG}, $L_J$, viewed as an endo-functor of $\Mod_A$, preserves all colimits, so $L_J(-) \simeq L_J(A) \otimes_A -$.  
    By \cite[Example 7.2.4.3]{Lurie-SAG},
    the pushforward $j_*\colon \QCoh(V)\to \Mod_A$ induces an equivalence $\QCoh(V) \simeq \Mod_A^{\jloc}$ so that $j_*\OO_V\simeq L_J(A)$.
    We endow the target with the derived algebra structure of the source.
For item $(2)$, we define $L^{\DAlg}_J(B):=L_J(A)\otimes_A B$ where the tensor product is one of derived algebras. It is clear that the derived algebra $L^{\DAlg}_J(B)$ is $J$-local. For the necessary adjoint property, one uses \cite[Proposition 5.2.7.4]{HTT} to check that for any $J$-local derived algebra $B$, the natural map $B\to L^{\DAlg}_J(B)$ is an isomorphism. This is clear because it is an isomorphism at the level of modules.
Item $(3)$ is now evident. 
\end{proof}

\begin{theorem}\label{theorem: noetherian quasi-geometric}
    Let $X$ be a locally Noetherian quasi-geometric stack and let $Y$ be any prestack. Then the association $f\mapsto f^*$ gives a fully faithful embedding 
    \[
        \Maps(Y,X)\to \Fun_\Theta^{\Mod_{\ZZ}^{\LSym}}(\QCohL(X),\QCohL(Y))
    \]
    with essential image spanned by $\Theta$-functors preserving connective objects and almost perfect objects.
\end{theorem}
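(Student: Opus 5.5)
Full faithfulness is Theorem \ref{theorem: quasi-affine diagonal and full faithfulness}, since a quasi-geometric stack has quasi-affine diagonal. Conversely, pullback along any $f\colon Y\to X$ is right t-exact and preserves almost perfect objects. For the essential image, let $F$ preserve connective and almost perfect objects. Both sides send colimits in $Y$ to limits, and both conditions can be checked after pulling back to affines. So we may assume $Y=S=\Spec R$ is affine. The plan is to verify condition (2) of Lemma \ref{lemma: bhl recognition geometric} with $T=\Spec\ZZ$.

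Choose a faithfully flat map $\Spec A\to X$ with $A$ Noetherian; it is quasi-affine because $X$ has quasi-affine diagonal. Following \cite[Theorem 9.5.4.2]{Lurie-SAG}, we use Lemma \ref{lemma: global quasi-affine into global sections} to factor $W=\Spec A\to X$ as a quasi-compact open immersion $W\hookrightarrow \Spec_X(\BB)$, where $\BB=\tau^{\leq 0}\OO_W\in\DAlg(X)^{\leq 0}$. The affine morphism $\Spec_X(\BB)\to X$ has $\QCohL(\Spec_X\BB)\simeq\Mod_\BB(\QCohL(X))$ by Proposition \ref{prop: QCohL on quasi-affine morphism}. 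Since $F$ is a $\Theta$-functor, it induces a $\Theta$-functor
\[F_\BB\colon \Mod_\BB(\QCohL(X))\to\Mod_{F(\BB)}(\QCohL(S)).\]
Because $F$ preserves connectives, $F(\BB)$ is connective, and $\Spec F(\BB)$ is affine over $S$.

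Next we identify $F(\OO_W)$. The open $W\subset\Spec_X\BB$ is the complement of a closed subset cut out, after descent, by a quasi-coherent ideal of finite type. We want to express $\OO_W$ as the $\mathcal J$-localization of $\BB$ in the sense of Lemma \ref{lemma: j-loc derived algebras}. Concretely, $\OO_W$ is a colimit of Koszul-type constructions on an almost perfect object, and $F$ is symmetric monoidal and preserves colimits and almost perfect objects. From this we would deduce $F(\OO_W)\simeq L_{J'}^{\DAlg}F(\BB)$, where $J'\subset H^0F(\BB)$ is the finitely generated image ideal. By Lemma \ref{lemma: j-loc derived algebras}(3), $F(\OO_W)$ is then the coordinate ring of a quasi-compact open $V\subset\Spec F(\BB)$, so $V\to S$ is quasi-affine. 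The derived structure enters only through Lemma \ref{lemma: j-loc derived algebras}(2) and Proposition \ref{prop: preservation of epimorphisms}, which show the localization is automatically a map in $\DAlg$.

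It remains to show $V\to S$ is faithfully flat. Flatness of $\OO_W$ over $X$ means $\OO_W$ is connective of tor-amplitude $\geq0$, and Proposition \ref{prop: lurie tor amplitude lemma} shows $F(\OO_W)$ has the same properties, so it is flat. For faithfulness, take $\kappa$ a field with a map $R\to\kappa$, compose $F$ with base change to $\kappa$, and argue as in \cite[Claim 4.2]{BH17}: if $F(\OO_W)\otimes_R\kappa=0$, descent along $W\to X$ forces $\OO_S\otimes\kappa=0$. Lemma \ref{lemma: bhl recognition geometric} then gives that $F$ is geometric.

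The main obstacle is the localization step. The closed complement must be described globally on $X$ by an almost perfect ideal. Here Noetherianity is used, via Proposition \ref{prop: qcoh noetherian for noetherian stack}: we would write $\mathcal J$ as a filtered union of coherent subsheaves and use quasi-compactness of $W$ to pick one. Then we must check that $F$ commutes with the $J$-localization, i.e.\ that $F$ carries the $J$-nilpotent/$J$-local decomposition of $\Mod_\BB$ to the corresponding decomposition over $F(\BB)$. The first thing to try is testing on the compact generators $\BB/\!\!/f$ of the $J$-nilpotent part, with $f$ running over local generators.
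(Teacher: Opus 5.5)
Your outline follows the paper (and \cite[Theorem 9.5.4.2]{Lurie-SAG}) up to the crucial point. You reduce to $Y=\Spec R$ and aim for Lemma \ref{lemma: bhl recognition geometric}. You factor $W\hookrightarrow\Spec_X(\BB)$ with $\BB=\tau^{\leq0}\OO_W$, use Proposition \ref{prop: qcoh noetherian for noetherian stack} to get a coherent $\FF\to J$ generating $J$, and let $K\subseteq H^0F(\BB)$ be generated by the image of $H^0F(\FF)$. All of this matches.

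\textbf{Main gap.} The identification $F(\OO_W)\simeq L^{\DAlg}_K F(\BB)$ is the heart of the proof, and you correctly flag it as the main obstacle, but you do not supply it. Your suggested route does not work as stated:
\begin{itemize}
\item $J$ has no global generators on the stack $X$, and $F$ is only defined on global objects of $\QCoh(X)$. So you cannot feed $F$ local Koszul objects on generators $f$.
\item The claim that $\OO_W$ is a colimit of Koszul-type constructions on an almost perfect object is not established globally.
\item Saying that $F$ carries the $J$-nilpotent/$J$-local decomposition to the $K$-decomposition is equivalent to the identification you want, not a way to prove it.
\end{itemize}
The missing idea is the paper's. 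Use the fiber sequence $\tau^{\geq1}\OO_W[-1]\to\BB\to\OO_W$ together with the fact that $\OO_W$ is bounded above. The fiber then has only finitely many nonzero cohomology sheaves. Each is a $J$-power-torsion $H^0(\BB)$-module, since it restricts to zero on $W$. Write each as a filtered colimit of modules killed by $J^n$ and use $\FF^{\otimes n}\to J^n$; since $F$ is exact, colimit-preserving and symmetric monoidal, $F(\tau^{\geq1}\OO_W[-1])$ is $K$-nilpotent. Separately, one shows $F(\OO_W)$ is $K$-local, as in Lurie (cf.\ \cite[Lemma 5.6]{BH17}). The universal property in Lemma \ref{lemma: j-loc derived algebras}(2) then identifies $F(\BB)\to F(\OO_W)$ with the derived $K$-localization. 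Boundedness of $\OO_W$ is essential here and is absent from your plan.

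\textbf{Flatness error.} Since $W\to X$ is only quasi-affine, $\OO_W$ is in general not connective. For the affine plane with doubled origin covered by its two charts, $\OO_W$ has nonzero $\mathcal H^1$. Likewise $F(\OO_W)\simeq\OO_V$ need not be connective. So ``$\OO_W$ is connective of tor-amplitude $\geq0$, hence $F(\OO_W)$ is flat'' is false. What does hold:
\begin{itemize}
\item $\OO_W$ has tor-amplitude $\geq0$, by the projection formula and left t-exactness of pushforward. Hence so does $F(\OO_W)$, by Proposition \ref{prop: lurie tor amplitude lemma}.
\item Flatness of $V\to S$ then follows on the basic opens $D(x)$, $x\in K$. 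There $\OO_V[1/x]\simeq F(\BB)[1/x]$ is connective of tor-amplitude $\geq0$.
\end{itemize}
Similarly, ``descent along $W\to X$'' is unavailable for surjectivity, since $F$ need not commute with the totalization. Instead, the cofiber of $\OO_X\to\OO_W$ has tor-amplitude $\geq0$ by faithful flatness. So its image under $F(-)\otimes_R\kappa$ lies in $\Mod_\kappa^{\geq0}$. If $F(\OO_W)\otimes_R\kappa$ vanished, that image would be $\kappa[1]$, which is not in $\Mod_\kappa^{\geq0}$.
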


\begin{proof}
Since both sides take colimits in $Y$ to limits, we may assume $Y$ is an affine scheme.
    Suppose $F\in \Fun_\Theta^{\Mod_{\ZZ}^{\LSym}}(\QCohL(X),\QCohL(Y))$ preserves connective objects and almost perfect objects. Our goal is to show that $F$ is geometric.

    Since $X$ is a quasi-geometric stack, there is a faithfully flat quasi-affine map $V\to X$ where $V$ is affine. Let $\OO_V$ be the pushforward of the structure sheaf of $V$ to $X$. By Lemma \ref{lemma: bhl recognition geometric} it suffices to show that $F(\OO_V)=\OO_{Y'}$ for some faithfully flat quasi-affine map $Y'\to Y$. 
    
    If $\overline{V}=\Spec_X(\tau^{\leq 0}\OO_V)$,
    so that $\OO_{\overline{V}} = \tau^{\leq 0} \OO_V$,
    then by Lemma \ref{lemma: global quasi-affine into global sections}, there is a factorization $V\xrightarrow{j}\overline{V}\to X$ where $j$ is a quasi-compact open immersion and $\overline{V}\to X$ is an affine morphism. 
    Let $J \subseteq H^0(\OO_{\overline{V}})$ be an ideal of definition of the complement of $V \to \overline{V}$.
    Since $X$ is locally Noetherian, Proposition \ref{prop: qcoh noetherian for noetherian stack} applies, so $J$ is a filtered colimit of coherent sheaves $\FF_\alpha\in \mathrm{Coh}(X)^\heartsuit$. 
    Let $J_\alpha \subseteq J$ be the subideal generated by the image of $\FF_\alpha\to J$. The nonvanishing locus of $J_\alpha$ is a subfunctor $V_\alpha$ of $V.$ Since $V$ is quasi-compact and the colimit is filtered, there is an index $\alpha$ so that $V=V_{\alpha}$. 
    Thus, we may replace $J$ by $J_\alpha$ and thus assume that there is $\FF \in \Coh(X)^\heartsuit$ and $i: \FF \to J$ in $\QCoh(X)^\heartsuit$ such that the image of $i$ generates $J$.

Let $R$ be the coordinate ring of $Y$ and let $A$ be the connective derived $R$-algebra $F(\OO_{\overline{V}})$. Set $\overline{Y'}=\Spec(A)$. 
The map $i\colon \FF\to J$ induces a map of $R$-modules 
\begin{equation}\label{eq: noetherian map of generators after F}
    F(\FF)\to F(J)\to F(H^0(\OO_{\overline{V}})).
\end{equation}
Since $F$ is right $t$-exact, $F$ induces an isomorphism 
$H^0(A) = H^0(F(\OO_{\overline{V}})) \overset{\sim}{\to} H^0(F(H^0(\OO_{\overline{V}}))),$
which combined with \eqref{eq: noetherian map of generators after F}
gives a canonical map 
\[ \lambda \colon H^0(F(\FF)) \to H^0(A).\]
The object $\FF\in \Coh(X)^\heartsuit$ is connective and almost perfect, so $F(\FF) \in \Mod_R$ is connective and almost perfect, so $H^0(F(\FF))$ is a finitely presented $H^0(R)$-module. Thus, the image of $\lambda\colon H^0(F(\FF))\to H^0(A) $ generates a finitely presented ideal $K \subseteq H^0(A)$.

Let $Y'\subset \overline{Y'}$ be the quasi-compact open subscheme complementary to the vanishing locus of $K$. By definition, the composite $Y'\to \overline{Y'}\to Y$ is quasi-affine.

It suffices to show the following two facts:
\begin{enumerate}
    \item  $F(\OO_V)\simeq \OO_{Y'}$ in $\DAlg(Y)$, and
    \item $Y'\to Y$ is a faithfully flat morphism.
\end{enumerate}

By Lemma \ref{lemma: j-loc derived algebras}, $\OO_{Y'}$ is the $K$-localization of $A$. By Lemma \ref{lemma: j-loc derived algebras} again, to prove (1), it suffices to show $F(\OO_V)$ is the $K$-localization of $A$. 
There is a fiber sequence $$\tau^{\geq 1}\OO_V[-1]\to \tau^{\leq  0}\OO_V\to \OO_V$$ in $\QCoh(X)$ where the second morphism is a morphism of derived algebras.
Applying $F$, there is a fiber sequence $$F(\tau^{\geq 1}\OO_V[-1])\to A\to F(\OO_V)$$ in $\Mod_R$. 
By the universal property of Lemma \ref{lemma: j-loc derived algebras}, it suffices to show that
\begin{enumerate}[(i)]
    \item The $R$-module $F(\tau^{\geq 1}\OO_V[-1])$ is $K$-nilpotent,
    \item The $R$-module $F(\OO_V)$ is $K$-local.
\end{enumerate}
Since $\OO_V$ is an eventually connective algebra over $\OO_{\overline{V}}$ and so has finitely many cohomology groups in positive degree, the proof of (i) follows exactly as in the proof of \cite[Theorem 9.5.4.2]{Lurie-SAG}. 
For item (ii), again the proof of \loccit{} works.\footnote{On \cite[803]{Lurie-SAG}, there is a broken reference; the content of the reference is \cite[Lemma 5.6]{BH17}.}

After the previous two observations, it suffices to show $Y'\to Y=\Spec(R)$ is a faithfully flat morphism. This again follows as in \loccit{}
\end{proof}

\section{Reconstruction for derived formal prestacks}\label{section: formal}

In this section we explain how to apply Theorem \ref{theorem: essential image for relative geometric stack} to derived formal stacks.
In §\ref{subsec: eqt def formal stacks}, we show that for an animated ring $A$ and a finitely generated ideal $J\subset H^0(A)$, one can define $J$-adic formal $A$-prestacks as prestacks over $\Spf(A)$, recovering the usual definition as presheaves of spaces on $J$-nilpotent $A$-algebras \cite{APCII}. Then we calculate the $\Theta$-structure on $\QCohL(\Spf(A))$ in terms of $J$-complete $A$-modules and derived algebras in §\ref{subsec: theta structure formal modules}. Our main theorem is Theorem \ref{theorem: main thm formal stacks}.

\subsection{Equivalence of definitions of derived formal stacks}\label{subsec: eqt def formal stacks}
\begin{notation}\label{notation: def of formal prestack}
Let $A$ be an animated ring and let $J\subset H^0(A)$ be a finitely generated ideal. 
\end{notation}

\begin{definition}\label{def: definition of nilpotent}
 
An animated $A$-algebra $B$ is called $J$-nilpotent if $H^0(B)$ is annihilated by a power of $J.$
Let $\Nilp_{A,J}\subset \aring_A$ be the full subcategory of $J$-nilpotent $A$-algebras.
\end{definition}

In the literature, for example \cite{APCII}, one finds a definition of $J$-adic formal stacks as presheaves of spaces on $\Nilp_{A,J}$. Our goal is to show that this definition can be efficiently packaged into a relative situation where Theorem \ref{theorem: essential image for relative geometric stack} is applicable.


\begin{definition}[\cite{Lurie-SAG}, §8.1.2]\label{def: spf}
    Let $\Spf(A,J)$ be the subfunctor of $\Spec(A)$ whose value on $B\in \aring$ is the 0-full\footnote{A morphism $X \to Y$ of spaces is 0-full if for all $x \in X$, the morphism $\pi_\ast(X,x) \to \pi_\ast(Y,y)$ is an isomorphism (that is, it is a `union of connected components').}
    subspace of $\Maps_{\aring}(A,B)$ consisting of maps which annihilate some power of $J.$  
    When $J$ is clear from context, we will write $\Spf(A)$ for $\Spf(A,J)$.
\end{definition}
The prestack $\Spf(A)$ can be presented explicitly as an ind-scheme:
\begin{lemma}\label{lemma: spf as ind-scheme}
    Let $A$ be an animated ring and $J \subset H^0(A)$ a finitely generated ideal.
    Then there exists a $\mathbf{Z}_{>0}$-indexed tower
\[
\cdots \longrightarrow A_4 \longrightarrow A_3 \longrightarrow A_2 \longrightarrow A_1
\]
of $J$-nilpotent animated $A$-algebras with the following properties:
\begin{enumerate}
    \item The maps $A_{i+1} \to A_i$ induce a surjection
    \[
    H^0(A_{i+1}) \longrightarrow H^0(A_i)
    \]
    with nilpotent kernel.

    \item The maps $\Spec(A_n) \to \Spf(A)$ induce an equivalence 
    \[ \ccolim_n \Spec(A_n) \overset{\sim}{\to} \Spf(A).\]
    \item Each $A_n$ is perfect as an $A$-module.
\end{enumerate}
\end{lemma}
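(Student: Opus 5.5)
The plan is to take the tower of derived Koszul quotients. Choose generators $x_1,\dots,x_r \in H^0(A)$ of $J$ and lift each to a point of $\Omega^\infty A$, i.e.\ a map $\ZZ[t_1,\dots,t_r] \to A$ of animated rings. Set
\[ A_n := A \otimes_{\ZZ[t_1,\dots,t_r]} \ZZ[t_1,\dots,t_r]/(t_1^n,\dots,t_r^n), \]
where the tensor product is taken in animated rings. Since $\ZZ[t]/(t^n)$ is the cofiber of $t^n\colon \ZZ[t]\to\ZZ[t]$, the ring $A_n$ is the Koszul complex $A/\!/(x_1^n,\dots,x_r^n)$, which is a perfect $A$-module; this gives (3). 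We also have $H^0(A_n)=H^0(A)/(x_1^n,\dots,x_r^n)$, which is annihilated by $J^{r(n-1)+1}$, so each $A_n$ is $J$-nilpotent. The transition maps $A_{n+1}\to A_n$ come from the quotient maps $\ZZ[t]/(t^{n+1})\to \ZZ[t]/(t^n)$ of discrete rings, tensored together. On $H^0$ they are surjections with kernel $(x_i^n)/(x_i^{n+1})$, which is nilpotent; this gives (1).

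For (2), each $\Spec(A_n)\to\Spec(A)$ lands in $\Spf(A)$ because $A\to A_n$ kills a power of $J$ on $H^0$, and the maps are compatible. To see that $\ccolim_n \Spec(A_n)\to \Spf(A)$ is an equivalence, I evaluate on an animated ring $B$; colimits of prestacks are computed pointwise. First reduce to the case $A=\ZZ[t_1,\dots,t_r]$ and $J=(t_i)$. Both $\Spec(A_n)=\Spec(A)\times_{\mathbb{A}^r}\Spec(\ZZ[t]/(t^n))$ and $\Spf(A)=\Spec(A)\times_{\mathbb{A}^r}\Spf(\ZZ[t])$ are base changes, the latter because annihilation of a power of $J$ is a condition on $H^0$ checked on the images of the $t_i$. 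Filtered colimits in $\Spc$ commute with fiber products, so the base change of the colimit is the colimit of the base changes.

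Next reduce to $r=1$, since $\ZZ[t]/(t_1^n,\dots,t_r^n)$ is the tensor product of the one-variable rings and the diagonal $\mathbf{Z}_{>0}\to \mathbf{Z}_{>0}^r$ is cofinal. For $r=1$, a point of $\Spec(\ZZ[t]/(t^n))(B)$ is an element $b\in\Omega^\infty B$ together with a nullhomotopy of $b^n$. I must show that the colimit over $n$ of these spaces is the union of the components of $\Omega^\infty B$ where $b$ is nilpotent in $H^0(B)$. Each fiber over $b$ is either empty or a torsor for $\Omega^{\infty+1}B$. Under the transition maps, multiplication by $b$ acts on these torsors. In the colimit, the fiber over a nilpotent $b$ is therefore $\ccolim(\Omega^{\infty+1}B \xrightarrow{b} \Omega^{\infty+1}B\xrightarrow{b}\cdots)$. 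This colimit is $\Omega^{\infty+1}$ of the localization $B[b^{-1}]$, which vanishes since $b$ is nilpotent. So the colimit fiber is contractible.

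The main obstacle I expect is making this torsor and colimit identification rigorous, rather than heuristic. I would try to phrase it as the statement that $\ccolim_n \ZZ[t]/(t^n)$ represents the condition ``$t$ nilpotent''. In module language, this means $\ccolim_n \Maps(\ZZ[t]/(t^n), B)$ is computed by the pro-system of Koszul complexes, which is pro-equivalent to the pro-system of truncations of $\ZZ[t]/(t^n)$. The vanishing of $\ccolim$ of multiplication by $b$ on $\pi_*B$ for nilpotent $b$ then finishes the argument.
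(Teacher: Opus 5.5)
Your proof is correct. The paper gives no argument of its own here beyond citing a derived adaptation of Lurie's SAG Lemma 8.1.2.2, and your self-contained Koszul-tower proof is the natural derived version of that argument: tensor along $\ZZ[t_1,\dots,t_r]\to A$ with the discrete quotients $\ZZ[t_1,\dots,t_r]/(t_1^n,\dots,t_r^n)$, reduce to the universal one-variable case, and compute the colimit fiberwise as the telescope of multiplication by $b$. The step you flag as the main obstacle is easily made rigorous. Since $\ZZ[t]/(t^n)$ is discrete, the space $\Maps_{\ZZ[t]}(\ZZ[t]/(t^{n+1}),\ZZ[t]/(t^n))$ is contractible, so by Yoneda the natural map $h\mapsto b\cdot h$ on spaces of nullhomotopies must coincide with the transition map, and your final module-theoretic paragraph is unnecessary.
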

\begin{proof}
  A proof adapting \cite[Lemma 8.1.2.2]{Lurie-SAG} in the derived setting is given in \cite[Lemma 2.21]{dalg_formal}. See also \cite[Lemma 4.13]{ChoughFormal}.
\end{proof}

\begin{definition}[Adic formal stacks over an animated ring]\label{def: formal stack}
    A prestack $X\colon \aring\to \Spc$ is called a $J$-adic formal $A$-prestack (or a formal $A$-stack if $J$ is clear from context) if we are given a morphism $X \to \Spf(A)$. 
\end{definition}




\begin{lemma}\label{lem: factorization is contractible}
    Let $X$ be a prestack over $\Spec(A)$. Then the space of factorizations
    \[
\begin{tikzcd}[ampersand replacement=\&]
	\& X \\
	{\Spf(A)} \& {\Spec(A)}
	\arrow[dashed, from=1-2, to=2-1]
	\arrow[from=1-2, to=2-2]
	\arrow[from=2-1, to=2-2]
\end{tikzcd}
    \]
    is $(-1)$-truncated, i.e., $\Maps_{\PreStk_A}(X,\Spf(A))$ is either empty or contractible. 
\end{lemma}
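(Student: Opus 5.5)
The plan is to reduce the statement to the fact that $\Spf(A)\to\Spec(A)$ is a monomorphism of prestacks. Granting this, $\Maps_{\PreStk_A}(X,\Spf(A))$ is the fiber of
\[
\Maps_{\PreStk}(X,\Spf(A)) \to \Maps_{\PreStk}(X,\Spec(A))
\]
over the given structure map. Postcomposition with a monomorphism is a monomorphism of mapping spaces, since monomorphisms in an $\infty$-category are characterized by this property and monomorphisms of presheaves are detected objectwise. A fiber of a monomorphism of spaces is either empty or contractible, which is exactly $(-1)$-truncatedness.

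So the key step is that $\Spf(A)\to\Spec(A)$ is a monomorphism. Both are presheaves on $\aring$, so it suffices to check the maps $\Spf(A)(B)\to\Spec(A)(B)=\Maps_{\aring}(A,B)$ objectwise. By Definition~\ref{def: spf}, $\Spf(A)(B)$ is a $0$-full subspace of $\Maps_{\aring}(A,B)$, i.e.\ a union of connected components. The inclusion of a union of components is a monomorphism of spaces: its homotopy fibers are empty or contractible, because it induces isomorphisms on all $\pi_n$ for $n\ge1$ and is injective on $\pi_0$. Hence each component map is $(-1)$-truncated, and therefore so is the map of presheaves. One also checks that the property of annihilating a power of $J$ depends only on the path component of $A\to B$. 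It is a condition on the induced map $H^0(A)\to H^0(B)$, so $\Spf(A)$ is indeed a well-defined subfunctor, functorial in $B$.

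For general $X$, write $X$ as a colimit of affines. Then $\Maps_{\PreStk_A}(X,\Spf(A))$ is a limit of the corresponding spaces for affines $\Spec(B)\to X$, and a limit of $(-1)$-truncated spaces is $(-1)$-truncated. This gives a second, more hands-on route that avoids the abstract mapping-space argument. For affine $X=\Spec(B)$ over $\Spec(A)$, the space in question is the fiber of $\Spf(A)(B)\hookrightarrow\Maps(A,B)$ over the structure map. It is contractible if the structure map kills a power of $J$, and empty otherwise.

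I expect no real obstacle. The only point needing care is that $0$-fullness (a union of components), rather than mere injectivity on $\pi_0$, is what guarantees the inclusion is a monomorphism in the $\infty$-categorical sense.
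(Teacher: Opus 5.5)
Your proposal is correct and matches the paper's proof. The paper reduces to affine $X=\Spec(B)$ by writing $X$ as a colimit and using that a limit of $(-1)$-truncated spaces is $(-1)$-truncated, then notes that the space of factorizations is the fiber of the $0$-full inclusion $\Spf(A)(B)\to\Spec(A)(B)$; this is exactly your second route, and your first route via the mapping-space characterization of monomorphisms just repackages the same objectwise $(-1)$-truncatedness.
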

\begin{proof}
    Since $\Maps_{\PreStk_A}(X,\Spf(A))$ transforms colimits in $X$ to limits and since a limit of $(-1)$-truncated spaces is $(-1)$-truncated, we are reduced to showing the statement for $X=\Spec(B).$
    The space of such factorizations is the homotopy fiber of $\Spf(A)(B) \to \Spec(A)(B)$, which is $(-1)$-truncated by definition. 
\end{proof}

\begin{proposition}\label{prop: equivalence of def of prestacks}
    The composite of the functor $\PreStk_{/\Spf(A)} \to \PreStk_{/\Spec(A)}$ and the restriction $\PreStk_{/\Spec(A)} \to \PreStk(\Nilp_{A,J})$ is an equivalence of categories
    \[ 
        \Res: \PreStk_{\Spf(A)} \overset{\sim}{\to} \PreStk(\Nilp_{A,J}).
    \]
\end{proposition}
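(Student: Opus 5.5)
We will identify both sides with accessible presheaves of spaces on a common category, namely $\Nilp_{A,J}$, and check that $\Res$ is the resulting equivalence. By the discussion in §\ref{section: preliminaries}, $\PreStk_{/\Spf(A)}$ is equivalent to the category of accessible $\Spc$-valued functors on $\aring_{/\Spf(A)}$, the category of animated rings $B$ equipped with a point of $\Spf(A)(B)$. The first step is therefore to show that the forgetful functor $\aring_{/\Spf(A)} \to \aring_A$ is fully faithful with essential image $\Nilp_{A,J}$.

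Full faithfulness is a consequence of Lemma \ref{lem: factorization is contractible} applied to $X = \Spec(B)$: for $B \in \aring_A$, the fiber of $\Spf(A)(B) \to \Spec(A)(B)$ over the structure map is empty or contractible. Consequently a lift of $B$ to $\aring_{/\Spf(A)}$ is unique when it exists, and the mapping spaces in $\aring_{/\Spf(A)}$ agree with those in $\aring_A$. For the essential image, Definition \ref{def: spf} says that the structure map $A \to B$ lies in $\Spf(A)(B)$ if and only if it annihilates some power of $J$ on $H^0$. This is exactly the condition that $H^0(B)$ is killed by a power of $J$, i.e. that $B \in \Nilp_{A,J}$ (Definition \ref{def: definition of nilpotent}). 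So $\aring_{/\Spf(A)} \simeq \Nilp_{A,J}$ as full subcategories of $\aring_A$.

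Next we check that, under this identification, the composite $\Res$ is the equivalence $\Fun^{acc}(\aring_{/\Spf(A)},\Spc) \simeq \PreStk(\Nilp_{A,J})$. Concretely, $\Res$ sends $X \to \Spf(A)$ to $B \mapsto X(B)\times_{\Spf(A)(B)}\{\ast\}$, and since $\Spf(A)(B)\to\Spec(A)(B)$ is the inclusion of a point for $B \in \Nilp_{A,J}$, this is $X(B)\times_{\Spec(A)(B)}\{\ast\}$, which is the restriction along $\PreStk_{/\Spf(A)}\to\PreStk_{/\Spec(A)}\to\PreStk(\Nilp_{A,J})$. An inverse is given by left Kan extension: a presheaf $G$ on $\Nilp_{A,J}$ extends to the prestack $\ccolim_{(\Spec B \to G)} \Spec(B)$, which lies over $\Spf(A)$ because each $\Spec(B)$ with $B$ $J$-nilpotent does. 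Alternatively, one can extend $G$ by the empty set on $\aring_A \setminus \Nilp_{A,J}$ over $\Spf(A)$.

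The main obstacle we expect is the accessibility bookkeeping. We must make sure that $\PreStk(\Nilp_{A,J})$ means accessible presheaves, and that restriction and extension preserve accessibility. For this we would use that $\Nilp_{A,J}$ is closed under $\kappa$-filtered colimits in $\aring_A$ for large $\kappa$, since $H^0$ commutes with filtered colimits and $J$ is finitely generated, so that a uniform power of $J$ kills $H^0$ of a filtered colimit. Care is needed here, since the power need not be uniform, and one may have to choose $\kappa$ larger than $\omega$ or argue via the ind-presentation of Lemma \ref{lemma: spf as ind-scheme}.
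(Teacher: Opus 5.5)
Your proof is correct, but it is organized differently from the paper's. You reduce the statement to one about indexing categories. Since $\Spf(A)\to\Spec(A)$ is a monomorphism (Lemma \ref{lem: factorization is contractible}), the category of elements $\aring_{/\Spf(A)}$ is a full subcategory of $\aring_{/\Spec(A)}=\aring_A$, and Definition \ref{def: spf} identifies it with $\Nilp_{A,J}$. The result then follows from the identification $\PreStk_{/X}\simeq\Fun^{acc}(\aring_{/X},\Spc)$ recorded in Section \ref{section: preliminaries}, once you check that this equivalence is $\Res$, which you do.

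The paper never identifies categories of elements. It takes the left Kan extension $i_!$ and uses the same $(-1)$-truncatedness to see that $i_!X\to\Spec(A)$ factors uniquely through $\Spf(A)$. This gives an adjunction $i_!\dashv i^*$ between $\PreStk(\Nilp_{A,J})$ and $\PreStk_{/\Spf(A)}$, and the paper concludes by checking the unit and counit on representables, using that both functors preserve colimits.

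Your route makes the reason for the equivalence transparent: the slice over a $0$-full subfunctor is presheaves on a full subcategory. It also avoids the generation-by-representables step. The cost is that it leans on the unproved straightening statement from the preliminaries and its accessibility conventions. The paper's route is self-contained modulo standard Kan extension facts and gives the explicit inverse.

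Two small corrections.

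First, $\Spf(A)(B)\to\Spec(A)(B)$ is the inclusion of a union of components, not of a point. What you actually use is that its fiber over the structure map is contractible when $B$ is $J$-nilpotent.

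Second, the accessibility worry in your last paragraph is unfounded, and the remark that the power of $J$ need not be uniform is mistaken. Accessibility transports automatically along the equivalence $\aring_{/\Spf(A)}\simeq\Nilp_{A,J}$. In any case, $\Nilp_{A,J}$ is closed under all filtered colimits in $\aring_A$ with a uniform exponent. If $J^n$ kills $1\in H^0(B_i)$ at a single stage, it kills $1$, hence all of $H^0$, in the colimit. More generally, any $A$-algebra receiving an $A$-algebra map from a $J$-nilpotent one is $J$-nilpotent. So neither a choice of $\kappa>\omega$ nor an appeal to Lemma \ref{lemma: spf as ind-scheme} is needed.
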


\begin{proof}
We will describe the inverse functor of $\mathrm{Res}$. 
Let $i\colon \Nilp_{A,J}\into \aring_A$ be the inclusion.
Then 
\[i^*\colon \Fun(\aring_A,\Spc)\to \Fun(\Nilp_{A,J}, \Spc)\]
has a habitual left adjoint $i_!\dashv i^*$ given by left Kan extension.
Note that $i^*$ and $i_!$ preserve the full subcategories of accessible functors.
Now if $X \in \PreStk(\Nilp_{A,J})$, then $i_!(X) \to \Spec(A)$ factors through $\Spf(A)$.
For we can write $X$ as a colimit of representable functors, so that
\[ i_!X = \ccolim_{x\in X(B), B \in \Nilp_{A,J}} \Spec B.\]
Since $\Spf(A) \to \Spec(A)$ is $(-1)$-truncated (Lemma \ref{lem: factorization is contractible}), the space of factorizations of $X \to \Spec(A)$ through $\Spf(A)$ is the limit over all points of $X$ of a contractible space, hence exists and is unique up to contractible ambiguity.

We have constructed an adjunction
\[
i_!: \PreStk(\Nilp_{A,J}) \rightleftarrows \PreStk_{/\Spf(A)} : i^*.
\]
Note that the functor $i^*$ also preserves colimits since it also has a right adjoint (right Kan extension). To check that the adjunction is an equivalence, it thus suffices to check for representable functors. For $\Spec(B) \in \PreStk(\Nilp)$ this is trivial; for $\Spec(B) \in \PreStk_{/\Spf(A)}$, $B$ is $J$-nilpotent and hence this is also trivial.
\end{proof}

\subsection{$\Theta$-categories of quasi-coherent sheaves on formal stacks}\label{subsec: theta structure formal modules}

\begin{notation}
   Let $A$ and $J\subset H^0(A)$ be as in Notation \ref{notation: def of formal prestack}. Then $\Mod_A^\jcomp$ is the full subcategory of $\Mod_A$ spanned by $J$-complete $A$-modules \cite[§7.3]{Lurie-SAG}.
   Similarly $\DAlg_A^\jcomp$ is the full subcategory of $\DAlg_A$ consisting of derived algebras whose underlying modules are $J$-complete \cite[§2.1]{dalg_formal}.
\end{notation}

In \cite{dalg_formal}, the following results are proved (the first of which is presumably well known\footnote{For example, see \cite[\href{https://stacks.math.columbia.edu/tag/0H0G}{Tag 0H0G}]{stacks-project} for the statement in a somewhat classical setting, the arguments of which directly inspired the proof in \cite{dalg_formal}.}
):

\begin{theorem}[\cite{dalg_formal}, Theorem 2.26]\label{thm: qcoh of spf}
    There are canonical equivalences of categories
    \begin{enumerate}
        \item $\QCoh(\Spf(A))=\Mod_A^\jcomp$, and
        \item $\DAlg(\Spf(A))=\DAlg_A^\jcomp$.
    \end{enumerate}
\end{theorem}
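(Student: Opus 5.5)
To prove Theorem \ref{thm: qcoh of spf}, I would start from the ind-presentation of Lemma \ref{lemma: spf as ind-scheme}. Since $\QCoh$ and $\DAlg$ send colimits of prestacks to limits, it gives
\[ \QCoh(\Spf(A)) \simeq \lim_n \Mod_{A_n}, \qquad \DAlg(\Spf(A)) \simeq \lim_n \DAlg_{A_n}, \]
where the transition functors are base change along $A_{n+1}\to A_n$. The task is then to compare these limits with $J$-complete $A$-modules and algebras.

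\textbf{Step 1: the comparison functor.} I would write $\Phi\colon \Mod_A \to \lim_n \Mod_{A_n}$ for $M \mapsto (M\otimes_A A_n)_n$. Its right adjoint is $\Psi\colon (M_n)_n \mapsto \lim_n M_n$, computed in $\Mod_A$.

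\textbf{Step 2: $\Phi$ inverts $J$-completion.} A module $N$ satisfies $N\otimes_A A_n = 0$ for all $n$ exactly when $N$ is $J$-local. This should follow from two facts: each $A_n$ is perfect and $J$-nilpotent, so it lies in the thick subcategory generated by the Koszul-type objects $A/\!\!/x^k$; and conversely $A/\!\!/(x_1,\dots,x_r)$ is built from $A_n$ for some $n$. The second fact is where I would use property (1) of Lemma \ref{lemma: spf as ind-scheme}. Hence $\Phi$ factors through the localization $\Mod_A \to \Mod_A^\jcomp$ and kills exactly the $J$-local modules.

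\textbf{Step 3: the unit and counit.} For $M$ $J$-complete, the unit $M\to \lim_n M\otimes_A A_n$ is an equivalence, since the tower $\{A_n\}$ is pro-equivalent to the tower $\{A/\!\!/(x^k)\}$ and $M \simeq \lim_k M\otimes A/\!\!/x^k$ \cite[§7.3]{Lurie-SAG}. For the counit on a compatible family $(M_n)$, I would show $(\lim_m M_m)\otimes_A A_n \to M_n$ is an equivalence. Because $A_n$ is perfect, tensoring with it commutes with the limit, so the map becomes
\[ \lim_m (M_m\otimes_A A_n) \simeq \lim_m M_m\otimes_{A_m}(A_m\otimes_A A_n). \]
I would then use that the pro-system $\{A_m\otimes_A A_n\}_m$ is pro-equivalent to the constant $A_n$, a pro-idempotence property of $J$-adic towers. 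I expect this pro-equivalence to be the main obstacle: the maps $A_m\otimes_A A_n\to A_n$ are not equivalences, and only the pro-system is constant. I would first try to reduce, via the pro-equivalence with Koszul towers, to the standard computation $\{A/\!\!/x^m\otimes_A A/\!\!/x^n\}_m \simeq A/\!\!/x^n$ as pro-objects, which holds by Tor-vanishing up to multiplication by powers of $x$.

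\textbf{Step 4: derived algebras.} The forgetful functors $\DAlg_{A_n}\to\Mod_{A_n}$ are conservative and commute with limits. Base change of derived algebras lifts base change of modules, and the limit $\Psi$ of algebras is computed on underlying modules. Therefore the adjunction $\DAlg_A \rightleftarrows \lim_n \DAlg_{A_n}$ has unit and counit computed on underlying modules, and Step 3 shows they are equivalences on $J$-complete objects. This identifies $\DAlg(\Spf(A))$ with $\DAlg_A^\jcomp$.
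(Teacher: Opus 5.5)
Your proposal takes the same route the paper relies on. The paper defers the proof to \cite{dalg_formal}, but the structure visible in its proof of Theorem~\ref{theorem: formal theta structures} is yours: the adjunction $\Mod_A\rightleftarrows\lim_n\Mod_{A_n}$ with right adjoint the limit, the composite $\Psi\Phi$ (the paper's $j_*j^*$) identified with $J$-completion, the counit an equivalence, and the $\DAlg$ statement deduced from the module statement because the forgetful functors are conservative and compatible with base change and limits.

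The step to make precise is the counit. Because you tensor termwise with $M_m$ over $A_m$, the fiber tower of $A_m\otimes_A A_n\to A_n$ must be pro-zero through $A_{m'}$-linear null maps, not merely on homotopy groups or in $\Mod_A$. You may pass to the Koszul tower, since both towers ind-present $\Spf(A)$ and so compute the same limit category. For that tower the $A_{m'}$-linear statement follows by base change from a polynomial ring: in one variable, the fiber of $\ZZ[t]/t^m\otimes_{\ZZ[t]}\ZZ[t]/t^n\to\ZZ[t]/t^n$ is $(\ZZ[t]/t^n)[1]$, with transition maps multiplication by $t$.
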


Our first goal is to package Theorem \ref{thm: qcoh of spf} into a theorem about $\Theta$-categories. 
To do so, we need to put a $\Theta$-structure on $\Mod_A^\jcomp$. This was implicitly done in \cite[§2.1]{dalg_formal}, but we make the $\Theta$-structure explicit here.

\begin{proposition}\label{prop: theta on modjcomp}
    There is a sifted-colimit-preserving monad $(\LSym_A)^\wedge_J$ acting on $\Mod_A^\jcomp$ so that $\DAlg_A^\jcomp$ is precisely the category of algebras over $(\LSym_A)^\wedge_J$, and the forgetful functor $\DAlg_A^\jcomp\to \CAlg_A^\jcomp:=\CAlg(\Mod_A^\jcomp)$ preserves all colimits.
\end{proposition}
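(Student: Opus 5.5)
The plan is to define the monad as the completed derived symmetric algebra monad. Let $(-)^\wedge_J\colon \Mod_A\to\Mod_A^\jcomp$ be the $J$-completion functor, left adjoint to the inclusion $\iota$. Set $(\LSym_A)^\wedge_J := (-)^\wedge_J\circ \LSym_A\circ \iota$. To see that this is a monad and to identify its algebras, I would prove the following key claim: the natural map $\LSym_A(M)\to \LSym_A(M^\wedge_J)$ becomes an equivalence after $J$-completion, for every $M\in\Mod_A$.

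\textbf{The key claim.} Given the claim, $J$-completion of derived algebras is a localization of $\DAlg_A$ compatible with $\LSym_A$, in the sense of \cite[Proposition 5.2.7.4]{HTT} and the monad-transfer results of \cite[§4.7]{HA}. The completion functor then lifts to a left adjoint $\DAlg_A\to\DAlg_A^\jcomp$, and $\DAlg_A^\jcomp\to\Mod_A^\jcomp$ is monadic with monad $(\LSym_A)^\wedge_J$. I will check conservativity and the Barr--Beck--Lurie hypotheses directly from the corresponding facts for $\DAlg_A\to\Mod_A$. For the claim itself, I would use that $\LSym_A$ is filtered by the functors $\LSym^n_A$, which are right-left extended from free modules. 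It therefore suffices to show that each $\LSym^n_A$ carries $J$-equivalences (maps whose cofiber is $J$-local, i.e.\ killed by $\otimes_A A/\!\!/J$) to $J$-equivalences. Since $J$-completeness is tested by $\otimes_A A/\!\!/J$ (Koszul complex on generators of $J$), and $\LSym^n$ commutes with base change, I would reduce to the statement that $\LSym^n$ of a map that is an equivalence mod $J$ is an equivalence mod $J$. That follows from base change along $A\to A/\!\!/J$, with some care because $A/\!\!/J$ is not animated. I would instead base change to $A\otimes_{\ZZ[x_1,\dots,x_r]}\ZZ$, which is animated. This step is where I expect the main obstacle: handling non-connective $M$, where $\LSym$ is only the right-left extension, and justifying commutation of completion with the colimit over $n$. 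This is presumably what \cite[§2.1]{dalg_formal} handles, and I would cite it.

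\textbf{Sifted colimits.} Sifted colimits in $\Mod_A^\jcomp$ are computed as completions of colimits in $\Mod_A$. By the key claim, $(\LSym_A)^\wedge_J$ applied to $\operatorname{colim}^\wedge M_i$ agrees with $(\operatorname{colim}\LSym_A M_i)^\wedge_J$, because $\LSym_A$ preserves sifted colimits. This is exactly the colimit in $\Mod_A^\jcomp$ of the $(\LSym_A)^\wedge_J(M_i)$.

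\textbf{Colimits in $\DAlg_A^\jcomp\to\CAlg_A^\jcomp$.} Colimits in $\DAlg_A^\jcomp$ are completions of colimits in $\DAlg_A$, and likewise for $\CAlg_A^\jcomp$; the $\CAlg$ case is the $\Einfty$ analogue of the key claim. The forgetful functor $\DAlg_A\to\CAlg_A$ preserves all colimits by Example \ref{ex: theta-category of Z-modules}, and both completions are computed on underlying modules. Hence the forgetful functor commutes with completion, and so preserves all colimits. Combined with the previous steps, this shows that $(\Mod_A^\jcomp,(\LSym_A)^\wedge_J)$ is a $\Theta$-category.
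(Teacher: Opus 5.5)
Your proposal is correct and takes the same route as the paper. The paper defines the monad as the $J$-completion of $\LSym_A$ and cites \cite[Proposition 2.16]{dalg_formal} for the monad structure and the identification of its algebras. It deduces sifted-colimit preservation from that of $\LSym_A$ together with the fact that colimits in $\Mod_A^\jcomp$ are completed colimits, and cites \cite[Proposition 2.13]{dalg_formal} for colimit preservation of $\DAlg_A^\jcomp\to\CAlg_A^\jcomp$; you prove this last point directly from $\DAlg_A\to\CAlg_A$ preserving colimits. Your explicit key claim, that $\LSym_A$ preserves $J$-equivalences, is what the paper's sifted-colimit sentence uses without comment.

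The obstacle you anticipate does not actually arise. Take $B$ to be the animated Koszul algebra. The identity $\LSym_A(M)\otimes_A B\simeq\LSym_B(M\otimes_A B)$ holds for all $M\in\Mod_A$, connective or not: both composites are left adjoint to the forgetful functor $\DAlg_B\to\Mod_A$, and $\DAlg\to\CAlg$ preserves pushouts. So no separate treatment of non-connective modules or of the sum over $n$ is needed.
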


\begin{proof}
    The monad $(\LSym_A)^\wedge_J$ is the completion of the monad $\LSym_A$ on $\Mod_A.$ That it is a monad and its algebras are precisely $\DAlg_A^\jcomp$ is \cite[Proposition 2.16]{dalg_formal}. Since $\LSym_A$ preserves sifted colimits and colimits in $\Mod_A^\jcomp$ are computed in $\Mod_A$ and then completed, the monad $(\LSym_A)^\wedge_J$ preserves sifted colimits. 
    By \cite[Proposition 2.13]{dalg_formal}, the functor $\DAlg_A^\jcomp\to \CAlg_A^\jcomp$ preserves all colimits.
\end{proof}

\begin{notation}
    Let $(\Mod_A^{\LSym})^\jcomp$ be the induced $\Theta$-structure on $\Mod_A^\jcomp$.
\end{notation}

\begin{remark}[Compatibility with $\Mod_A^{\LSym}$]\label{remark: compatibility with modl_A}
There is a natural induced morphism of $\Theta$-categories $\ModL_A\to (\ModL_A)^\jcomp$ which is given on the underlying categories in $\PrL$ by $J$-completion $(-)^\wedge_J\colon \Mod_A\to \Mod_A^\jcomp$ and by inclusion on the algebra categories  \cite[Construction 2.12]{dalg_formal}.

\end{remark}

\begin{theorem}\label{theorem: formal theta structures}
    There is a canonical equivalence of $\Theta$-categories
    $$\QCohL(\Spf(A))\simeq (\ModL_A)^\jcomp.$$
\end{theorem}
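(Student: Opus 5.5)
Using Lemma \ref{lemma: spf as ind-scheme}, I would write $\Spf(A) \simeq \ccolim_n \Spec(A_n)$. Since $\QCohL$ sends colimits of prestacks to limits in $\thetaCat$, this gives
\[ \QCohL(\Spf(A)) \simeq \lim_n \ModL_{A_n} \]
in $\thetaCat$. The task is therefore to produce a comparison $\Theta$-functor from $(\ModL_A)^\jcomp$ to this limit, and to show that it is an equivalence.

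\emph{Constructing the comparison functor.} For each $n$, the ring $A_n$ is $J$-nilpotent and perfect over $A$. Hence the base change $\Mod_A \to \Mod_{A_n}$ factors through $J$-completion: $A_n\otimes_A -$ inverts maps that become equivalences after $J$-completion, because $A_n$ is $J$-nilpotent and perfect. On the algebra side, $\DAlg_A^\jcomp \to \DAlg_{A_n}$, $B \mapsto A_n \otimes_A B$, is left adjoint to the forgetful functor $\DAlg_{A_n}\to\DAlg_A^\jcomp$. This forgetful functor is well defined because $A_n$-modules are $J$-nilpotent, hence $J$-complete. The commuting square with $\CAlg$ is left-adjointable, since both forgetful functors to $\CAlg$ commute with base change. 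By Remark \ref{remark: morphisms of theta cats}, this gives $\Theta$-functors $(\ModL_A)^\jcomp \to \ModL_{A_n}$ that are compatible in $n$. The cleanest way to produce them coherently is Proposition \ref{prop: module cat adjunction}: $\ModL_{A_n}$ is the module $\Theta$-category $\Mod_{A_n}((\ModL_A)^\jcomp)$, because $A_n\in \DAlg_A^\jcomp$ and $J$-complete $A_n$-modules are all $A_n$-modules. The map from the unit therefore gives a coherent family of $\Theta$-functors under $(\ModL_A)^\jcomp$, which assemble into a $\Theta$-functor
\[ \Phi\colon (\ModL_A)^\jcomp \to \lim_n \ModL_{A_n}. \]

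\emph{Checking that $\Phi$ is an equivalence.} By Lemma \ref{lemma: theta-categories have small limits}, the forgetful functor $\thetaCat \to \PrL$ is conservative and preserves limits. It therefore suffices to check that the underlying functor $\Mod_A^\jcomp \to \lim_n \Mod_{A_n}$ is an equivalence. This is Theorem \ref{thm: qcoh of spf}(1), once I verify that the equivalence there is the one induced by $M \mapsto (A_n\otimes_A M)_n$. As a consistency check, the map on derived algebras should then be the equivalence of Theorem \ref{thm: qcoh of spf}(2). Since $\thetaCat \to \PrL$ is conservative, however, this is automatic and need not be proved separately.

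\emph{Main obstacle.} I expect the hardest part to be the coherent construction of $\Phi$ as a map of $\Theta$-categories, not just of underlying categories: one needs the completed monad $(\LSym_A)^\wedge_J$ of Proposition \ref{prop: theta on modjcomp} to be compatible with every $\LSym_{A_n}$. My first approach is the module-category route above, which reduces everything to the single map of derived algebras $A^\wedge_J \to A_n$ in $\DAlg_A^\jcomp$, where $A^\wedge_J$ is the unit. A second approach is to factor the comparison through the $\Theta$-functor $\ModL_A\to(\ModL_A)^\jcomp$ of Remark \ref{remark: compatibility with modl_A}. This route requires knowing that $\ModL_A \to \ModL_{A_n}$ factors uniquely through it, which would follow if that functor were a localization in $\thetaCat$. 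I would prove this by checking that the completion functor is a Bousfield localization both on modules and on derived algebras, the latter by \cite[Construction 2.12]{dalg_formal}.
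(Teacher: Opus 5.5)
Your proposal is correct. Its final step, reducing via conservativity and limit-preservation of $\thetaCat\to\PrL$ to the underlying equivalence of \cite[Theorem 2.26]{dalg_formal}, is the same as the paper's. The difference lies in how the comparison $\Theta$-functor is produced.

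The paper does not invoke Proposition \ref{prop: module cat adjunction} over $(\ModL_A)^\jcomp$. It proceeds as follows:
\begin{enumerate}
\item It starts from the $\Theta$-functor $j^*\colon \ModL_A\to \QCohL(\Spf(A))=\lim_n \ModL_{A_n}$, which comes from functoriality of $\QCohL$ along $j\colon\Spf(A)\to\Spec(A)$.
\item It passes to the right-adjoint data: the left-adjointable squares $\lim_n\DAlg(A_n)\to\DAlg(A)$ over $\lim_n\CAlg(A_n)\to\CAlg(A)$ over $\lim_n\Mod_{A_n}\to\Mod_A$.
\item Because $j_*j^*$ is $J$-completion, these right adjoints land in the $J$-complete full subcategories. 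Factoring through those subcategories gives the $\Theta$-functor $(\ModL_A)^\jcomp\to\QCohL(\Spf(A))$. Left-adjointability is inherited, since the new left adjoints are restrictions of the old ones.
\end{enumerate}

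What the paper's route buys is that compatibility with the tower $\QCohL(\Spec(A_n))$ defining the right Kan extension is automatic. The underlying functor is also visibly the one in \cite[Theorem 2.26]{dalg_formal}. In your route you must instead (a) identify $\Mod_{A_n}((\ModL_A)^\jcomp)\simeq\ModL_{A_n}$ as $\Theta$-categories, (b) check that the resulting tower agrees with the one coming from $\QCohL$, for instance by comparing the two module-category functors via Proposition \ref{prop: module cat adjunction} over $\ModL_A$, and (c) match the underlying functors.

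The paper's argument also shows that your second approach does more than is needed. No universal property of $\ModL_A\to(\ModL_A)^\jcomp$ as a localization in $\thetaCat$ is required, only that the right adjoints factor through complete objects. Your first route, in exchange, gets coherence in $n$ for free from the fully faithful functor $\DAlg(\CC)\to\thetaCat_{\CC/}$ applied to the tower $(A_n)$ in $\DAlg_A^\jcomp$, and avoids passing to right adjoints.

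One small correction to your justification: $A_n$-modules are $J$-complete not because they are $J$-nilpotent. Locally $J$-torsion modules need not be complete; for example, $\mathbb{Q}_p/\mathbb{Z}_p$ is not derived $p$-complete. The correct reason is that each homotopy group is an $H^0(A_n)$-module, hence killed by a fixed power of $J$, and derived $J$-completeness is detected on homotopy groups.
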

\begin{proof}
    By Lemma \ref{lemma: spf as ind-scheme}, $\Spf(A) = \ccolim_n \Spec(A_n)$ where $A_n$ are certain $J$-nilpotent animated $A$-algebras.
    Since $\QCohL$ is defined by right Kan extension from affines, 
    \[
        \QCohL(\Spf(A))=\lim_n \QCohL(\Spec(A_n)) = \lim_n \Mod_{A_n}^{\LSym}.
    \]
    Pulling back along $j: \Spf(A) \to \Spec(A)$ gives a $\Theta$-functor 
    \[
        j^*: \Mod_A^{\LSym} \to \lim_n \Mod_{A_n}^{\LSym}.
    \]
    Passing to the right adjoint functor $j_*$, we have left-adjointable squares
    \[
\begin{tikzcd}[ampersand replacement=\&]
	{\lim_n \DAlg(A_n)} \& {\DAlg(A)} \\
	{\lim_n \CAlg(A_n)} \& {\CAlg(A)} \\
	{\lim_n \Mod_{A_n}} \& {\Mod_A}
	\arrow[from=1-1, to=1-2]
	\arrow[from=1-1, to=2-1]
	\arrow[from=1-2, to=2-2]
	\arrow[from=2-1, to=2-2]
	\arrow[from=2-1, to=3-1]
	\arrow[from=2-2, to=3-2]
	\arrow[from=3-1, to=3-2]
\end{tikzcd}
    \]
    The composite $j_*j^*: \Mod_A \to \lim_n \Mod_{A_n} \to \Mod_A$ is the projection onto $\Mod_A^{\jcomp}$ \cite[Theorem 2.26]{dalg_formal}.
    Since the diagram above commutes, the functor 
    $\lim_n\CAlg(A_n) \to \CAlg(A)$ factors uniquely through $\CAlg(A)^{\jcomp}$, and the functor
    $\lim_n \DAlg(A_n)\to \DAlg(A)$ factors uniquely through $\DAlg(A)^{\jcomp}$ over $\CAlg(A)^\jcomp$. 
    This gives a morphism of $\Theta$-categories $\Mod_A^{\jcomp} \to \QCohL(\Spf A)$.
    The underlying functor is an equivalence, whence it is an equivalence.
\end{proof}

\begin{lemma}\label{lemma: spf has affine diagonal}
    Let $A$ be an animated ring and $J \subseteq H^0(A)$ be a finitely generated ideal.
    Then $\Spf(A)$ has affine diagonal.
\end{lemma}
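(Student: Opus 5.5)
To prove that $\Spf(A)$ has affine diagonal, I would show directly that for any animated ring $B$ and two points $x,y\colon \Spec(B)\to \Spf(A)$, the fiber product $\Spec(B)\times_{\Spf(A)}\Spec(B)$ is affine. The key input is that $\Spf(A)\to\Spec(A)$ is a monomorphism. By Definition \ref{def: spf}, $\Spf(A)(C)$ is a $0$-full subspace of $\Maps_{\aring}(A,C)$, so $\Spf(A)\to\Spec(A)$ is $(-1)$-truncated objectwise; this is the content of Lemma \ref{lem: factorization is contractible}. For a monomorphism $\Spf(A)\to\Spec(A)$, the canonical map
\[ \Spf(A)\times_{\Spec(A)}\Spf(A)\to \Spf(A) \]
is an equivalence. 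Hence the diagonal of $\Spf(A)$ is identified with the base change of the diagonal of $\Spec(A)$:
\[ \Spf(A)\times\Spf(A) \times_{\Spec(A)\times\Spec(A)} \Spec(A) \simeq \Spf(A)\times_{\Spec(A)}\Spf(A)\simeq\Spf(A). \]
More concretely, for $x,y$ as above,
\[ \Spec(B)\times_{\Spf(A)}\Spec(B)\simeq \Spec(B)\times_{\Spec(A)}\Spec(B) = \Spec(B\otimes_A B). \]
Here the fiber product over $\Spf(A)$ agrees with the one over $\Spec(A)$ because $\Spf(A)\to\Spec(A)$ is a monomorphism: a path in $\Spf(A)(C)$ between the two images is the same as a path in $\Spec(A)(C)$, by $0$-fullness. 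The fiber product is therefore affine.

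The steps in order are as follows. First, recall that the diagonal is affine if its base change along every $\Spec(B)\to \Spf(A)\times\Spf(A)$ is an affine scheme. Second, verify the monomorphism claim pointwise on $C$-points using $0$-fullness. Third, conclude with the displayed identification, which expresses the fiber product as $\Spec(B\otimes_A B)$ (or as $\Spec(B)\times_{\Spec(A\otimes A)}\Spec(A)$ for the diagonal of $\Spec(A)$), noting that $\Spec(A)$, being affine, has affine diagonal.

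The only point needing care is the second step: that a $0$-full inclusion of spaces is $(-1)$-truncated, so that fiber products over $\Spf(A)$ and over $\Spec(A)$ coincide. This is immediate, since the homotopy fibers of an inclusion of connected components are empty or contractible, and limits are computed pointwise in prestacks. I do not expect a genuine obstacle here.
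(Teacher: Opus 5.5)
Your proposal is correct and is essentially the paper's proof: the paper also shows that the square with $\Spf(A)\to\Spf(A)\times\Spf(A)$ over the diagonal of $\Spec(A)$ is Cartesian and then uses that the diagonal of $\Spec(A)$ is affine. The only difference is that the paper checks Cartesianness directly on $R$-points (the image of $J\otimes H^0(A)+H^0(A)\otimes J$ under multiplication is $J$), whereas you deduce it from $\Spf(A)\to\Spec(A)$ being a monomorphism; note also that the base change of the diagonal along $(x,y)\colon\Spec(B)\to\Spf(A)\times\Spf(A)$ is $\Spec(B)\times_{\Spec(A\otimes A)}\Spec(A)$ rather than $\Spec(B)\times_{\Spf(A)}\Spec(B)$, which is harmless because your Cartesian-square argument covers both.
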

\begin{proof}
    The square 
    \[
\begin{tikzcd}[ampersand replacement=\&,cramped]
	{\Spf(A)} \& {\Spf(A) \times\Spf(A)} \\
	{\Spec(A)} \& {\Spec(A) \times\Spec(A)}
	\arrow[from=1-1, to=1-2]
	\arrow[from=1-1, to=2-1]
	\arrow[from=1-2, to=2-2]
	\arrow[from=2-1, to=2-2]
\end{tikzcd}
\]
is Cartesian.
For an $R$-point of the pullback is the full subfunctor of $\Spec(A)$ on points $A \to R$ such that $J \otimes H^0(A) + H^0(A) \otimes J$ has nilpotent image under $A \otimes A \to A \to R$. But the image of $J \otimes H^0(A) + H^0(A) \otimes J$ under $A \otimes A \to A$ is exactly $J$, so a point of the pullback is exactly a point of $\Spec(A)$ such that the image of $J$ in $H^0(R)$ is nilpotent, that is, a point of $\Spf(A)$.

The claim follows since $\Spec(A) \to \Spec(A) \times\Spec(A)$ is affine.
\end{proof}

As a consequence, we have the following theorem for formal stacks, which applies to all quotients of affine formal schemes by affine formal group schemes.

\begin{theorem}\label{theorem: main thm formal stacks}
    Let $X\to \Spf(A)$ be a relative geometric stack and let $Y\to \Spf(A)$ be any morphism of prestacks. Then $$F\in \Fun_\Theta^{(\Mod_A^{\LSym})^\jcomp}(\QCohL(X),\QCohL(Y))$$ is geometric if and only if it preserves flat objects and connective objects.
\end{theorem}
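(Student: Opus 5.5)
The plan is to deduce the statement directly from Theorem \ref{theorem: essential image for relative geometric stack}, applied with base $Z = \Spf(A)$. Three inputs are needed: $Z$ is an fpqc sheaf, $Z$ has quasi-affine diagonal, and the relative category $\Fun_\Theta^{\QCohL(\Spf(A))}$ agrees with $\Fun_\Theta^{(\Mod_A^{\LSym})^\jcomp}$.

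The diagonal is handled by Lemma \ref{lemma: spf has affine diagonal}, which shows that $\Spf(A)$ has affine, hence quasi-affine, diagonal. The base identification is Theorem \ref{theorem: formal theta structures}. It gives a canonical equivalence of $\Theta$-categories $\QCohL(\Spf(A)) \simeq (\ModL_A)^\jcomp$. Transporting the structure maps $\QCohL(\Spf A) \to \QCohL(X)$ and $\QCohL(\Spf A)\to \QCohL(Y)$ along this equivalence therefore identifies the two under-categories of $\thetaCat$. It also identifies the geometric $\Theta$-functors on both sides.

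The remaining hypothesis is that $\Spf(A)$ is an fpqc sheaf. Since $\Spf(A)\subset \Spec(A)$ is a $0$-full subfunctor and $\Spec(A)$ is an fpqc sheaf, it suffices to check one thing. Given an fpqc cover $B \to B'$ and a map $A \to B$ whose composite to $B'$ kills a power of $J$ on $H^0$, the map $A \to B$ itself kills a power of $J$. This holds because $H^0(B) \to H^0(B')$ is faithfully flat, hence injective. So if $J^n$ maps to zero in $H^0(B')$, it already maps to zero in $H^0(B)$. Here one needs $H^0(B') = H^0(B)\otimes_{H^0(B)}\cdots$, which follows from flatness: for flat $B\to B'$, $H^0(B')\simeq H^0(B)\otimes_{H^0(B)}H^0(B')$ is faithfully flat over $H^0(B)$. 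Descent of the $0$-full condition along Čech nerves then follows, because membership in a union of components is detected on a cover.

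I expect the fpqc sheaf verification to be the only real point, and it is mild. If one prefers to avoid it, the remark after Definition \ref{defn: relative qg stack} says the proofs work when the base is merely a prestack, provided $X$ is an fpqc sheaf over it. With these in place, Theorem \ref{theorem: essential image for relative geometric stack} gives the equivalence of (1) geometric and (2) preserving connective and flat objects.
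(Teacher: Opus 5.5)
Your proposal is correct and follows the paper's proof: apply Theorem~\ref{theorem: essential image for relative geometric stack} with $Z=\Spf(A)$. You use Lemma~\ref{lemma: spf has affine diagonal} for the diagonal and Theorem~\ref{theorem: formal theta structures} to identify $\QCohL(\Spf(A))$ with $(\ModL_A)^{\jcomp}$, as the paper does. Your extra check that $\Spf(A)$ is an fpqc sheaf (via injectivity of $H^0(B)\to H^0(B')$ for faithfully flat $B\to B'$) is correct, though the paper leaves it implicit, since a relative geometric stack is by definition a morphism of fpqc sheaves.
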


\begin{proof}
Since $\Spf(A)$ has an affine diagonal by Lemma \ref{lemma: spf has affine diagonal}, to apply Theorem \ref{theorem: essential image for relative geometric stack}, we only need to check that the structure morphism $X\to \Spf(A)$ induces an action of $(\Mod_A^{\LSym})^{\jcomp}$ on $\QCohL(X).$ This follows immediately from Theorem \ref{theorem: formal theta structures}.
\end{proof}

When $X \to \Spf(A)$ has compactly generated derived category, Theorem \ref{theorem: relative compactly generated} gives an improvement to  Theorem \ref{theorem: main thm formal stacks}.

\begin{theorem}\label{theorem: formal compactly generated}
    Let $X\to \Spf(A)$ be a relative quasi-geometric stack such that $\QCoh(X)$ is compactly generated and let $Y \to \Spf(A)$ be a morphism of prestacks. Then
    $$F\in \Fun_\Theta^{(\Mod_A^{\LSym})^\jcomp}(\QCohL(X),\QCohL(Y))$$ 
    is geometric if and only if $F$ preserves connective objects.
\end{theorem}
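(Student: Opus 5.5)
This is a direct application of Theorem \ref{theorem: relative compactly generated} with base $Z = \Spf(A)$, in the same way that Theorem \ref{theorem: main thm formal stacks} was deduced from Theorem \ref{theorem: essential image for relative geometric stack}. The plan is to verify the three hypotheses of that theorem, and then to identify the category of $\Theta$-functors under $\QCohL(\Spf(A))$ with the one under $(\Mod_A^{\LSym})^\jcomp$.

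First, $Z = \Spf(A)$ must be an fpqc sheaf with affine diagonal. The affine diagonal is Lemma \ref{lemma: spf has affine diagonal}. For the sheaf property, I would argue as follows.
\begin{itemize}
\item $\Spf(A)$ is a $0$-full subfunctor of $\Spec(A)$, and $\Spec(A)$ is an fpqc sheaf.
\item The condition that a map $A \to B$ annihilates a power of $J$ concerns only the image of the finitely generated ideal $J$ in $H^0(B)$.
\item Nilpotence of finitely many elements can be checked after a faithfully flat extension, since $H^0(B) \to H^0(B')$ is injective when $B \to B'$ is faithfully flat.
\item For a cover in which the indexing of powers might vary, note that only finitely many generators occur and fpqc covers are refined by finite families. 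Hence a uniform power exists.
\end{itemize}
It follows that $\Spf(A)$ is an fpqc sheaf.

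Second, the hypotheses on $X$ and $Y$ are exactly those required: $X \to \Spf(A)$ is a relative quasi-geometric stack with $\QCoh(X)$ compactly generated, and $Y \to \Spf(A)$ is an arbitrary morphism of prestacks. Theorem \ref{theorem: relative compactly generated} then says that $F \in \Fun_\Theta^{\QCohL(\Spf A)}(\QCohL(X),\QCohL(Y))$ is geometric if and only if it preserves connective objects.

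Third, Theorem \ref{theorem: formal theta structures} gives the equivalence $\QCohL(\Spf(A)) \simeq (\ModL_A)^\jcomp$. This identifies $\thetaCat_{\QCohL(\Spf A)/}$ with $\thetaCat_{(\ModL_A)^\jcomp/}$, so the two categories of $\Theta$-functors agree. Since "geometric" is defined in terms of this structure map, the statement transfers.

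The only step I expect to require real care is the fpqc sheaf property of $\Spf(A)$, because it is not stated earlier in the paper. If the direct argument above becomes delicate, I would fall back on the remark following Definition \ref{defn: relative qg stack}: the proofs go through when the base is only a prestack, provided $X$ is an fpqc sheaf over it. Alternatively, I would use the ind-presentation of Lemma \ref{lemma: spf as ind-scheme} together with the observation that the relevant reductions in Lemma \ref{lemma: check geometric after all base changes} only base change along affines $T \to \Spf(A)$, which factor through some $\Spec(A_n)$.
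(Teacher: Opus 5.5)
Your proposal is correct and follows essentially the same route as the paper, which presents this result as an immediate consequence of Theorem~\ref{theorem: relative compactly generated} with $Z=\Spf(A)$. That application uses Lemma~\ref{lemma: spf has affine diagonal} for the affine diagonal and Theorem~\ref{theorem: formal theta structures} to identify $\QCohL(\Spf(A))$ with $(\Mod_A^{\LSym})^\jcomp$. Your explicit check that $\Spf(A)$ is an fpqc sheaf fills in a point the paper leaves implicit: membership in a $0$-full subfunctor is detected along a faithfully flat map because $H^0(B)\to H^0(B')$ is injective, and finite products are handled by taking the maximum exponent.
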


\begin{example}
Let $X\to \Spf(A)$ be a formal stack. While it is almost never true that $X$ is perfect in the sense of \cite{BFN10}, i.e., $\QCoh(X)$ is almost never $\mathrm{Ind}(\mathrm{Perf}(X))$\footnote{Compact objects on a formal stack are supported on finite schematic thickenings. For example, $\ZZ_p$ is not compact in $\QCoh(\Spf(\ZZ_p)).$}, it is often true that $\QCoh(X)$ is compactly generated for classical stacks such as $X=\mathbb{A}^1/\mathbb{G}_m\times \Spf(A)$. 
\end{example}

\begin{example}
    In a related direction, if $(A,I)$ is a prism in the sense of \cite{BS19}, then the relative prismatization of any $p$-complete animated $A/I$-algebra $R$ is a $(p,I)$-adic formal stack $(R/A)^{\Prism}$ over $\Spf(A)$ endowed with its $(p,I)$-adic topology. When $\Omega^1_{R/(A/I)}/p$ is finitely generated, a result of Bhatt \cite[Corollary 4.21]{MM25} identifies $\QCoh((R/A)^\Prism)$ with $(p,I)$-complete modules over the prismatic cohomology ring $\Prism_{R/A}$. The latter category is compactly generated by
    \cite[Proposition 7.3.1.7]{Lurie-SAG} and \cite[Proposition 7.1.1.12]{Lurie-SAG}. Further, if $\Spf(R)$ admits an \'etale map to $\mathbb{A}^n$\footnote{This always happens Zariski locally when $\Spf(R)$ is smooth over $\Spf(A/I).$ }, 
    then $(R/A)^\Prism$ admits a faithfully flat atlas by an affine formal $A$-scheme \cite[Footnote 36]{Bhatt22}, so $(R/A)^\Prism \to \Spf(A)$ is a relative geometric stack.
\end{example}

\begin{remark}
The symmetric monoidal category $\Mod_{A}^\jcomp$ is an idempotent algebra over $\Mod_A$ in $\CAlg(\PrL)$\footnote{This can be proved using the techniques of \cite[Remark A.8]{AKN23}.}. In particular, a $\Mod_A$-algebra $\CC$ is linear over $\Mod_A^{\jcomp}$ if and only if $\Mod_A^{\jloc} \otimes \CC = 0$.
Similarly, one can show that a $\Theta$-functor $F: \Mod_A^{\LSym} \to \CC$ factors through $(\Mod_A^{\LSym})^{\jcomp}$ if and only if $F(\Mod_A^{\jloc}) = 0$, and such factorizations are unique up to contractible ambiguity.
It follows that the induced functor 
\[
\thetaCat_{\Mod_A^\jcomp/}\to \thetaCat_{\Mod_A/}
\]
is fully faithful.
\end{remark}

\printbibliography

\end{document}